\definecolor{darkgreen}{cmyk}{1.,0.,1.,0.2}
\newcommand{\vectornorm}[1]{\left|\left|#1\right|\right|}
\newcommand{\rr}[0]{\mathbb{R}}
\newcommand{\zz}[0]{\mathbb{Z}}
\newcommand{\const}[0]{\text{const}}
\newcommand{\cover}[1]{\xRightarrow{\protect\mathmakebox[1.7em]{#1}}}
\newcommand{\longlongcover}[1]{\xRightarrow{\protect\mathmakebox[3.9em]{#1}}}
\newcommand{\backcover}[1]{\xLeftarrow{\mathmakebox[1.7em]{#1}}}
\newcommand{\gencover}[1]{\xLeftrightarrow{\mathmakebox[1.7em]{#1}}}
\newcommand{\longgencover}[1]{\xLeftrightarrow{\mathmakebox[2.3em]{#1}}}
\DeclareMathOperator{\sgn}{sgn}
\DeclareMathOperator{\inter}{int}
\DeclareMathOperator{\id}{id}
\DeclareMathOperator{\diam}{diam}
\DeclareMathOperator{\conv}{conv}
\newtheorem{thm}{Theorem}[section]
\newtheorem{cor}[thm]{Corollary}
\newtheorem{lem}[thm]{Lemma}
\newtheorem{prop}[thm]{Proposition}
\theoremstyle{remark}
\newtheorem{rem}[thm]{Remark}
\theoremstyle{definition}
\newtheorem{defn}[thm]{Definition}
\theoremstyle{remark}
\title[Periodic solutions of the FHN equations]{Existence of Periodic Solutions of the FitzHugh-Nagumo Equations for An Explicit Range of the Small Parameter$^*$}
\begin{document}

\author[A. Czechowski]{Aleksander Czechowski$^\dagger$}
\author[P. Zgliczy\'nski]{Piotr Zgliczy\'nski$^\dagger$}
\thanks{$^*$AC was supported by the Foundation for Polish Science under the MPD Programme ‘‘Geometry and Topology in Physical Models’’,
  co-financed by the EU European Regional Development Fund, Operational Program Innovative Economy 2007-2013. PZ was supported by Polish
  National Science Centre grant 2011/03B/ST1/04780.
}

\thanks{$^\dagger$Institute of Computer Science and Computational Mathematics, Jagiellonian University, \L ojasiewicza 6, 30-348 Krak\'ow, Poland
  (\href{mailto:czechows@ii.uj.edu.pl}{czechows@ii.uj.edu.pl}, \href{mailto:zgliczyn@ii.uj.edu.pl}{zgliczyn@ii.uj.edu.pl}).}

\maketitle

\begin{footnotesize}

\textrm{\textbf{Abstract.} 
The FitzHugh-Nagumo model describing propagation of nerve impulses in axon 
is given by fast-slow reaction-diffusion equations, with dependence on a parameter $\epsilon$ representing the ratio of time scales.
It is well known that for all sufficiently small $\epsilon>0$ the system possesses a periodic traveling wave.
With aid of computer-assisted rigorous computations, we
prove the existence of this periodic orbit in the traveling wave equation for an explicit range $\epsilon \in (0, 0.0015]$.
Our approach is based on a novel method of combination of topological techniques of covering relations and isolating segments,
for which we provide a self-contained theory.
We show that the range of existence is wide enough, so the upper bound can be reached by standard validated continuation procedures.
In particular, for the range $\epsilon \in [1.5 \times 10^{-4}, 0.0015]$ we perform a rigorous continuation
based on covering relations and not specifically tailored to the fast-slow setting.
Moreover, we confirm that for $\epsilon=0.0015$ the classical interval Newton-Moore method applied to a sequence of Poincar\'e maps already succeeds.
Techniques described in this paper can be adapted to other fast-slow systems of similar structure.}

\bigskip

\textrm{\textbf{Key words.} fast-slow system, periodic orbits, rigorous numerics, FitzHugh-Nagumo model, isolating segments, covering relations}

\textrm{ \textbf{AMS subject classifications.} 34C25, 34E13, 65G20}
\end{footnotesize}

\section{Introduction}

\subsection{The FitzHugh-Nagumo equations}
The FitzHugh-Nagumo model with diffusion
\begin{equation}\label{RDiff}
  \begin{aligned}
    \frac{\partial u}{\partial \tau} &= \frac{1}{\gamma} \frac{\partial^{2} u}{\partial x^{2}} + u(u-a)(1-u) - w, \\
    \frac{\partial w}{\partial \tau} &= \epsilon (u - w),
 \end{aligned}
\end{equation}
was introduced as a simplification of the Hodgkin-Huxley model
for the nerve impulse propagation in nerve axons~\cite{FitzHugh, Nagumo}.
The variable $u$ represents the axon membrane potential and $w$ a slow negative feedback.
Traveling wave solutions of~\eqref{RDiff} are of particular interest as they resemble an actual motion of the nerve impulse~\cite{Hastings}.
By plugging the traveling wave ansatz $(u,w)(\tau ,x) = (u,w)(x+\theta \tau) = (u,w)(t)$, $\theta > 0$
and rewriting the system as a set of first order equations we arrive at an ODE
\begin{equation}\label{FhnOde}
     \begin{aligned}
       u'&=v, \\
       v'&=\gamma(\theta v - u(u-a)(1-u) + w), \\
       w'&= \frac{\epsilon}{\theta} (u - w).
     \end{aligned}
\end{equation}
to which we will refer to as the \emph{FitzHugh-Nagumo system} or the \emph{FitzHugh-Nagumo equations}.
The FitzHugh-Nagumo system is a fast-slow system with two fast variables $u,v$ and one slow variable $w$. 
The parameter $\theta$ represents the wave speed and $\epsilon$ is the small parameter, so $0 < \epsilon \ll 1$.
To focus our attention, following \cite{ArioliKoch, Champneys, GuckenheimerKuehn} we set the two remaining parameters to
\begin{align}\label{eq:parameters}
  a := 0.1, \qquad  \gamma := 0.2,
\end{align}
throughout the rest of the paper.

Bounded solutions of~\eqref{FhnOde} yielding different wave profiles have been studied by many authors both rigorously and numerically, see
\cite{Conley, Carpenter, Hastings2,
Hastings3, Maginu, Mischaikow, Kopell, Ambrosi, ArioliKoch, Jones, Yanagida, Nagumo, Champneys, GuckenheimerKuehn} and references given there.
Periodic orbits leading to \emph{periodic wave trains} exist for an open range of $\theta$'s and were treated in \cite{Conley,Hastings2,Hastings3,Maginu,Mischaikow, ArioliKoch};
\emph{traveling pulses} generated by homoclinic orbits exist for two isolated values of $\theta$, their existence was proved in \cite{Carpenter,Hastings3, Kopell, Ambrosi, ArioliKoch}.
Stability of waves was discussed in \cite{Jones,Maginu,Yanagida, ArioliKoch}.
Proofs of existence use various methods, but most share the same perturbative theme\footnote{In~\cite{Ambrosi, ArioliKoch} the authors
use non-perturbative computer-assisted methods for a single value $\epsilon=0.01$ where the system becomes a regular, although stiff ODE.
It is to be noted that in~\cite{ArioliKoch} authors prove stability of the wave for this particular parameter.}.
We outline it below, for the periodic orbit.

\begin{figure}[t]
  \centering
  \begin{subfigure}{0.48\textwidth}
    \centering
    \begin{overpic}[width=0.7\textwidth, clip=true, trim = 60mm 20mm 100mm 0mm]{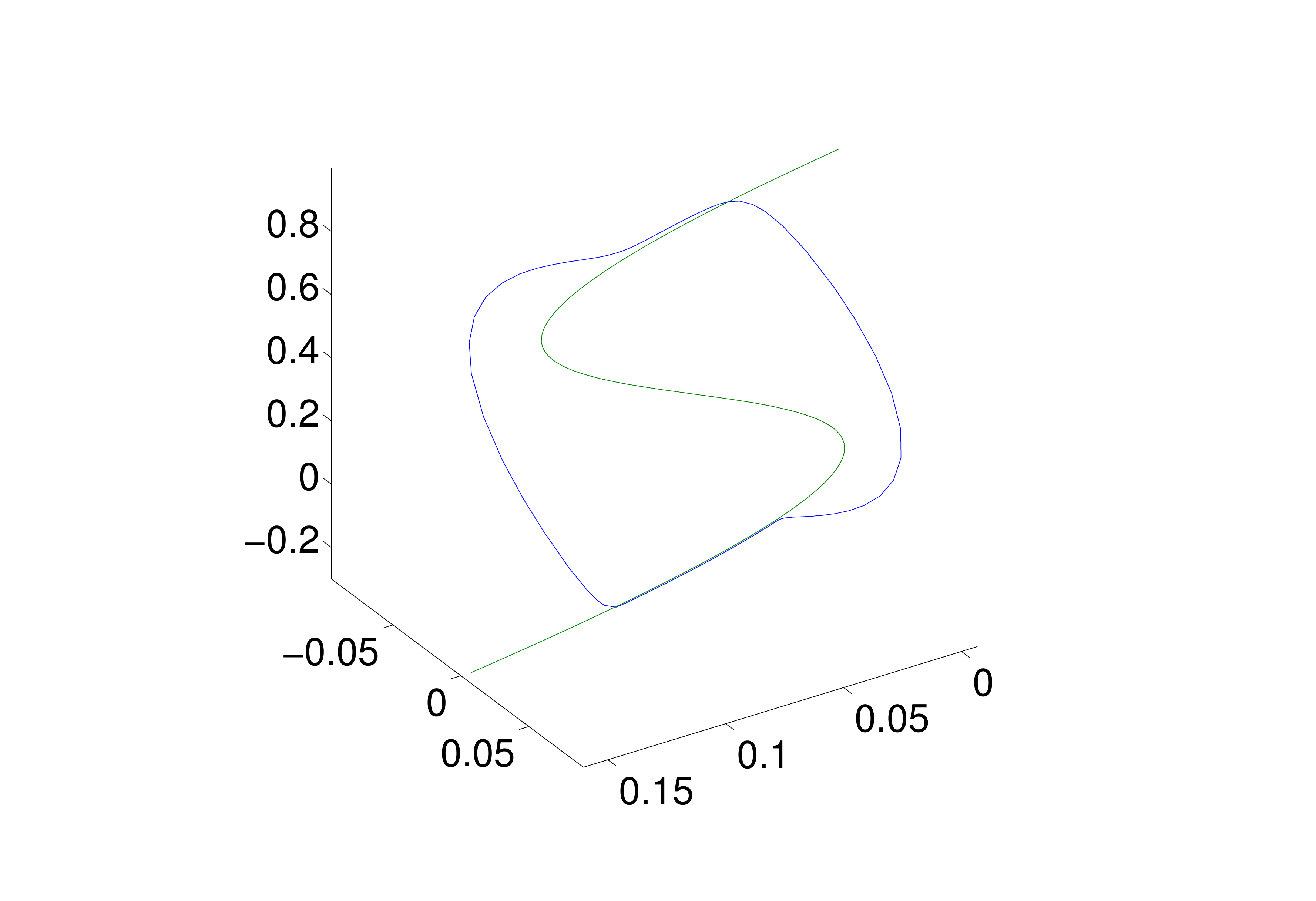}
      \put (17,52) {\scriptsize{$u$}}
      \put (17,9) {\scriptsize{$v$}}
      \put (71,6){\scriptsize{$w$}}
    \end{overpic}
    \caption{Approximate periodic orbit for $\epsilon=0.001$ in blue.}
  \end{subfigure} 
  \begin{subfigure}{0.48\textwidth}
    \centering
    \begin{overpic}[width=0.78\textwidth, clip=true, trim = 25mm 0mm 70mm 10mm]{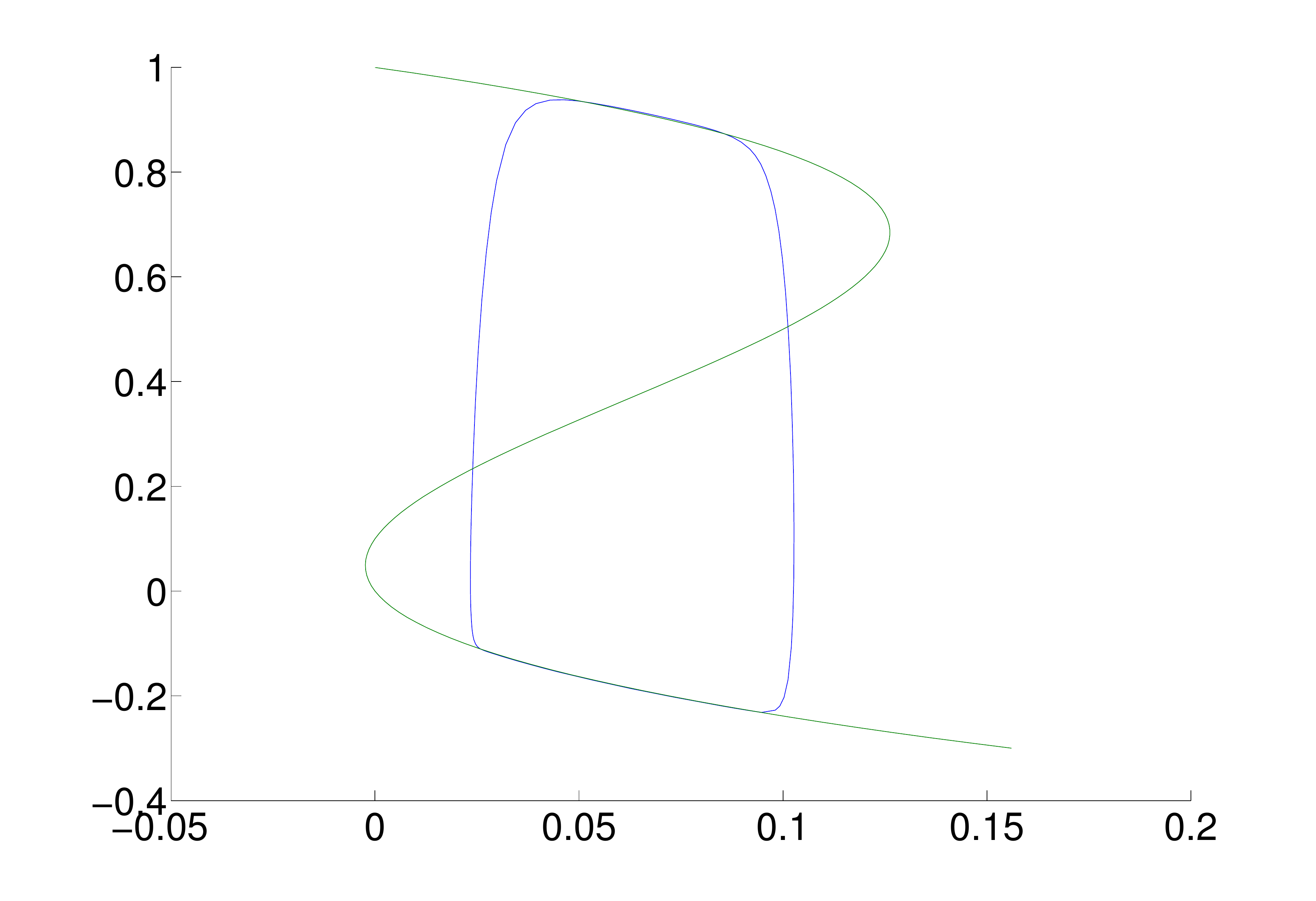}
      \put (11,49) {\scriptsize{$u$}}
      \put (53, 0) {\scriptsize{$w$}}
    \end{overpic}
    \caption{Projection onto $(u,w)$ plane.}
  \end{subfigure}
  \\
  \begin{subfigure}{0.45\textwidth}
    \centering
      \begin{tikzpicture}[line cap=round,line join=round,>=latex,x=4.5mm,y=4.5mm]
 
  \draw[color=black] (-0.,0.) -- (10,0.);
  \foreach \x in {0,...,10}
  \draw[shift={(\x,0)},color=black] (0pt,1pt) -- (0pt,-1pt) node[below] {};

  \draw[color=black] (0.,0.) -- (0.,10);
  \foreach \y in {0,...,10}
  \draw[shift={(0,\y)},color=black] (2pt,0pt) -- (-1pt,0pt) node[left] {};
 

  \clip(0,-1) rectangle (10,10);

  \draw [color=darkgreen,semithick,domain=0.:10.] plot(\x,{8});
  \draw [color=darkgreen,semithick,domain=0.:10.] plot(\x,{2});

  \draw [->,semithick,dash pattern=on 5pt off 5pt,color=blue] (2.,2.) -- (2.,8.);
  \draw [->,semithick,dash pattern=on 5pt off 5pt,color=blue] (8.,8.) -- (8.,2.);

  \draw [->,color=blue] (4.99,8.) -- (5.01,8.);
  \draw [->,color=blue] (5.01,2.) -- (4.99,2.);

  \draw (9.,8.) node[anchor=north] {\scriptsize{$\Lambda_u(w)$}};
  \draw (9.,2.) node[anchor=north] {\scriptsize{$\Lambda_d(w)$}};

  \draw (2.,5.) node[anchor=west] {\scriptsize{$w_{*}$}};
  \draw (8.,5.) node[anchor=west] {\scriptsize{$w^{*}$}};

  \draw (5.,0.) node[anchor=north] {\scriptsize{$w$}};
  \draw (0.,5.) node[anchor=west] {\scriptsize{$u,v$}};

\end{tikzpicture}
      \caption{A schematic drawing of the singular periodic orbit.}
  \end{subfigure}
  \caption{A numerical approximation of the periodic orbit close to the singular orbit, the slow manifold in green.}
  \label{orbitFig}
\end{figure}

Consider the limit equation at $\epsilon = 0$. There, the velocity of $w$
is zero and the phase space can be fibrated into a family of two-dimensional \emph{fast subsystems} parameterized by $v$.
These subsystems serve as a good approximation to the system with $\epsilon > 0$ small, except for regions of phase
space near the Z-shaped \emph{slow manifold}
\begin{equation}
  C_{0} = \{(u,v,w):\ v=0,\ w = u(u-0.1)(1-u) \},
\end{equation}
where the velocities of fast variables become small and the slow variable takes over.
There, one can analyze the \emph{slow flow} given by evolving $w$ while keeping the restriction $(u,v) \in C_{0}$, obtaining a differential algebraic equation.
Note that the slow manifold is formed by fixed points of the fast subsystems. For a range of $w$ it
has exactly three branches - by looking from a perspective of the respective $u$ values the lower and the upper one are formed by saddles,
and the middle one is formed by sources. We denote the upper/lower branches of saddles by $\Lambda_u(w)$ and $\Lambda_d(w)$, respectively.
For exactly two values $w \in \{w_{*}, w^{*} \}$, with $w_{*} < w^{*}$ there are heteroclinic connections from $\Lambda_d(w_{*})$ to $\Lambda_u(w_{*})$
and from $\Lambda_u(w^{*})$ to $\Lambda_d(w^{*})$.
To see that it is best to regard the fast subsystem as a Hamiltonian system with added negative friction and shoot with $w$,
a detailed description is given in~\cite{Conley}.
It happens that in the range $[w_{*},w^{*}]$ the slow flow on the branch $\Lambda_u$ is monotonically decreasing
and on the branch $\Lambda_d$ monotonically increasing, so by connecting the heteroclinics with pieces of the slow manifold
one assembles the \emph{singular periodic orbit}, see Figure~\ref{orbitFig}.
The proof of existence of an actual periodic orbit goes by perturbing to $\epsilon>0$ small and using
certain arguments based on topological methods~\cite{Conley,Carpenter,Mischaikow} or Fenichel theory and differential forms~\cite{Kopell, JonesBook}.

Using methods described above proofs of existence have been given for $\epsilon \in (0,\epsilon_0]$, $\epsilon_0$ ``small enough''.
With aid of computer we are able to improve these results.
We give an \emph{explicit} $\epsilon_{0}$ such that a periodic orbit of~\eqref{FhnOde} exists for the parameter range $\epsilon \in (0,\epsilon_{0}]$.
Our secondary objective is to make $\epsilon_{0}$ as large as possible, so there is no doubt that for $\epsilon \geq \epsilon_{0}$ one can perform
further continuation using well-established computer-aided methods such as the interval Newton-Moore method~\cite{Alefeld, Neumaier, Moore}
applied to a sequence of Poincar\'e maps.
The main results of this paper are:
\begin{thm}\label{thm:main1}
  For each $\epsilon \in (0,1.5 \times 10^{-4}]$, for $\theta = 0.61$ and other parameter values given in~\eqref{eq:parameters} there exists a periodic orbit of~\eqref{FhnOde}.
\end{thm}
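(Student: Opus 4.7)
The plan is to track the singular periodic orbit described in Section~1.1 into the full three-dimensional system for all $\epsilon$ in the stated range, by working on two scales simultaneously. With $\theta=0.61$ fixed, decompose the singular orbit into four arcs: the fast heteroclinic jump from $\Lambda_d(w_\ast)$ to $\Lambda_u(w_\ast)$, the slow descent along $\Lambda_u$ from $w^\ast$ to $w_\ast$, the fast heteroclinic jump from $\Lambda_u(w^\ast)$ to $\Lambda_d(w^\ast)$, and the slow ascent along $\Lambda_d$ from $w_\ast$ to $w^\ast$. I would place four two-dimensional sections transverse to the singular orbit at the gluing points between consecutive arcs and aim to close the full orbit by showing that the composition of the four section-to-section maps admits a fixed point, detected by the combined covering-relation / isolating-segment machinery announced in the abstract.

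On the two fast arcs the transit time is uniformly bounded as $\epsilon\to 0$, so I would surround the starting point with a thin h-set, propagate it with a rigorous $C^0$ integrator (e.g.\ a Lohner-type scheme), and verify a covering relation onto a matching h-set on the next section. The only $\epsilon$-dependence in these enclosures is the slow drift of $w$ over a finite time, which is controlled uniformly by the length of the interval $(0,1.5\times 10^{-4}]$. Concretely, one performs the verification once using interval arithmetic with $\epsilon$ kept as an interval parameter, so that the same covering certificate is produced for all $\epsilon$ in the range.

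On the two slow arcs the transit time is of order $1/\epsilon$, so direct rigorous integration is hopeless and the isolating-segment technique must take over. Around each branch $\Lambda_u$, $\Lambda_d$ I would construct a polygonal segment that is a thin tube transverse to the branch, with entrance face at the section where the preceding fast jump lands and exit face at the section where the next jump starts. The verification splits into two parts: on the lateral faces the vector field must point inward, which reduces to a quantitative statement of normal hyperbolicity of the slow manifold, holding uniformly in $\epsilon$; on the top and bottom faces the sign of the $w$-component of the vector field must match the prescribed direction of slow drift, which is automatic because $(u-w)$ keeps its sign on each branch throughout the relevant range of $w$.

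Once both kinds of building blocks are certified, the four segments and the two covering relations concatenate into a single closed topological object; the general fixed-point result coming from the combined theory then delivers a genuine periodic orbit of~\eqref{FhnOde} for every $\epsilon$ in the range. The main obstacle I anticipate is the two fold regions near $w_\ast$ and $w^\ast$, where normal hyperbolicity of $\Lambda_u$, $\Lambda_d$ degenerates, the natural cone conditions used to close the sides of an isolating segment collapse, and the fast and slow scales cease to be cleanly separated. Handling them presumably requires intermediate sections placed at a definite, carefully tuned distance from the fold, together with quantitative estimates — uniform in $\epsilon\in(0,1.5\times 10^{-4}]$ — that patch the isolating-segment regime on the branches onto the covering-relation regime on the jumps without leaving a gap.
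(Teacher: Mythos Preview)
Your overall architecture matches the paper's: covering relations via rigorous integration on the fast arcs (with $\epsilon$ carried as an interval parameter), isolating segments along the slow branches, and a topological fixed-point argument to close the loop. You also correctly locate the crux at the four corner points. However, two specific points deserve correction.

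First, a small but telling slip: an isolating segment does \emph{not} have all lateral faces pointing inward. The segment has one exit direction (vector field points \emph{outward} on those faces) and one entry direction (inward), in addition to the monotone central direction. This saddle-like structure is precisely what allows the front face to cover the rear face and is essential for the argument. Relatedly, your slow-flow directions are reversed: since $w'=\tfrac{\epsilon}{\theta}(u-w)$, on $\Lambda_u$ one has $u>w$ so $w$ \emph{increases} from $w_*$ to $w^*$, and on $\Lambda_d$ it decreases.

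Second, and more substantively, your diagnosis of the corner difficulty is off. There are no fold points near $w_*$ or $w^*$; the branches $\Lambda_u,\Lambda_d$ remain uniformly normally hyperbolic (saddle type) throughout the relevant $w$-range. The real issue is a \emph{change of roles among directions}: along the slow branch the exit direction of the h-set is the fast unstable variable and $w$ is central, whereas the covering along the fast heteroclinic is forced by transversality in the $w$-parameter (condition (P2)), so $w$ must become the exit direction of the h-set entering the fast regime. The paper handles this by placing short ``corner'' isolating segments at $\Gamma_{dl},\Gamma_{ul},\Gamma_{ur},\Gamma_{dr}$ and invoking a dedicated ``fast--slow switch'' lemma (Theorems~\ref{is:3},~\ref{is:4}): the exit map of such a segment gives a covering from its front face onto a lateral (exit or entrance) face, on which the central $w$-direction has been promoted to the new exit (resp.\ entry) coordinate. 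This is the topological analogue of the Exchange Lemma and is the missing ingredient in your outline; without it there is no mechanism to convert the slow monotone direction into the shooting direction required for the fast covering. The intermediate sections you anticipate do appear in the paper, but for a purely numerical reason (the integrator's transversality check fails too close to the slow manifold), not because hyperbolicity degenerates.
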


\begin{thm}\label{thm:main2}
  For each $\epsilon \in [1.5 \times 10^{-4},0.0015]$, for $\theta = 0.61$ and other parameter values given in~\eqref{eq:parameters} there exists a periodic orbit of~\eqref{FhnOde}.
\end{thm}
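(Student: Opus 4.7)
The plan is to perform a rigorous computer-assisted continuation in $\epsilon$ using topological covering relations as the existence-producing tool, without invoking the isolating-segment machinery needed for the smaller-$\epsilon$ regime of Theorem~\ref{thm:main1}. In this range $\epsilon$ is still large enough that the full periodic orbit can be enclosed in h-sets of moderate diameter and that a standard rigorous $C^1$-integrator for~\eqref{FhnOde} produces tight enclosures of the Poincar\'e return map.

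First I would compute a high-accuracy non-rigorous approximation $\Gamma(\epsilon)$ of the periodic orbit on a grid of $\epsilon$, and choose a Poincar\'e section $\Sigma$, or a short finite family of sections, transverse to $\Gamma(\epsilon)$. A convenient choice is to place sections along the two fast jumps between the branches $\Lambda_u$ and $\Lambda_d$ of the slow manifold, where the flow is uniformly transverse and the velocity is bounded away from zero. Around each intersection of $\Gamma(\epsilon)$ with $\Sigma$ I would build a two-dimensional h-set $N$ whose unstable direction is aligned with the expanding direction of the linearized return map; the remaining direction captures the strong contraction coming from the saddle structure of the slow manifold.

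Next, I would partition $[1.5\times 10^{-4},\,0.0015]$ into a finite mesh $\epsilon_0<\epsilon_1<\cdots<\epsilon_k$ with $\epsilon_0=1.5\times 10^{-4}$ and $\epsilon_k=0.0015$. On each subinterval $I_j=[\epsilon_j,\epsilon_{j+1}]$ I would verify, using interval arithmetic and a rigorous Taylor integrator, a self-covering
\[
  N \cover{P_\epsilon} N \quad\text{for all } \epsilon\in I_j,
\]
where $P_\epsilon$ is the full Poincar\'e return map, possibly factored as a composition of several covering relations between intermediate sections. The theorem on covering relations developed in the earlier part of the paper then yields, for every such $\epsilon$, a fixed point of $P_\epsilon$ and hence a periodic orbit of~\eqref{FhnOde}. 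Taking the union of the mesh subintervals covers the whole parameter range.

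The main obstacle is computational: as $\epsilon$ decreases, the slow portion of the orbit takes time of order $1/\epsilon$, so the rigorous integrator accumulates wrapping effect that inflates $P_\epsilon(N)$ and can destroy the covering. The hard part is to choose the geometry, alignment and diameter of the h-sets, together with step-size control and the mesh $\{\epsilon_j\}$, so that the self-covering holds uniformly on each subinterval while the number of subintervals $k$ stays manageable. Near the upper endpoint $\epsilon=0.0015$ the interval Newton--Moore method already succeeds, so the h-sets there can be taken small and the verification is easy; the mesh must be progressively refined as one approaches $\epsilon = 1.5\times 10^{-4}$, where the fast-slow stiffness forces the switch to the different argument used in Theorem~\ref{thm:main1}.
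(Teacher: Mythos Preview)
Your overall strategy---validated continuation in $\epsilon$ using covering relations for Poincar\'e maps, applying Theorem~\ref{cov:sequence}---matches the paper's. The difference is in the section geometry, and it matters.

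You propose one section (or a ``short finite family'') placed on the fast jumps, verifying a self-covering $N \cover{P_\epsilon} N$ by the full return map, with factoring through intermediate sections mentioned only as an option. The paper instead places a \emph{long} closed chain of sections---between $179$ and $671$ of them, depending on $\epsilon$---distributed along the \emph{entire} approximate orbit, including the slow portions, and verifies a closed loop of coverings $X_0 \cover{\texttt{pm}_0} X_1 \cover{\texttt{pm}_1} \cdots \cover{} X_0$ where each $\texttt{pm}_i$ integrates only a short time. This is not an incidental choice: with sections only on the fast jumps, each Poincar\'e map would have to traverse an entire slow arc of length $O(1/\epsilon)$, and the wrapping effect you yourself flag would destroy the covering near $\epsilon = 1.5\times 10^{-4}$. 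The many-section chain is precisely what keeps every integration time bounded and the enclosures tight.

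Two further implementation points worth noting. First, the paper obtains its starting approximation from MATCONT at $\epsilon=0.001$ and continues \emph{both} downward to $1.5\times 10^{-4}$ and upward to $0.0015$, refining at each step by Newton's method on the coupled system~\eqref{eq:problemform}. Second, the exit/entry coordinate frames on each section are computed by propagating vectors through the variational equation (forward and backward) until they stabilize on the dominant expanding/contracting directions; this alignment is what makes the coverings verifiable with modest subdivision ($\texttt{div}=5$). Your sketch of aligning the unstable direction with the expanding direction of the linearized return map is the right idea, but the concrete mechanism is this variational-flow stabilization rather than a single eigenvector computation.
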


\begin{thm}\label{thm:main3}
  For $\epsilon = 0.0015$, $\theta = 0.61$ and other parameter values given in~\eqref{eq:parameters} there exists a periodic orbit of~\eqref{FhnOde},
  which is formed from a locally unique fixed point of a Poincar\'e map.
\end{thm}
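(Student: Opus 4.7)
The plan is a direct computer-assisted verification at the single parameter value $\epsilon=0.0015$, bypassing any singular-perturbation reduction and instead applying the interval Newton--Moore operator to a Poincar\'e return map. First I would fix a hyperplane $\Sigma$ transverse to an approximate periodic orbit obtained by standard (non-rigorous) integration; a natural choice is a section of the form $\{u=u_{0}\}$ crossing the orbit on its fast ``jump'' part, where the $v$-component of the vector field is large and transversality is easy to certify. On $\Sigma$ I locate numerically an approximate fixed point $\bar{x}$ of the return map $P$.

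Second, because the orbit spends a very long time drifting along neighborhoods of the slow manifold $C_{0}$, a single interval-arithmetic integration around the whole loop would suffer catastrophic wrapping. I would therefore introduce a sequence of intermediate sections $\Sigma=\Sigma_{0},\Sigma_{1},\ldots,\Sigma_{k}=\Sigma$ placed along the approximate trajectory and write $P=P_{k}\circ\cdots\circ P_{1}$, where each short-range Poincar\'e map $P_{i}\colon\Sigma_{i-1}\to\Sigma_{i}$ is evaluated rigorously with a $C^{1}$-Lohner--type integrator, producing verified enclosures of both the solution and the variational flow. Composing these enclosures yields interval bounds for $P(\bar{x})$ and for the Jacobian $DP(X)$ over a small candidate box $X\ni\bar{x}$ in $\Sigma$.

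Third, I would verify the Newton--Moore inclusion for the displacement $F(x)=P(x)-x$:
\begin{equation*}
N(X,\bar{x}) \;:=\; \bar{x} \;-\; [DF(X)]^{-1}\, F(\bar{x}) \;\subset\; \inter X,
\end{equation*}
where $[DF(X)]$ is the interval matrix obtained by assembling the variational parts of the $P_{i}$. If this inclusion is established, the standard interval Newton--Moore theorem gives a unique zero of $F$ inside $X$; this zero is then the locally unique fixed point of $P$, and hence the sought periodic orbit of~\eqref{FhnOde}.

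The main obstacle is numerical: at $\epsilon=0.0015$ the system is still markedly stiff, the dominant multiplier of $DP$ along the two passes near $C_{0}$ is very large, while the complementary direction is strongly contracting, so naive interval enclosures of $DP(X)$ will be hopelessly ill-conditioned. The practical work consists in choosing enough intermediate sections, aligning each $\Sigma_{i}$ with the approximate flow, and propagating boxes in a well-adapted coordinate frame (a Lohner doubleton or QR-based representation that tracks the expanding direction) so that the cumulative overestimation on $P(X)$ and on $[DP(X)]$ stays small enough for $N(X,\bar{x})\subset\inter X$ to hold. Once this is engineered successfully for $\epsilon=0.0015$, the rest of the argument is an automatic application of the Newton--Moore test.
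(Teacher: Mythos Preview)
Your high-level plan---verify the interval Newton--Moore inclusion for a Poincar\'e-map formulation using rigorous $C^{1}$ integration with CAPD/Lohner representations---is exactly the paper's strategy, and your diagnosis of the numerical difficulty (stiffness, extreme multipliers near $C_{0}$, need for many intermediate sections) is accurate.

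The one substantive difference is \emph{where} Newton--Moore is applied. You propose to compose the short-range maps into a single return $P=P_{k}\circ\cdots\circ P_{1}$ and test $F(x)=P(x)-x$ on a two-dimensional box $X\subset\Sigma$; the Jacobian enclosure $[DF(X)]$ then comes from the interval product of all $[DP_{i}]$. The paper instead applies Newton--Moore directly to the \emph{multiple-shooting} system~\eqref{eq:problemform}, treating all intermediate section points $x_{1},\dots,x_{k}$ (here $k=179$) as unknowns; the Jacobian is then a large sparse block-bidiagonal interval matrix whose blocks are the individual $[DP_{i}]$, never multiplied together. The advantage of the paper's formulation is precisely the issue you flag: composing $179$ interval Jacobians along a strongly hyperbolic orbit compounds wrapping multiplicatively, and even with a QR/doubleton representation the resulting $[DP(X)]$ may be too wide to invert usefully. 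In the block system each $[DP_{i}]$ enters only once, so the overestimation stays additive rather than multiplicative. Your variant is not wrong in principle, but it is the numerically harder route; if it fails in practice, switching to the block formulation of~\eqref{eq:problemform} is the fix.
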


The reason we do not merge statements of Theorems~\ref{thm:main1} and~\ref{thm:main2} is a significant difference in proof techniques.
For the proof of Theorem~\ref{thm:main1} we exploit the fast-slow structure and construct a sequence of \emph{isolating segments} and \emph{covering relations}
around the singular orbit.
Let us remark that combination of these two topological methods is new, and isolating segments have been previously applied
only to time-dependent equations.
For Theorem~\ref{thm:main2} we perform a ``regular ODE'' type of proof,
with a parameter continuation method based on verifying covering relations around an approximation of the periodic orbit.
Finally, in Theorem~\ref{thm:main3} we are far enough from $\epsilon=0$, so that a proof by the interval Newton-Moore method applied to a sequence of Poincar\'e maps 
succeeds (see Subsection~\ref{subsec:thm13}), establishing both the existence and local uniqueness.

The motivation for this choice of wave speed was that in numerical simulations parts of the periodic orbit
near the slow manifold stretched relatively long, which allowed us to fully exploit its hyperbolicity.
In the program files a lot of values were hardcoded for this particular $\theta$, but we report that
by substituting $\theta=0.53$, $\theta=0.47$, $\theta \in [0.55,0.554]$ we were also able
to produce results like Theorem~\ref{thm:main1}, for a (shorter) range of $\epsilon \in (0, 5 \times 10^{-5}]$.
We think that by spending time tuning the values in the proof, the range of $\epsilon$ for these $\theta$'s could have been made wider.
This is of course the very same orbit and if one had enough patience, then continuation in $\theta$ would be theoretically possible.

Let us observe that proofs employing \emph{interval arithmetics} are fairly easy to adapt to compact parameter ranges. 
A \emph{validated continuation} can be performed by
subdividing the parameter interval finely and feeding the program with small parameter intervals instead of one exact value~\cite{Day,Lessard,Galias2,WilczakZgliczynski}.
However, we emphasise again that
when trying to prove Theorems~\ref{thm:main1} we are dealing with a half-open range and such straightforward approach 
is bound to fail -- as $\epsilon \to 0^{+}$ the period of the periodic orbit grows to infinity 
and in the singular limit $\epsilon=0$ the orbit is destroyed.
Therefore not all premises in theorems implying existence of such orbits can be verified for $\epsilon \in [0,\epsilon_0]$;
the assumptions need to be formulated in such a way, that the ones which are computationally difficult are possible to check with computer aid for $\epsilon \in [0,\epsilon_0]$,
and the leftover ones are in a simple form where $\epsilon$ can be factored out ``by hand'', assuming $\epsilon > 0$.
To be more specific, $\epsilon$ is factored out to check the monotonicity of the slow flow in
certain sets (the aforementioned isolating segments) in the neighborhood of the slow manifold,
which reduces to a simple inequality for the slow part of the vector field (inequality (S1a) in Remark~\ref{alternativeS}, cf. inequality~\eqref{eq:sgna}).
Clearly, such technique can be applied to other fast-slow systems with one-dimensional slow manifold,
even if the slow flow is nonlinear.

Full proofs are executed with computer assistance and described 
in detail in Section~\ref{sec:implementation}.
For the computer assisted assumption verification we use the CAPD library~\cite{CAPD}, 
which provides algorithms for enclosures of solutions of ODEs, Poincar\'e maps and their derivatives.
By a rigorous enclosure we mean that the true result of a given operation is always contained in an interval object (vector, matrix) returned by the program procedure.
Since our assumptions are in essence a collection of strict inequalities, if we keep the overestimates small,
we should be able to verify them on a machine.

The code which executes the necessary computations is available at the author's homepage~\cite{Czechowski}.
Below we outline the basic ideas of each proof.

\subsection{Outline of the proof of Theorem~\ref{thm:main1}}

We conduct a phase space proof based on a reduction to a sequence of Poincar\'e sections and a fixed point argument for a sequence of Poincar\'e maps.
For $\epsilon > 0$ small the orbit switches between two regimes - the fast one close to heteroclinics of the fast subsystem
and the slow one along the branches $\Lambda_d, \Lambda_u$ of the slow manifold. The strategy is to form a closed sequence of
covering relations and isolating segments and deduce the existence of a fixed point of a sequence of Poincar\'e maps
via a topological theorem -- Theorem~\ref{thm:2}. 

For the fast regime we employ the rigorous integration and we check covering relations as defined in~\cite{GideaZgliczynski}.
Informally speaking, one introduces compact sets called \emph{h-sets} on the Poincar\'e sections near the points $\Lambda_d(w_{*}),\Lambda_u(w_{*})$, $\Lambda_u(w^{*}),\Lambda_d(w^{*})$.
They come equipped with a coordinate system with one direction specified as exit, the other as entry.
To verify a covering relation by a Poincar\'e map between such two sets $X$, $Y$
one needs to check that the exit direction edges of $X$ are stretched over $Y$ in the exit direction
and that the image of $X$ is contained in the entry direction width of $Y$, see Figure~\ref{coveringFig} in Section~\ref{sec:covseg}.
The ``shooting'' in the exit direction is in fact made possible by a non-degenerate intersection of stable and unstable manifolds of the respective fixed
points in the singular limit $\epsilon=0$, as described in condition (P2) in Section~\ref{sec:covslowfast}.

Around the slow manifold branches we place isolating segments, which allow us to track the orbit in this region
without rigorous integration, see Theorem~\ref{is:1}.
In short, isolating segments are three-dimensional solids diffeomorphic to cubes and akin to isolating blocks of Conley and Easton~\cite{Easton}
or periodic isolating segments of Srzednicki~\cite{Srzednicki, SrzednickiWojcik}.
For each isolating segment one distinguishes three directions: exit, entry and central. 
It is required that the faces in the exit direction are immediate exit sets for the flow, the faces in the entry direction are immediate entrance sets
and the flow is monotone along the one-dimensional central direction.
The first two assumptions are checked by a computer (for $\epsilon \in [0,1.5 \times 10^{-4}]$), exploiting the hyperbolicity of branches of the slow manifold.
The last one we can easily fulfill by aligning the central direction of segments with the slow variable direction.
This setup reduces the central direction condition to a verification of whether $\frac{dw}{dt} \neq 0$ for all points in such segment. 
Under assumption $\epsilon \neq 0$ we can then factor out $\epsilon$ from the slow velocities,
and our condition reduces to a question whether $u \neq w$ for all points in each segment, which is straightforward to check.
This is the only moment in the proof, when we need to make use of the assumption that $\epsilon$ is strictly greater than $0$ (i.e. $\epsilon \in (0,1.5 \times 10^{-4}]$).

We place four supplementary ``corner segments'', containing the corner points $\Lambda_d(w_{*})$, $\Lambda_u(w_{*})$, $\Lambda_u(w^{*})$, $\Lambda_d(w^{*})$,
the role of which is to connect the h-sets with the segments around the slow manifold. 
From the viewpoint of definition these are no different than regular isolating segments.
However, the mechanism of topological tracking of the periodic orbit here is slightly distinct, as
the central direction changes roles with the exit/entry ones, see Theorems~\ref{is:3}, \ref{is:4}.

To obtain a closed loop, the sizes of the first and the last h-set in the sequence have to match.
For that purpose isolating segments around the slow manifold may need to
grow in the exit direction and compress in the entry one as we move along the orbit.
This way we can offset the size adjustments of the h-sets, which may be necessary to obtain covering relations in the fast regime.
The analysis of a model example performed in Theorem~\ref{thm:heuristic} is devoted to providing an argument for why this should work for $\epsilon$ small.
The main idea is that as $\epsilon \to 0^{+}$ the vector field in the slow/central direction decreases to 0 and
the slope of the segment becomes irrelevant when checking isolation, see Figure~\ref{slopeFig} in Section~\ref{sec:covslowfast}.

A schematic drawing representing the idea of the proof for the model example is given in Figure~\ref{schemeFig}, in Section~\ref{sec:covslowfast}.

\subsection{Outlines of the proofs of Theorems~\ref{thm:main2}, \ref{thm:main3}}

We are already at some distance from $\epsilon=0$, but for small $\epsilon$ the periodic orbit's normal bundle is consisting
of one strongly repelling and one strongly contracting direction, so any attempts of approximating the orbit by numerical integration, either forward or backward in time, fail.
On the other hand, the singular orbit at $\epsilon=0$ no longer serves as a good approximation for the purpose of a computer assisted proof.
As we can see, the challenge now is on the numerical, rather than the conceptual side.
To find our good numerical guess we introduce a large amount of sections, so that the integration times between each two of them
do not exceed some given bound and then apply Newton's method to a problem of the form
\begin{equation}\label{eq:problemform}
  \begin{aligned}
    P_{1}(x_{1}) - x_{2} &= 0, \\ P_{2}(x_{2}) - x_{3}&=0, \\ \dots  \\ P_{k}(x_{k}) - x_{1}&=0,
  \end{aligned}
\end{equation}
where $P_{i}$'s are the respective Poincar\'e maps.
Then, we construct a closed sequence of h-sets on these sections and verify covering relations between each two consecutive ones,
to prove the periodic orbit by means of Theorem~\ref{cov:sequence} (Corollary 7 in \cite{GideaZgliczynski}).
Since we control the integration times, isolating segments
are not needed anymore -- in Theorem~\ref{thm:main1} they were used for pieces of the orbit where the integration time tended to infinity.

By a rigorous continuation with parameter $\epsilon$, we are able to
get an increase of one order of magnitude for the upper bound of the range of $\epsilon$'s, for which the periodic orbit is confirmed.
Without much effort we show that for this value of $\epsilon$ the classical (see \cite{Galias, Zgliczynski, Alefeld, Neumaier} and references given there) method 
of application of the interval Newton-Moore operator to a problem of form~\eqref{eq:problemform} succeeds.
This requires a rigorous $C^{1}$ computation, but these are handled efficiently by the $C^{1}$ Lohner algorithm implemented in CAPD~\cite{Zgliczynski}.

\subsection{Organization of the paper}

The content of this paper is arranged as follows. In Section~\ref{sec:covseg}
we provide a self-contained theory on how to incorporate isolating segments into the method of covering relations.
Typical application to find periodic orbits is given by Theorem~\ref{thm:1} which works for any number of exit and entry dimensions.
This theorem is however only for future references; for the FitzHugh-Nagumo equation we use Theorem~\ref{thm:2}, which
is restricted to one exit and one entry direction and employs a certain switch between slow and fast directions in segments (Theorems~\ref{is:3}, \ref{is:4}),
which can be viewed as a topological version of the Exchange Lemma (cf. Chapter 5 in~\cite{JonesBook} and references given therein) from geometric singular perturbation theory (GSPT). 
In Section~\ref{sec:covslowfast} we give arguments
of why one can find a closed sequence of covering relations in the FitzHugh-Nagumo system for $\epsilon>0$ small: given a 3D model fast-slow system with a
singular orbit sharing the qualitative properties of the one in~\eqref{FhnOde}, we argue that it is possible to apply Theorem~\ref{thm:2} to obtain the existence of a periodic orbit.
In some sense this is a repetition of methods in~\cite{Conley,Carpenter,Hastings} recast in the language of covering relations and isolating segments.
The exposition of Section~\ref{sec:covslowfast} is informal and no part of this section is needed to prove
any theorems outside it, in particular Theorems~\ref{thm:main1}, \ref{thm:main2} or \ref{thm:main3};
the reason for including it in this paper is to give the reader some intuition for the relation between our abstract topological methods
and the fast-slow structure of the system.
Finally in Section~\ref{sec:implementation} we give details of the computer assisted proofs of Theorems~\ref{thm:main1}, \ref{thm:main2}, \ref{thm:main3}
and provide some numerical data from the programs.

\section{Covering relations and isolating segments}\label{sec:covseg}

\subsection{Notation}

Unless otherwise stated $\vectornorm{\cdot}$ can be any fixed norm in $\rr^{n}$.
Given a norm, by $B_{n}(c,r)$ we will denote the ball of radius $r$ centered at $c \in \rr^{n}$.
We will drop the subscript if the dimension is clear from the context.
By $\langle \cdot , \cdot \rangle$ we will denote the standard dot product in $\rr^{n}$.

We assume that $\rr$ is always equipped with the following norm: $\vectornorm{x} = |x|$.

Given a set $Z$, by $\inter Z$, $\overline{Z}$, $\partial Z$ and $\conv{Z}$
we will denote the interior, closure, boundary and the convex hull of $Z$, respectively.

Given a topological space $X$, a subspace $D \subset \rr \times X$ and a local flow $\varphi : D \to X$,
by writing $\varphi(t,x)$ we will implicitly state that $(t,x) \in D$,
so for example by
\begin{equation}
 \varphi(t,x) = y
\end{equation}
we will mean $\varphi(t,x)$ exists and $\varphi(t,x) = y$.

By $\id_{X}$ we denote the identity map on $X$.
The symbol $\const$ denotes a constant -- usually some uniform bound -- the value of which being not important to us,
so for example the expression $f(x) > \const, \ x \in X$, means that there exists $C \in \rr$ such that
$f(x)>C \ \forall x \in X$.

The local Brouwer degree of a continuous map $g: \rr^n \to \rr^n$ at $c \in \rr^n$ 
in an open, bounded set $\Omega \subset \rr^n$ will be denoted by $\deg(c, g, \Omega )$, provided it is well-defined.

By smoothness we mean $C^{1}$ smoothness. In some assumptions differentiability would be enough,
but we do not go into such details.

\subsection{Covering relations - basic notions}

In this subsection we recall the definitions of h-sets, covering and back-covering relations for maps as introduced in \cite{GideaZgliczynski}.
We make a following change in the nomenclature: in~\cite{GideaZgliczynski} various objects related to h-sets (directions, subsets, etc.) 
are being referred to as unstable or stable. We will refer to them as exit and entry/entrance, respectively; 
we think that this reflects better their dynamical nature and does not lead to misunderstandings.
However, we keep the original symbols $u,s$, so $u$ should be connoted with exit and $s$ with entry.

\begin{defn}[Definition 1 in~\cite{GideaZgliczynski}]\label{h-set}
  An h-set is formed by a quadruple 
  \begin{equation}
    X=(|X|,u(X),s(X),c_{X})
  \end{equation}
  consisting of
  a compact set $|X| \subset \rr^{n}$ - \emph{the support}, a pair of numbers $u(X), s(X) \in \mathbb{N}$
  such that $u(X)+s(X) = n$ (the number of exit and entry directions, respectively)
  and a coordinate change homeomorphism $c_{X}: \rr^{n} \rightarrow \rr^{u(X)} \times \rr^{s(X)}$
  such that
  \begin{equation}\label{h-set:1}
    c_X(|X|)=\overline{B_{u(X)}(0,1)} \times \overline{B_{s(X)}(0,1)}.
  \end{equation}

  We set:
  \begin{equation}
    \begin{aligned}
      X_{c}&:=\overline{B_{u(X)}(0,1)} \times \overline{B_{s(X)}(0,1)}, \\
      X_{c}^{-}&:=\partial B_{u(X)}(0,1) \times \overline{B_{s(X)}(0,1)}, \\
      X_{c}^{+}&:=\overline{B_{u(X)}(0,1)} \times \partial B_{s(X)}(0,1), \\
      X^{-} &:= c_{X}^{-1}(X_{c}^{-}), \\
      X^{+} &:= c_{X}^{-1}(X_{c}^{+}).
    \end{aligned}
  \end{equation}

  We will refer to $X^{-}$/$X^{+}$ as the exit/entrance sets, respectively.
  To shorten the notation we will sometimes drop the bars in the symbol $|X|$ and just write $X$ to denote both the
  h-set and its support.
\end{defn}

\begin{rem}
  Due to condition~\eqref{h-set:1}, it is enough to specify $u(X), s(X)$ and $c_{X}$ to unambiguously define an h-set $X$.
\end{rem}

\begin{defn}[Definitions 2, 6 in~\cite{GideaZgliczynski}]\label{covering}
  Assume that $X,Y \subset \rr^{n}$ are h-sets, such that $u(X)=u(Y)=u$ and $s(X)=s(Y)=s$. Let $g:\Omega \to \rr^{n}$ be a
  map with $|X| \subset \Omega \subset \rr^n$. Let $g_{c}= c_{Y} \circ g \circ c_{X}^{-1}: X_{c} \to \rr^{u} \times \rr^{s}$
  and let $w$ be a non-zero integer.
  We say that $X$ $g$-covers $Y$ with degree $w$ and write
  \begin{equation}
     X \cover{g,w} Y
  \end{equation}
  iff $g$ is continuous and the following conditions hold
  \begin{itemize}
    \item[1.] there exists a continuous homotopy $h: [0,1] \times X_{c} \to \rr^{u} \times \rr^{s}$, such that
     \begin{align}
       h_{0}&=g_{c},\label{cover:1a} \\
       h([0,1],X_{c}^{-}) \cap Y_{c} &= \emptyset ,\label{cover:1b}  \\
       h([0,1],X_{c}) \cap Y_{c}^{+} &= \emptyset .\label{cover:1c}
    \end{align}
  \item[2.] There exists a continuous map $A:\rr^{u} \to \rr^{u}$, such that
    \begin{equation}\label{cover:2}
      \begin{aligned}
        h_{1}(p,q)&=(A(p),0) \quad \forall p \in \overline{B_{u}(0,1)} \text{ and } q \in \overline{B_{s}(0,1)}, \\
        A(\partial B_{u}(0,1)) &\subset  \rr^{u} \setminus \overline{B_{u}(0,1)}, \\
        \deg \left( 0,A, B_u(0,1) \right) &= w.
      \end{aligned}
   \end{equation}
  \end{itemize}

  In case $A$ is a linear map, from (A8) we get $\deg \left( 0, A , B_u(0,1) \right) = \sgn \det A = \pm 1$.
   In such situation we will often say that $X$ $g$-covers $Y$, omit the degree and write $X \cover{g} Y$.
\end{defn}

\begin{rem}[Remark 3 in~\cite{GideaZgliczynski}]
  For $u=0$ we have $B_{u}(0,1) = \emptyset$ and $X$ $g$-covers $Y$ iff $g(|X|)$ is a subset of $\inter |Y|$. 
  In that case, we formally set the degree $w$ to $1$.
\end{rem}

\begin{defn}[Definition 3 in~\cite{GideaZgliczynski}]
  Let $X$ be an h-set. We define the \textit{transposed h-set} $X^{T}$ as follows:
  \begin{itemize}
    \item $|X| = |X^{T}|$,
    \item $u(X^{T}) = s(X)$ and $s(X^{T}) = u(X)$,
    \item $c_{X^{T}}(x) = j(c_{X}(x))$, where $j: \rr^{u(X)} \times \rr^{s(X)} \to \rr^{s(X)} \times \rr^{u(X)}$ is given by $j(p,q)=(q,p)$.
  \end{itemize}
\end{defn}

Observe that $(X^{T})^{+} = X^{-}$ and $(X^{T})^{-} = X^{+}$, thus transposition
changes the roles of exit and entry directions.

\begin{defn}[Definition 4, 7 in~\cite{GideaZgliczynski}]
  Let $X,Y$ be h-sets with $u(X) = u(Y)$ and $s(X) = s(Y)$. Let $g: \Omega \subset \rr^{n} \to \rr^{n}$.
  We say that $X$ $g$-backcovers $Y$ with degree $w$
  and write $X \backcover{g,w} Y$ iff 
  \begin{itemize}
    \item $g^{-1}: |Y| \to \rr^{n}$ exists and is continuous, 
    \item $Y^{T}$ $g^{-1}$-covers $X^{T}$ with degree $w$.
   \end{itemize}
\end{defn}

\begin{defn}[Definition 5 in~\cite{GideaZgliczynski}]
  We will use the notation $X \gencover{g,w} Y$ and say that $X$ generically $g$-covers $Y$ with degree $w$ iff any of these two hold:
  \begin{itemize}
    \item $X$ $g$-covers $Y$ with degree $w$,
    \item $X$ $g$-backcovers $Y$ with degree $w$.
  \end{itemize}
\end{defn}

Again, we will sometimes omit the degree in our notation, in case the homotopy can be given to a linear map.

Let us state the fundamental lemma motivating the use of covering relations for finding trajectories and fixed points of sequences of maps.

\begin{thm}[Theorem 9 in \cite{GideaZgliczynski}]\label{cov:sequence}
  Let $X_i, \ i \in \{0, \dots, k\}$ be h-sets with $u(X_0) = \dots = u(X_k)$, $s(X_0)= \dots = s(X_k)$ and set $n=u(X_0)+s(X_0)$.
  Assume that we have the following chain of covering relations:
 \begin{equation}
  X_{0} \longgencover{g_{1},w_1} X_{1} \longgencover{g_{2},w_2} X_{2} \longgencover{g_{3},w_3}  \dots \longgencover{g_{k},w_k} X_{k}.
  \end{equation}
  for some $w_i \in \zz^{*}$. Then there exists a point $x \in \inter X_{0}$ such that
 \begin{equation}
  (g_{i} \circ g_{i-1} \circ \dots \circ g_{1}) (x) \in \inter X_{i}, \ i \in \{ 1, \dots k \}.
 \end{equation}
 Moreover, if $X_{k}=X_{0}$, then $x$ can be chosen so that
 \begin{equation}
  (g_{k} \circ g_{k-1} \circ \dots \circ g_{1}) (x) = x.
 \end{equation}
\end{thm}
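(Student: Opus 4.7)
The plan is to construct an auxiliary map on a product space whose zeros encode the desired trajectory, and then apply Brouwer degree theory through a homotopy argument. First I would pass to the model coordinates via the homeomorphisms $c_{X_i}$, so that each support $|X_i|$ becomes the standard product $\overline{B_u(0,1)} \times \overline{B_s(0,1)}$, and each map $g_i$ is replaced by $g_i^c := c_{X_i} \circ g_i \circ c_{X_{i-1}}^{-1}$. The covering data then consist of homotopies $h^{(i)} : [0,1] \times (X_{i-1})_c \to \rr^u \times \rr^s$ deforming $g_i^c$ to a model map $(p,q) \mapsto (A_i(p), 0)$ satisfying the boundary conditions~\eqref{cover:1b} and~\eqref{cover:1c}.

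Focusing on the periodic case $X_k = X_0$, I would set
\[ \Omega := \inter \bigl( (X_0)_c \times (X_1)_c \times \cdots \times (X_{k-1})_c \bigr) \]
and consider the map
\[ F(x_0, \ldots, x_{k-1}) := \bigl( g_1^c(x_0) - x_1,\ g_2^c(x_1) - x_2,\ \ldots,\ g_k^c(x_{k-1}) - x_0 \bigr), \]
defined on $\overline{\Omega}$ with values in $(\rr^n)^k$. Its zeros in $\Omega$ are exactly the periodic sequences we are after. Splicing the $h^{(i)}$ together yields a global homotopy from $F$ to the model map $\tilde F$ in which each $g_i^c$ is replaced by its $(A_i(p), 0)$-form. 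The crucial observation is that a zero of any intermediate map at a boundary point of $\Omega$ would force some $x_{i-1} \in (X_{i-1})_c^-$ or $x_i \in (X_i)_c^+$ together with $g_i^c(x_{i-1}) = x_i$, which is ruled out precisely by~\eqref{cover:1b} and~\eqref{cover:1c}. Hence the local Brouwer degree is preserved along the homotopy.

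The degree of $\tilde F$ at $0$ then splits along the $u$- and $s$-components: the $s$-part collapses to $-q_i = 0$, forcing all $q_i = 0$, while the $u$-part yields the cyclic system $p_i = A_i(p_{i-1})$. A standard block-permutation argument identifies this degree with $\pm \prod_{i=1}^{k} w_i \neq 0$, so $F$ must vanish in $\Omega$, producing the desired $x \in \inter X_0$ with $(g_k \circ \cdots \circ g_1)(x) = x$. The non-periodic statement reduces to the periodic one by closing the chain with an auxiliary step of degree $\pm 1$, or alternatively by an induction on $k$. Back-covering steps are accommodated by passing to transposed h-sets and replacing $g_i$ by $g_i^{-1}$, which swaps the roles of $u$ and $s$ at that step but leaves the overall structure of the argument intact.

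The main obstacle is the degree computation for $\tilde F$: one has to verify that the spliced boundary conditions continue to prevent zeros from escaping $\partial \Omega$ at all intermediate homotopy times, and that the product-of-degrees formula carries over cleanly in the presence of mixed forward and back-covering steps. Everything else is careful bookkeeping in the coordinate charts supplied by the $c_{X_i}$.
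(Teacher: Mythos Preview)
The paper does not supply its own proof of this statement: it is quoted verbatim as Theorem~9 from~\cite{GideaZgliczynski} and used as a black box. So there is nothing in the paper to compare your argument against.

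That said, your sketch is essentially the standard proof from~\cite{GideaZgliczynski}. The product-space map $F$ on $\overline{\Omega}$, the homotopy spliced from the individual $h^{(i)}$, the use of~\eqref{cover:1b}--\eqref{cover:1c} to keep zeros away from $\partial\Omega$, and the block computation giving $\deg(0,\tilde F,\Omega) = \pm\prod_i w_i$ are all exactly what appears there. Your treatment of back-covering steps (transpose and invert, swapping the roles of $u$ and $s$ locally) is also the correct idea; in~\cite{GideaZgliczynski} this is handled by a preliminary lemma that replaces a back-covering by a ``true'' covering of an auxiliary map, after which the pure-covering argument applies uniformly. The one place to be careful is the non-periodic clause: closing the chain with an artificial degree-one covering works, but you must check that the artificial map can be chosen so that its covering homotopy still satisfies the boundary conditions relative to $X_k$ and $X_0$; the induction route avoids this bookkeeping. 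None of this is a gap---just details to fill in.
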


For an h-set $X$ with $u(X)=1, \ s(X)=s$ we have:
\begin{equation}
  \begin{aligned}
    X_{c} &= [-1,1] \times \overline{B_{s}(0,1)}, \\
    X_{c}^{-} &= (\{-1\} \times \overline{B_{s}(0,1)}) \cup (\{1\} \times \overline{B_{s}(0,1)}),
  \end{aligned}
\end{equation}

We will often use the following geometrical criterion for verifying of the covering relation in such a case:

\begin{lem}[Theorem 16 in \cite{GideaZgliczynski}]\label{covlemma}
  Let $X,Y$ be h-sets with $u(X)=u(Y)=1$, $s(X)=s(Y)=s$. Let $g:X \to \rr^{s+1}$ be a continuous map.
  Assume that both of the following conditions hold:
  \begin{itemize}
    \item[(C1)] We have
      \begin{equation}
        g_{c}(X_{c}) \subset \inter( ((-\infty, -1) \times \rr^{s}) \cup Y_{c} \cup ((1, \infty) \times \rr^{s}) ),
      \end{equation}
    \item[(C2)] either
      \begin{equation}
        \begin{aligned}
          g_{c}(\{-1\} \times \overline{B_{s}(0,1)}) \subset (-\infty, -1) \times \rr^{s} &\text{ and } g_{c}(\{1\} \times \overline{B_{s}(0,1)}) \subset (1, \infty) \times \rr^{s} \\
          &\text{or} \\
          g_{c}(\{-1\} \times \overline{B_{s}(0,1)}) \subset (1,\infty) \times \rr^{s} &\text{ and } g_{c}(\{1\} \times \overline{B_{s}(0,1)} ) \subset (-\infty, -1) \times \rr^{s}.
        \end{aligned}
      \end{equation}
  \end{itemize}

  Then
  \begin{equation}
   X \cover{g} Y.
  \end{equation}
\end{lem}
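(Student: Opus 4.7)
The plan is to construct a homotopy realizing the covering relation $X \cover{g} Y$ from Definition~\ref{covering}. Decompose points of $X_c = [-1,1] \times \overline{B_s(0,1)}$ as $(p,q)$ and write $g_c(p,q) = (g_c^1(p,q), g_c^2(p,q))$ with $g_c^1 \in \rr$ and $g_c^2 \in \rr^s$. The first move is to rewrite (C1) in a form that is easier to use: since the complement of the open set in (C1) equals $[-1,1] \times \{q : |q| \geq 1\}$, condition (C1) is equivalent to saying that, for every $(p,q) \in X_c$, at least one of $|g_c^1(p,q)| > 1$ or $|g_c^2(p,q)| < 1$ holds.

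I would then build the homotopy in two stages. The first stage contracts the entry component to zero while freezing the exit component:
\[
  h^{(1)}_t(p,q) = \bigl( g_c^1(p,q),\ (1-t) g_c^2(p,q) \bigr), \quad t \in [0,1].
\]
For \eqref{cover:1c}, if $|g_c^1(p,q)| > 1$ the image is already out of $Y_c$; otherwise the reformulated (C1) gives $|g_c^2(p,q)| < 1$ and hence $|(1-t) g_c^2(p,q)| < 1$, so the image avoids $Y_c^+$. For \eqref{cover:1b}, when $(p,q) \in X_c^-$ we have $p = \pm 1$, and (C2) forces $|g_c^1(p,q)| > 1$, so the image lies outside $Y_c$.

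The second stage deforms the exit component to a standard linear map. In the first subcase of (C2) set $A(p) := 2p$ and define
\[
  h^{(2)}_t(p,q) = \bigl( (1-t) g_c^1(p,q) + tA(p),\ 0 \bigr);
\]
in the second subcase use $A(p) := -2p$. The entry component is identically zero, so \eqref{cover:1c} is automatic. Condition \eqref{cover:1b} holds because, at $p = -1$ in the first subcase, both $g_c^1(-1,q)$ and $A(-1) = -2$ lie in $(-\infty,-1)$, and this half-line is convex, so their convex combination stays strictly below $-1$; the cases $p = 1$ and the second subcase of (C2) are symmetric. The endpoint $h^{(2)}_1(p,q) = (A(p), 0)$ satisfies \eqref{cover:2} with $\deg(0, A, B_1(0,1)) = \pm 1$ depending on the subcase. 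Concatenating $h^{(1)}$ and $h^{(2)}$ via a piecewise reparametrization of $[0,1]$ yields the desired homotopy $h$, proving $X \cover{g} Y$.

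The single point where care is needed---and the only real obstacle---is the insistence on splitting the homotopy into two stages. A naive one-step homotopy of the form $((1-t)g_c^1 + tA(p),\ (1-t)g_c^2)$ can cross $Y_c^+$: if $|g_c^2(p,q)|$ is somewhat larger than $1$, the contracting entry norm passes through the value $1$ at some intermediate $t$, and there is no uniform reason for the simultaneously interpolated exit coordinate to stay outside $[-1,1]$ at that moment. Killing the entry component first, while the exit component is still shielded by (C1), and only then interpolating the exit component after the entry component has been safely driven to zero, removes this issue cleanly.
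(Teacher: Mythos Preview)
Your proof is correct. The paper does not supply its own proof of this lemma; it is simply quoted as Theorem~16 of \cite{GideaZgliczynski}, so there is nothing to compare against here. Your two-stage homotopy---first collapsing the entry component while the exit component is still protected by (C1), then linearly interpolating the exit component to $\pm 2p$ along the convex half-lines guaranteed by (C2)---is exactly the standard argument, and your closing remark correctly identifies why a single-stage interpolation could fail. One very minor point: when you assert that the complement of the open set in (C1) is $[-1,1]\times\{q:\vectornorm{q}\ge 1\}$, it is worth noting explicitly (as you implicitly use) that the interior operation is genuinely needed, since the raw union in (C1) is not open; your computation of the interior as $\{(a,b):|a|>1\text{ or }\vectornorm{b}<1\}$ is correct.
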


\begin{rem}\label{rem:covlemma}
In applications it is convenient to introduce the notation $X^{-,l} = c_{X}^{-1}(\{-1\} \times \overline{B_{s}(0,1)})$ (\emph{the left exit edge})
and $X^{-,r} = c_{X}^{-1}(\{1\} \times \overline{B_{s}(0,1)})$ (\emph{the right exit edge}) and check (C1), (C2)
by putting
\begin{equation}
  \begin{aligned}
    g_{c}(X_{c}) &= (c_{Y} \circ g)(|X|),\\
    g_{c}(\{-1\} \times \overline{B_{s}(0,1)}) &= (c_{Y} \circ g) ( X^{-,l} ),\\
    g_{c}(\{1\} \times \overline{B_{s}(0,1)}) &= (c_{Y} \circ g) ( X^{-,r} ),
  \end{aligned}
\end{equation}
see Figure~\ref{coveringFig}.
\end{rem}

\begin{figure}
    \centering
      \begin{tikzpicture}[line cap=round,line join=round,>=latex,x=2.5mm,y=2.5mm]
 
  \clip(-19,-8.4) rectangle (19,7);

  \fill [color=brown!50] (-5.,-5.) -- (-5.,5.) -- (5.,5.) -- (5.,-5.);
  \draw [color=orange,thick] (-5.,5.) -- (-5.,-5.);
  \draw [color=orange,thick] (5.,5.) -- (5.,-5.);
  \draw [color=brown,thick] (-5.,5.) -- (5.,5.);
  \draw [color=brown,thick] (5.,-5.) -- (-5.,-5.);

  \filldraw[draw=blue, fill=blue!50, opacity=0.8]
      (-9,-4) .. controls (-2,1) and (3,-4) .. (11,-1) -- (10,3) .. controls (3,0) and (-2,5) .. (-10,0) -- (-9,-4);

  \draw [color=red,thick] (11,-1) -- (10,3);
  \draw [color=red,thick] (-10,0) -- (-9,-4);

  \draw (3.,-5.) node[anchor=south] {\scriptsize{$\displaystyle Y_{c}$}};
  \draw (0.,1.) node[anchor=north] {\scriptsize{$\displaystyle(c_{Y} \circ g)\left(|X|\right)$}};
  \draw (10.5,1.) node[anchor=west] {\scriptsize{$\displaystyle(c_{Y} \circ g)(X^{-,r})$}};
  \draw (-9.5,-2.) node[anchor=east] {\scriptsize{$\displaystyle(c_{Y} \circ g)(X^{-,l})$}};

  \draw[color=black,->] (-7.,-7.) -- (-5,-7.);
  \draw[color=black,->] (-7.,-7.) -- (-7,-5.);
  \draw (-7.,-6) node[anchor=east] {\scriptsize{$x_{s}$}};
  \draw (-6.,-7) node[anchor=north] {\scriptsize{$x_{u}$}};

\end{tikzpicture}
      \caption{A covering relation $X \cover{g} Y$.}
      \label{coveringFig}
\end{figure}

Analogously, if for an h-set $Y$ we have $u(Y)=u$ and $s(Y)=1$, then
\begin{equation}
  \begin{aligned}
    Y_{c} &= \overline{B_{u}(0,1)} \times [-1,1], \\
    Y_{c}^{+} &= (\overline{B_{u}(0,1)} \times \{-1\}) \cup (\overline{B_{u}(0,1)} \times \{1\}),
  \end{aligned}
\end{equation}
and we can apply the same principle to transposed sets:
\begin{lem}\label{backcovlemma}
  Let $X,Y$ be h-sets with $u(X)=u(Y)=u$ and $s(X)=s(Y)=1$. Let $\Omega \subset \rr^{u+1}$ and $g: \Omega \to \rr^{u+1}$ be continuous.
  Assume, that $g^{-1}: |Y| \to \rr^{u+1}$ exists, is continuous and that both of the following conditions hold:
  \begin{itemize}
    \item[(C1a)] We have
      \begin{equation}
        g^{-1}_{c}(Y_{c}) \subset \inter( ( \rr^{u} \times (-\infty, -1) ) \cup X_{c} \cup (\rr^{u} \times (1, \infty)) );
      \end{equation}
    \item[(C2a)] either
      \begin{equation}
        \begin{aligned}
          g^{-1}_{c}( \overline{B_{u}(0,1)} \times \{-1\} ) \subset \rr^{u} \times (-\infty, -1) &\text{ and }g^{-1}_{c}(\overline{B_{u}(0,1)} \times \{1\}) \subset \rr^{u} \times (1,\infty) \\
          &\text{or} \\
          g^{-1}_{c}( \overline{B_{u}(0,1)} \times \{-1\}) \subset \rr^{u} \times (1,\infty) \text{ and } &g^{-1}_{c}(\overline{B_{u}(0,1)} \times \{1\}) \subset \rr^{u} \times(-\infty, -1) .
        \end{aligned}
      \end{equation}
  \end{itemize}
  Then
  \begin{equation}
   X \backcover{g} Y.
  \end{equation}
\end{lem}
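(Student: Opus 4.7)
The plan is to reduce Lemma \ref{backcovlemma} to the already established Lemma \ref{covlemma} by transposing h-sets and passing to the inverse map. By definition, $X \backcover{g} Y$ means $g^{-1}: |Y| \to \rr^{u+1}$ is continuous (which we assume) and $Y^T \cover{g^{-1}} X^T$. Since $u(Y^T) = s(Y) = 1 = s(X) = u(X^T)$ and $s(Y^T) = u(Y) = u = s(X^T)$, the dimensional requirements of Lemma \ref{covlemma} are met for the pair $(Y^T, X^T)$ with map $g^{-1}$.

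First I would translate the chart data. With $j(p,q) = (q,p)$, one has $c_{X^T} = j \circ c_X$, $c_{Y^T} = j \circ c_Y$, so $(Y^T)_c = j(Y_c) = [-1,1] \times \overline{B_u(0,1)}$ and $(X^T)_c = j(X_c) = [-1,1] \times \overline{B_u(0,1)}$ as well. The map in chart coordinates for the covering $Y^T \cover{g^{-1}} X^T$ is
\begin{equation}
 (g^{-1})_c^{\mathrm{tr}} := c_{X^T} \circ g^{-1} \circ c_{Y^T}^{-1} = j \circ g^{-1}_c \circ j^{-1},
\end{equation}
where $g^{-1}_c = c_X \circ g^{-1} \circ c_Y^{-1}$ is the chart map from condition (C1a).

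Next I would check that conditions (C1a), (C2a) for $(X,Y,g)$ are exactly conditions (C1), (C2) for $(Y^T, X^T, g^{-1})$. Applying $j$ to both sides of the inclusion in (C1a), the set $\rr^u \times (-\infty,-1)$ becomes $(-\infty,-1) \times \rr^u$, the set $X_c$ becomes $(X^T)_c$, and $g^{-1}_c(Y_c)$ becomes $(g^{-1})_c^{\mathrm{tr}}((Y^T)_c)$; this is precisely (C1) for the transposed data. Similarly, the exit edges of $Y^T$ are $\{\pm 1\} \times \overline{B_u(0,1)} = j\bigl(\overline{B_u(0,1)} \times \{\pm 1\}\bigr)$, so the two alternatives in (C2a) turn into the two alternatives in (C2).

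Finally, Lemma \ref{covlemma} applied to $(Y^T, X^T, g^{-1})$ yields $Y^T \cover{g^{-1}} X^T$, which by definition is $X \backcover{g} Y$. The main obstacle is purely bookkeeping: one has to be careful that the homotopy used implicitly by Lemma \ref{covlemma} is transported correctly under the conjugation by $j$, but since $j$ is a linear homeomorphism of $\rr^{u+1}$ preserving the relevant product decompositions, this step is routine. No new topological input is needed beyond Lemma \ref{covlemma}.
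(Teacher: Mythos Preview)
Your proposal is correct and matches the paper's approach: the paper does not give a detailed proof of this lemma but simply remarks that one ``can apply the same principle to transposed sets,'' which is exactly the reduction to Lemma~\ref{covlemma} via $Y^T \cover{g^{-1}} X^T$ that you carry out. Your final caveat about transporting the homotopy is unnecessary, since once (C1) and (C2) are verified for the transposed data you invoke Lemma~\ref{covlemma} as a black box and no homotopy needs to be conjugated by hand.
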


In such case we will sometimes operate with the notation $Y^{+,l} = c_{Y}^{-1}( \overline{B_{u}(0,1)} \times \{-1\} )$ (\emph{the left entrance edge})
and $Y^{+,r} =  c_{Y}^{-1}( \overline{B_{u}(0,1)} \times \{1\} )$ (\emph{the right entrance edge}).
Conditions (C1a) and (C2a) can then be rephrased in the same manner as in Remark~\ref{rem:covlemma}.

\subsection{Covering relations and Poincar{\'e} maps}\label{subsec:poinc}

In this subsection we will consider an ODE
\begin{equation}\label{ODE}
  \begin{aligned}
    \dot{x} &= f(x), \\
    \ x &\in \rr^{N}
  \end{aligned}
\end{equation}
given by a smooth vector field $f$, and describe how h-sets and covering relations can be used in such setting.

Assume, that we are given a diffeomorphism $\Phi: \rr^N \to \rr^N$, 
and let $\Sigma \subset \rr^{N}$ be a subset of the hypersurface $\Xi:=\Phi^{-1}( \{0\} \times \rr^{N-1} )$.
A point $x \in \Xi$ is \emph{regular} iff $\langle n(x), f(x) \rangle \neq 0$, where $n(x)$ is a normal to $\Xi$ at $x$.
If every point $x \in \Sigma$ is regular, then we will say that $\Sigma$ is a \emph{transversal section}.

For a given $x_{0} \in \rr^{N}$ we will denote by
\begin{equation}\label{flow}
  \varphi(t,x_{0})
\end{equation}
the local flow generated by $f$, that is the value of the solution $x(t)$
to~\eqref{ODE} with the initial condition $x(0)=x_{0}$.

Let $\Sigma_{1},\ \Sigma_{2}$ be two transversal sections such that:
\begin{itemize}
  \item either $\Sigma_1 \subset \Sigma_2$ or $\Sigma_1 \cap \Sigma_2 = \emptyset$,
  \item we have $\overline{\inter \Sigma_1} = \Sigma_1$ and $\overline{\inter \Sigma_2} = \Sigma_2$, where closures and interiors are taken in
    the hypersurface topology,
  \item for each $x \in \Sigma_1$ there exists a $\tau > 0$ such that $\varphi(\tau,x) \in \Sigma_2$
\end{itemize}
It is well known that the \emph{Poincar\'e map}:
\begin{equation}\label{poincareMap}
  P : \Sigma_1 \ni x \to  \inf_{\tau: \varphi(\tau,x) \in \Sigma_{2}} \varphi(\tau,x) \in \Sigma_{2}
\end{equation}
is well-defined and smooth for points $x \in \inter \Sigma_1$ such that $P(x) \in \inter \Sigma_2$ (interiors in the hypersurface topology).
The proof can be found in e.g.~\cite{Kapela}.

To make the formulation of some theorems in future shorter, we extend the above definition of a Poincar\'e map
in the scenario $\Sigma_1 \subset \Sigma_2$ to also cover the embedding by identity $\id : \Sigma_1 \to \Sigma_2$.
In such case we will always specifically refer to such map as the identity map, to differentiate from a Poincar\'e map $P$ given by~\eqref{poincareMap}.

For such a Poincar\'e map we define the h-sets in a natural manner. We can identify $\Sigma_{1}, \Sigma_{2}$ with two copies
of $\rr^{N-1}$. Then we can proceed to describe the h-sets on each of these copies - note that they will be h-sets
in $\rr^{N-1}$, not $\rr^{N}$.

\begin{rem}\label{rem:hset}
  Treating h-sets as subsets of sections is a slight abuse when compared to Definition~\ref{h-set}, where they were subsets of 
  the Euclidean space $\rr^{N}$.
  Nevertheless, we can always compose the change of coordinates homeomorphism for the h-set with the global coordinate frame on the section
  to get back to the Euclidean space.
  Therefore, given a section $\Sigma \subset \rr^{N}$, for an h-set $X \subset \Sigma$ the actual coordinate
  change will take the form $c_{X} =  \tilde{c}_{X} \circ \Phi$,
  where $\Phi : \Sigma \to \{0\} \times \rr^{N-1}$ is the global coordinate frame for the section and $\tilde{c}_{X} : \rr^{N-1} \to \rr^{u(X)} \times \rr^{s(X)}$
  is a coordinate change homeomorphism satisfying \eqref{h-set:1}.
\end{rem}

\subsection{Isolating segments}\label{subsec:segments}

Assume we are given a smooth vector field~\eqref{ODE}, an associated local flow~\eqref{flow}
and a pair of transversal sections $\Sigma_{\text{in}}, \ \Sigma_{\text{out}}$.

\begin{defn}
  A segment between two transversal sections $\Sigma_{\text{in}}$ and $\Sigma_{\text{out}}$ is formed by a quadruple $S = (|S|, u(S), s(S), c_{S})$,
  consisting of a compact set $|S| \subset \rr^{N}$ (\emph{the support}), a pair of numbers $u(S), s(S) \in \mathbb{N}$
  with $u(S) + s(S) = N-1$ (the number of exit and entrance directions, respectively)
  and a coordinate change diffeomorphism $c_{S}: \rr^{N} \to \rr^{u(S)} \times \rr^{s(S)} \times \rr$ such that:
  \begin{equation}
    \begin{aligned}
      c_{S}(|S|) &= \overline{B_{u(S)}(0,1)} \times \overline{B_{s(S)}(0,1)} \times [0,1], \\
      c_{S}^{-1}( \overline{B_{u(S)}(0,1)} \times \overline{B_{s(S)}(0,1)} \times \{0\} ) &\subset \Sigma_{\text{in}}, \\
      c_{S}^{-1}( \overline{B_{u(S)}(0,1)} \times \overline{B_{s(S)}(0,1)} \times \{1\} ) &\subset \Sigma_{\text{out}} . \\
    \end{aligned}
  \end{equation}

  We set:
  \begin{equation}
    \begin{aligned}
      S_{c}&:=\overline{B_{u(S)}(0,1)} \times \overline{B_{s(S)}(0,1)} \times [0,1], \\
      S_{c}^{-}&:=\partial B_{u(S)}(0,1) \times \overline{B_{s(S)}(0,1)} \times [0,1], \\
      S_{c}^{+}&:=\overline{B_{u(S)}(0,1)} \times \partial B_{s(S)}(0,1) \times [0,1], \\
      S^{-} &:= c_{S}^{-1}(S_{c}^{-}), \\
      S^{+} &:= c_{S}^{-1}(S_{c}^{+}).
    \end{aligned}
  \end{equation}

  We will refer to $S^{-}$/$S^{+}$ as \emph{the exit/entrance sets}, respectively.
  Again, to shorten the notation sometimes we will drop the bars in the symbol $|S|$ and just write $S$ to denote both the
  segment and its support.
\end{defn}

\begin{rem}
  As with h-sets, it is enough to give $u(S)$, $s(S)$ and $c_{S}$ to define a segment $S$.
\end{rem}

Given a segment $S$ we introduce the following notation for projections:

\begin{equation}
  \begin{aligned}
    \pi_{u}: \rr^{u(S)} \times \rr^{s(S)} \times \rr \ni (x_{u},x_{s},x_{\mu}) &\to x_{u} \in \rr^{u(S)}, \\
    \pi_{s}: \rr^{u(S)} \times \rr^{s(S)} \times \rr \ni (x_{u},x_{s},x_{\mu}) &\to x_{s} \in \rr^{s(S)}, \\
    \pi_{\mu}: \rr^{u(S)} \times \rr^{s(S)} \times \rr \ni (x_{u},x_{s},x_{\mu}) &\to x_{\mu} \in \rr.
  \end{aligned}
\end{equation}

\begin{defn}\label{defn:isegment}
  We say that $S$ is an isolating segment between two transversal sections $\Sigma_{\text{in}}$ and $\Sigma_{\text{out}}$
  if $S$ is a segment,
  the functions $x \to \vectornorm{\pi_{u}(x)}$, $x \to \vectornorm{\pi_{s}(x)}$, $x \to \vectornorm{\pi_{\mu}(x)}$,
  $x \in \rr^{u(S)} \times \rr^{s(S)} \times \rr$ are smooth everywhere except at $0$
  and the following conditions are satisfied:
  \begin{itemize}
    \item[(S1)] $\frac{d}{dt} \pi_{\mu} c_{S}( \varphi(t,x) )_{|_{t=0}} > 0$ for all $x \in |S|$ (\emph{monotonicity}),
    \item[(S2)] $\frac{d}{dt} \vectornorm{ \pi_{u} c_{S}( \varphi(t,x) ) }_{|_{t=0}} > 0$ for all $x \in S^{-}$ (\emph{exit set isolation}),
    \item[(S3)] $\frac{d}{dt} \vectornorm{ \pi_{s} c_{S}( \varphi(t,x) ) }_{|_{t=0}} < 0$ for all $x \in S^{+}$ (\emph{entrance set isolation}).
  \end{itemize}
\end{defn}

As one can see, our definition of an isolating segment $S$ relies on splitting the phase space into:
\begin{itemize}
  \item the \emph{exit} directions $\pi_{u} \circ c_{S}$,
  \item the \emph{entry} directions $\pi_{s} \circ c_{S}$,
  \item the one-dimensional \emph{central} direction $\pi_{\mu} \circ c_{S}$.
\end{itemize}
In that sense, it is a simplification of the concept of periodic isolating segments in nonautonomous systems, as originally introduced in~\cite{SrzednickiWojcik}
(also, under a name of periodic isolating blocks in~\cite{Srzednicki}),
where a wider range of boundary behavior was considered.
On the other hand, contrary to~\cite{SrzednickiWojcik}, we are able to work with an autonomous ODE
-- in~\cite{SrzednickiWojcik} the central direction had to be given by time.

When introducing an isolating segment we will sometimes omit specifying the transversal sections, as they are implicitly defined by $c_{S}$.

\begin{rem}\label{alternativeS}
  Each of the conditions (S1)-(S3) is equivalent to its following counterpart:
  \begin{itemize}
    \item[(S1a)] $\langle \nabla (\pi_{\mu} \circ c_{S})(x), f(x) \rangle > 0$ for all $x \in |S|$,
    \item[(S2a)] $\langle \nabla \vectornorm{ \pi_{u} \circ c_{S} }(x), f(x) \rangle > 0$ for all $x \in S^{-}$,
    \item[(S3a)] $\langle \nabla \vectornorm{ \pi_{s} \circ c_{S} }(x), f(x) \rangle < 0$ for all $x \in S^{+}$.
  \end{itemize}

  Since $S^{-}$, $S^{+}$ are subsets of the level sets $\{ x \in \rr^{N}: \vectornorm{ \pi_{u} \circ c_{S}(x) } = 1 \}$,
  $\{ x \in \rr^{N}: \vectornorm{ \pi_{s} \circ c_{S}(x) } = 1 \}$, respectively, and gradients are normals to level sets,
  (S2a) and (S3a) can also be rewritten as:
  \begin{itemize}
    \item[(S2b)] $\langle n_{-}(x), f(x) \rangle > \const > 0$ for all $x \in S^{-}$,
    \item[(S3b)] $\langle n_{+}(x), f(x) \rangle < \const < 0$ for all $x \in S^{+}$,
  \end{itemize}
  respectively, 
  where $n_{\mp}(x)$ are normals to $S^{\mp}$, pointing in the outward direction of $|S|$\footnote{The sets $S^{\mp}$ are manifolds with boundary, so by normals
  at the boundary points we mean normals to any smooth extension of $S^{\mp}$ to a manifold without boundary.}.
\end{rem}

In our applications the faces of segments will always lie in affine subspaces,
hence conditions (S2b) and (S3b) are easy to check by an explicit computation.
Let $\pi_i$ be the projection onto $i$-th variable, $i \in 1,\dots,N$.
In the central direction our changes of coordinates will take an affine form
\begin{equation}\label{eq:centralform}
  \pi_{\mu} c_{S}(x) = a \pi_i (x) + b,
\end{equation}
for $a \neq 0,\ b \in \rr$.
In that situation (S1a) is equivalent with
\begin{equation}\label{eq:sgna}
  \sgn(a) \pi_i (f(x)) > 0, \ \forall x \in |S|,
\end{equation}
which again is easily established. In particular, if the sign of $\pi_i(f(x))$ is negative,
one needs to orient the segment in the direction reverse to the $i$-th coordinate direction
by giving $a$ a negative sign.

We will now introduce the notion of the transposed segment, analogous to the transposed h-set.

\begin{defn}
  Given a segment $S$ between two transversal sections $\Sigma_{\text{in}}$ and $\Sigma_{\text{out}}$ we define the
  transposed segment $S^{T}$ between $\Sigma_{\text{out}}$ and $\Sigma_{\text{in}}$ by setting:
  \begin{equation}
    \begin{aligned}
      |S^{T}| &:= |S|, \\
      u(S^{T}) &:= s(S), \\
      s(S^{T}) &:= u(S), \\
      c_{S^{T}} &:= {\rm o} \circ c_{S};
    \end{aligned}
  \end{equation}
  where ${\rm o}: \rr^{u(S)} \times \rr^{s(S)} \times \rr \to \rr^{u(S^{T})} \times \rr^{s(S^{T})} \times \rr$, ${\rm o}(p,q,r) = (q,p,1-r)$.

  Observe that
  \begin{equation}
    \begin{aligned}
      (S^{T})^{-} &= S^{+}, \\
      (S^{T})^{+} &= S^{-}.
    \end{aligned}
  \end{equation}

\end{defn}

\begin{prop}\label{ts:1}
  Let $S$ be an isolating segment between transversal sections $\Sigma_{\text{in}}$ and $\Sigma_{\text{out}}$ for $\dot{x} = f(x)$.
  Then $S^{T}$ is an isolating segment between $\Sigma_{\text{out}}$ and $\Sigma_{\text{in}}$ for the inverted vector field $\dot{x} = -f(x)$.
  The sections $\Sigma_{\text{out}}$ and $\Sigma_{\text{in}}$ are transversal for the inverted vector field.
\end{prop}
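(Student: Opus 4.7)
The plan is to check each clause of the statement directly from the definitions, exploiting the relation $\varphi_{-f}(t,x)=\varphi_{f}(-t,x)$ between the flows of $f$ and $-f$.

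First I would verify that $S^{T}$ is indeed a segment between $\Sigma_{\text{out}}$ and $\Sigma_{\text{in}}$. Since ${\rm o}$ permutes the first two factors and sends $r\mapsto 1-r$, it is a diffeomorphism of $\rr^{u(S)}\times \rr^{s(S)}\times\rr$ carrying $\overline{B_{u(S)}(0,1)}\times \overline{B_{s(S)}(0,1)}\times[0,1]$ to $\overline{B_{s(S)}(0,1)}\times \overline{B_{u(S)}(0,1)}\times[0,1]$ and swapping the faces $r=0$ and $r=1$. Composing with $c_{S}$ then gives a coordinate change witnessing the segment property for $S^{T}$ with the $\Sigma_{\text{in}}/\Sigma_{\text{out}}$ roles exchanged. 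Transversality of the sections for $-f$ is immediate: for $x\in\Sigma_{\text{in}}\cup\Sigma_{\text{out}}$ we have $\langle n(x),-f(x)\rangle=-\langle n(x),f(x)\rangle\neq 0$.

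Next I would check (S1)-(S3) for $S^{T}$ under $-f$, reformulated via Remark~\ref{alternativeS}. Writing $c_{S}=(c_{u},c_{s},c_{\mu})$, the definition of ${\rm o}$ gives $\pi_{u}\circ c_{S^{T}}=c_{s}$, $\pi_{s}\circ c_{S^{T}}=c_{u}$, and $\pi_{\mu}\circ c_{S^{T}}=1-c_{\mu}$. For the monotonicity condition for $S^{T}$ at a point $x\in|S^{T}|=|S|$, I compute
\begin{equation*}
\langle \nabla(\pi_{\mu}\circ c_{S^{T}})(x), -f(x)\rangle \;=\; \langle -\nabla c_{\mu}(x), -f(x)\rangle \;=\; \langle \nabla c_{\mu}(x), f(x)\rangle \;>\; 0,
\end{equation*}
where the last inequality is (S1a) for $S$. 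For the exit set isolation for $S^{T}$, note that $(S^{T})^{-}=S^{+}$ and $\vectornorm{\pi_{u}\circ c_{S^{T}}}=\vectornorm{\pi_{s}\circ c_{S}}$, so at any $x\in(S^{T})^{-}=S^{+}$,
\begin{equation*}
\langle \nabla \vectornorm{\pi_{u}\circ c_{S^{T}}}(x), -f(x)\rangle \;=\; -\langle \nabla \vectornorm{\pi_{s}\circ c_{S}}(x), f(x)\rangle \;>\; 0,
\end{equation*}
by (S3a) for $S$. The entrance set isolation for $S^{T}$ follows in the same way from (S2a) for $S$, using $(S^{T})^{+}=S^{-}$.

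I do not expect any substantial obstacle: the argument is a bookkeeping exercise in which the two sign reversals (from ${\rm o}$ reversing the central coordinate, and from replacing $f$ by $-f$) interact cleanly with the swap $(u(S),s(S))\leftrightarrow(s(S),u(S))$. The only thing to watch is that the gradients $\nabla\vectornorm{\pi_{u}\circ c_{S}}$ and $\nabla\vectornorm{\pi_{s}\circ c_{S}}$ are well defined on $S^{-}$ respectively $S^{+}$, which is guaranteed by the smoothness clause of Definition~\ref{defn:isegment} (the norms are smooth away from the origin, and on the boundary faces the relevant projection is nonzero).
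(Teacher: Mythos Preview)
Your proof is correct and is precisely the direct verification the paper has in mind; the paper states Proposition~\ref{ts:1} without proof, treating it as an immediate consequence of the definitions, and your computation with the two sign reversals (from ${\rm o}$ and from $f\mapsto -f$) combined with the swap of exit and entry roles is exactly the intended bookkeeping. One very minor omission: you might also note that the smoothness requirement in Definition~\ref{defn:isegment} (that the norm functions $x\mapsto\vectornorm{\pi_u(x)}$, $\vectornorm{\pi_s(x)}$, $\vectornorm{\pi_\mu(x)}$ be smooth away from the origin) is inherited by $S^{T}$ since the norms on $\rr^{u(S^{T})}=\rr^{s(S)}$ and $\rr^{s(S^{T})}=\rr^{u(S)}$ are the same norms assumed smooth for $S$.
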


\subsection{Isolating segments imply coverings}

Given a segment $S$ between transversal sections $\Sigma_{\text{in}}$ and $\Sigma_{\text{out}}$ there is a natural structure of h-sets defined
on the faces given by intersections $\Sigma_{\text{in}} \cap |S|$ and $\Sigma_{\text{out}} \cap |S|$.

\begin{defn}
  We define the h-sets:
  \begin{itemize}
    \item $X_{S,\text{in}} \subset \Sigma_{\text{in}}$ (\emph{the front face}),
    \item $X_{S,\text{out}} \subset \Sigma_{\text{out}}$ (\emph{the rear face}),
  \end{itemize}
  as follows:
  \begin{itemize}
    \item $u( X_{S,\text{in}} ) = u( X_{S,\text{out}} ) := u(S)$ and $s( X_{S,\text{in}} ) = s( X_{S,\text{out}} ) := s(S)$;
    \item $|X_{S, \text{in}}| =\Sigma_{\text{in}} \cap |S|$ and $|X_{S, \text{out}}| = \Sigma_{\text{out}} \cap |S|$;
    \item $c_{ X_{S,\text{in}} } := (\pi_{u},\pi_{s}) \circ {c_{S}}_{|_{\Sigma_{\text{in}}}}$ 
      and $c_{ X_{S,\text{out}} } := (\pi_{u},\pi_{s}) \circ {c_{S}}_{|_{\Sigma_{\text{out}}}}$.
  \end{itemize}

\end{defn}

\begin{defn}
  Let $S$ be an isolating segment. We define \emph{the exit map} $E_{S}: |X_{S,\text{in}}| \to S^{-} \cup |X_{S,\text{out}}|$ by
  \begin{equation}
   E_{S}(x)=\varphi(t_{e},x), \ t_{e} = \min \left\{ t \in \rr^{+} \cup \{0\}: \ \varphi(t,x) \in S^{-} \cup |X_{S,\text{out}}| \right\},
  \end{equation}
  and \emph{the persistent set} by
  \begin{equation}
   S^{0} := \{ x \in |X_{S,\text{in}}|: E_{S}(x) \in |X_{S,\text{out}}| \}.
  \end{equation}
\end{defn}

\begin{rem}
  From (S1), (S2), (S3) it follows that the function $E_{S}$ is well-defined and a homeomorphism onto its image. 
\end{rem}

\begin{thm}\label{is:1}
  Let $S$ be an isolating segment between transversal sections $\Sigma_{\text{in}}$ and $\Sigma_{\text{out}}$.
  Define $V := \{x \in \Sigma_{\text{in}}: \exists \tau > 0 : \varphi(\tau,x) \in \Sigma_{\text{out}} \}$
  and a Poincar\'e map $P: V \to \Sigma_{out}$ as in equation~\eqref{poincareMap}. Then
  \begin{itemize}
    \item $V \neq \emptyset$;
    \item there exists a diffeomorphism $R: \Sigma_{\text{in}} \to \Sigma_{\text{out}}$ such that we have a covering relation
      \begin{equation}
        X_{S, \text{in}} \cover{R} X_{S, \text{out}}
      \end{equation}
    and
    \begin{equation}\label{R=P}
      P(x) = R(x) \quad \forall x \in S^{0};
    \end{equation}
    \item it holds that 
      \begin{equation}\label{S0=RSet}
        S^{0}= \{ x \in |X_{S,\text{in}}|: R(x) \in |X_{S,\text{out}}| \}.
      \end{equation}
      In particular, for every $x \in |X_{S, \text{in}}|$ such that $R(x) \in |X_{S, \text{out}}|$ the part of the trajectory
      between $x$ and $P(x)=R(x)$ is contained in $|S|$.
  \end{itemize}
\end{thm}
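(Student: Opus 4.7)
The strategy is to pass to the straightened coordinates via $c_S$, use the monotonicity in the central direction to build the exit map, and then define $R$ by composing the Poincar\'e map on the persistent set with an ``outward push'' on the remaining orbits.

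First, in $c_S$-coordinates the segment is $\overline{B_{u(S)}(0,1)} \times \overline{B_{s(S)}(0,1)} \times [0,1]$, and by (S1a) together with compactness of $|S|$ there is a uniform lower bound $c > 0$ on $\tilde f^\mu$, where $\tilde f := (c_S)_\ast f$. Hence every orbit starting on the front face leaves the segment in time at most $1/c$; by (S3), no orbit can exit through $S^+$, so the exit point must belong to $S^- \cup |X_{S,\text{out}}|$. Continuity of $E_S$ on $|X_{S,\text{in}}|$, together with smoothness on the open preimages of $\inter S^-$ and of $\inter |X_{S,\text{out}}|$, follows from the implicit function theorem applied to the defining functions of these two pieces.

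Next, writing $(p',q',\mu') := c_S(E_S(c_S^{-1}(p,q,0)))$, I set
\begin{equation}
R_c(p,q) := ((2 - \mu')\,p',\; q').
\end{equation}
When the orbit persists, $\mu' = 1$ and $\|p'\| \leq 1$, so $R_c(p,q) = (p',q')$ coincides with the Poincar\'e map in coordinates; when it exits through $S^-$, $\|p'\| = 1$ and hence $\|(R_c)_u\| = 2 - \mu' \geq 1$. This yields \eqref{S0=RSet} and \eqref{R=P} directly. To promote $R_c$ to a smooth diffeomorphism of $\Sigma_{\text{in}}$ onto $\Sigma_{\text{out}}$, I mollify it across the piecewise-smooth interface by replacing $\mu'$ with a smoothed exit-time function that coincides with $\mu'$ off a thin transition strip, and extend outside the h-set by a cutoff interpolation to a fixed affine diffeomorphism. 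The covering relation $X_{S,\text{in}} \cover{R} X_{S,\text{out}}$ is then verified using the linear homotopy $h_t(p,q) := (1-t)R_c(p,q) + t\,(2p,0)$: on the exit edge the $u$-component has norm $\geq 2 > 1$ for every $t \in [0,1]$; the $s$-component $(1-t)q'$ has norm strictly less than $1$ (after a small $s$-shrinking of the h-set if needed to handle corner points); and $h_1(p,q) = (2p,0)$ is a linear map with $\deg(0, 2\id, B_{u(S)}(0,1)) = 1$. Finally, $V \neq \emptyset$ is an immediate consequence of Theorem~\ref{cov:sequence} applied to this single covering together with \eqref{S0=RSet} and \eqref{R=P}.

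The delicate part is the smoothing of $R_c$: the exit map $E_S$ is only piecewise smooth across the interface between orbits exiting through $S^-$ and orbits persisting to $|X_{S,\text{out}}|$, and the formula above inherits this defect. The mollification must be arranged so that $R_c$ still equals the Poincar\'e map on \emph{all} of $S^0$, not merely on its interior; and the small $s$-direction shrinking is required near the corner locus $\partial B_{u(S)}(0,1) \times \partial B_{s(S)}(0,1) \times \{0\}$ (where exit and entrance edges meet and the na\"ive formula gives $\|q'\| = 1$) in order to keep condition \eqref{cover:1c} strict throughout the homotopy. Once these technical adjustments are made, the remaining verifications are routine.
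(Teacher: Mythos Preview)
Your approach is genuinely different from the paper's. The paper never constructs $R$ from the exit map; instead, working in $c_S$-coordinates, it blends $f$ with the linear model field $g(x)=(\pi_u x,\,-\pi_s x,\,1)$ using a cutoff supported in a neighbourhood of $|S|$, and defines $R$ as the Poincar\'e map of the resulting global flow between the level sets $\{\pi_\mu=0\}$ and $\{\pi_\mu=1\}$. Since $\pi_\mu$ is a global Lyapunov function for the modified flow, this $R$ is automatically a smooth diffeomorphism of the whole section, and the covering homotopy is realised by the one-parameter family of Poincar\'e maps for the convex combinations $(1-\xi)\hat f+\xi g$. The agreement $R=P$ on $S^0$ and the characterisation \eqref{S0=RSet} then follow from forward invariance of the regions $\{\|\pi_u\|\ge 1\}$ and $\{\|\pi_s\|\le 1\}$ under the modified flow. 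Your explicit formula $R_c(p,q)=((2-\mu')p',q')$ is more direct and, as you correctly argue, yields the covering relation and the two equalities without difficulty.

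The gap in your argument is the promotion of $R_c$ to a smooth diffeomorphism $\Sigma_{\text{in}}\to\Sigma_{\text{out}}$, which the statement of the theorem requires. Both the mollification across $\partial S^0$ and the ``cutoff interpolation to a fixed affine diffeomorphism'' outside the h-set are nontrivial claims that you do not actually carry out: convex interpolation between two injective maps is generically not injective, and a smoothing of the piecewise formula must simultaneously preserve injectivity, nondegeneracy of the Jacobian, and the exact equality with $P$ on all of $S^0$. The paper's vector-field approach bypasses this completely, since a Poincar\'e map of a smooth flow between two transversal sections is a diffeomorphism by construction. For the downstream applications (Theorems~\ref{thm:1} and~\ref{thm:2}) a merely continuous $R$ with the covering relation would in fact suffice, and your construction already gives that; but the theorem as stated is not proved.

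A minor point: your worry about the corner locus and the $s$-shrinking is unnecessary. At corner points the exit time is zero, so $R_c(p,q)=(2p,q)$ has $\|\pi_u R_c\|=2>1$ and already misses $Y_c^+$; for $t>0$ one has $\|(1-t)q'\|<1$ everywhere, so condition~\eqref{cover:1c} holds without any shrinking.
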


The intuition behind this theorem is portrayed in Figure~\ref{segmentCovFig}.

\begin{figure}
    \centering
      \input{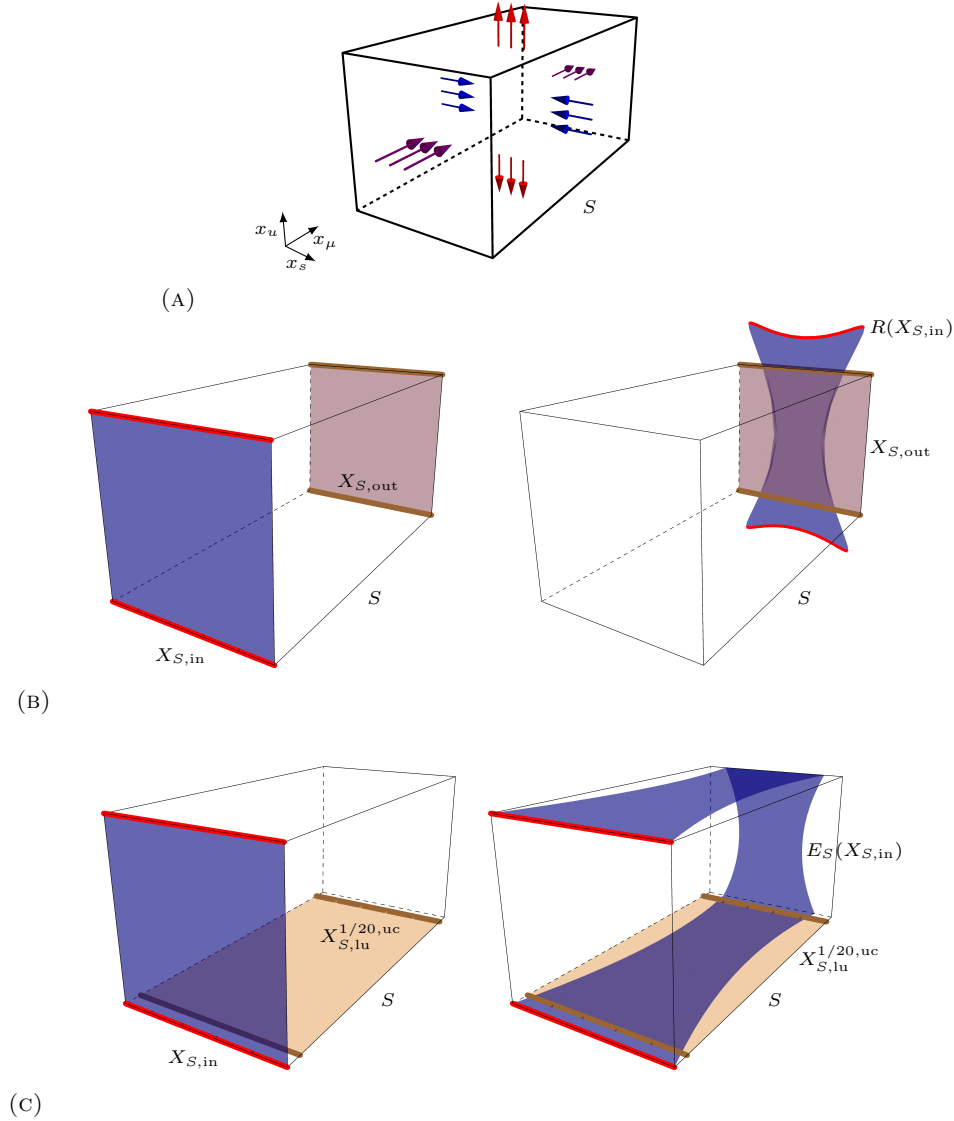}
      \caption{Schematic drawings of covering relations in a three-dimensional isolating segment $S$ with $u(S)=s(S)=1$.}
      \label{segmentCovFig}
\end{figure}

\begin{proof}
  To make the formulas clearer, without any loss of generality we assume that $c_{S}=\id_{\rr^{N}}$. Define
  \begin{equation}
    g(x) = \frac{d}{dt} \tilde{\varphi}( t, x )_{|_{t=0}},
  \end{equation}
  where
  \begin{equation}
    \tilde{\varphi}(t,x) = \left( \begin{array}{c} e^{t} \pi_{u}(x) \\ e^{-t} \pi_{s}(x) \\ t + \pi_{\mu}(x) \end{array} \right)
  \end{equation}
  is a global flow on $\rr^{u(S)} \times \rr^{s(S)} \times \rr$.

  We have
    \begin{align}
      \langle \nabla \vectornorm{ \pi_{u} (x) }, g(x) \rangle &= \frac{d}{dt} \vectornorm{ \pi_{u} \tilde{\varphi}(t, x ) }_{|_{t=0}} = \vectornorm{ \pi_{u} (x) },\label{eq:ga} \\
      \langle \nabla \vectornorm{ \pi_{s} (x) }, g(x) \rangle &= \frac{d}{dt} \vectornorm{ \pi_{s} \tilde{\varphi}(t, x ) }_{|_{t=0}} = -\vectornorm{ \pi_{s} (x) },\label{eq:gb}
    \end{align}
  for all $x \neq 0$ and   
  \begin{equation}
    \langle \nabla \pi_{\mu} (x), g(x) \rangle = \frac{d}{dt} \pi_{\mu} \tilde{\varphi}(t, x )_{|_{t=0}} = 1,\label{eq:gc}
  \end{equation}
  for all $x \in \rr^{N}$.

  Let $U$ be a bounded, open neighborhood of $|S|$, small enough so that the following conditions are satisfied:
    \begin{align}
      \langle \nabla \vectornorm{ \pi_{u} (x) }, f(x) \rangle > \const > 0 \ &\forall x \in U: \vectornorm{ \pi_{u} (x) } \geq 1, \label{eq:fa}\\
      \langle \nabla \vectornorm{ \pi_{s} (x) }, f(x) \rangle < \const < 0 \ &\forall x \in U: \vectornorm{ \pi_{s} (x) } \geq 1, \label{eq:fb}\\
      \langle \nabla \pi_{\mu} (x), f(x) \rangle > \const > 0 \ &\forall x \in U.\label{eq:fc}
    \end{align}

  Let $\eta: \rr^{N} \to [0,1]$ be a $C^{\infty}$ function equal to $1$ on $|S|$ and equal to $0$ on $\rr^{N} \backslash U$.
  Put $\hat{f}(x) = \eta(x) f(x) + (1-\eta(x))g(x)$. Denote by $\hat{\varphi}(t,x)$ the local flow generated by
  \begin{equation}
    \dot{x} = \hat{f}(x), \quad x \in \rr^{N}.
  \end{equation}
  Since $\hat{\varphi}(t,x) = \tilde{\varphi}(t, x),\ x \in \rr^{N} \backslash U$ and $U$ is bounded,
  $\hat{\varphi}$ is also a global flow.
  From~\eqref{eq:gc} and~\eqref{eq:fc} we have:
  \begin{equation}
    \begin{aligned}
      \langle \nabla \pi_{\mu} (x), \hat{f}(x) \rangle
      &= \eta(x)\langle \nabla \pi_{\mu} (x), f(x) \rangle
      + (1 - \eta(x)) \langle \nabla \pi_{\mu} (x), g(x) \rangle
      \\ &> \const > 0
    \end{aligned}
  \end{equation}
  for all $x \in \rr^{N}$. Therefore, the Poincar{\'e} map
  \begin{equation}
    P_{\hat{f}}: \rr^{u(S)} \times \rr^{s(S)} \times \{ 0 \} \to \rr^{u(S)} \times \rr^{s(S)} \times \{ 1 \},
  \end{equation}
  is a well-defined diffeomorphism. We set $R := P_{\hat{f}}$.

  First, we will prove that $X_{S, \text{in}}$ $R$-covers $X_{S, \text{out}}$, cf. Figures~\ref{segmentCovFig:A} and~\ref{segmentCovFig:B}.
  In what is below we identify the spaces $\rr^{u(S)} \times \rr^{s(S)} \times \{0\}$ and $\rr^{u(S)} \times \rr^{s(S)} \times \{1\}$ 
  with two copies $\rr^{u(S)} \times \rr^{s(S)}$ wherever necessary, by projecting/embedding the first $u(S)+s(S)$ coordinates.
 
  We need a homotopy of $R$ to a linear map.
  Consider the parameterized family of vector fields
  \begin{equation}
    f_{\xi}(x) = (1 - \xi) \hat{f}(x) + \xi g(x), \quad x \in \rr^{N},
  \end{equation}
  where $\xi \in [0,1]$. By the same reasoning as with $\hat{f}$ each of these vector fields
  generates a global flow and induces an associated Poincar{\'e} map
  \begin{equation}
    P_{f_{\xi}}: \rr^{u(S)} \times \rr^{s(S)} \times \{ 0 \} \to \rr^{u(S)} \times \rr^{s(S)} \times \{ 1 \}.
  \end{equation}
  We define a continuous
  homotopy of maps $h: [0,1] \times \rr^{u(S)} \times \rr^{s(S)} \to \rr^{u(S)} \times \rr^{s(S)}$:
  \begin{equation}
    \begin{aligned}
      h(\xi, \cdot )&:= P_{f_{2\xi}},\quad \xi \in [0, 1/2], \\
      h(\xi, \cdot )&:= \left[ \begin{array}{cc} e\id_{\rr^{u(S)}} & 0 \\ 0 & (2-2\xi)e^{-1}\id_{\rr^{s(S)}} \end{array} \right],\quad \xi \in [1/2, 1].
    \end{aligned}
  \end{equation}
  Indeed, the homotopy agrees at $1/2$. Moreover, $h(0,\cdot) = R$
  and $h(1,\cdot)$ is a linear map satisfying the requirements given by~\eqref{cover:2}. Since 
  it is also clear that~\eqref{cover:1b} and~\eqref{cover:1c}
  hold for $\xi \in [1/2, 1]$, we proceed to check these two conditions on the other half of the interval.

  Denote by $\varphi^{\xi}$ the family of global flows generated by $\dot{x} = f_{\xi}(x)$.
  From~\eqref{eq:fa} and~\eqref{eq:ga}, for $\xi \in [0, 1]$ and $x: \vectornorm{\pi_{u} (x)} \geq 1$ we get

  \begin{equation}\label{hatf:1}
   \begin{aligned}
     &\frac{d}{dt} \vectornorm{ \pi_{u}  \varphi^{\xi}(t, x)  ) }_{|_{t=0}} \\
     & =\langle \nabla \vectornorm{ \pi_{u} (x)}, f_{\xi}(x) \rangle\\
     &= (1-\xi)\langle \nabla \vectornorm{ \pi_{u} (x)} , \hat{f}(x) \rangle
     + \xi \langle \nabla \vectornorm{\pi_{u} (x)} , g(x) \rangle  \\
     &= (1-\xi) \eta(x) \langle \nabla \vectornorm{\pi_{u} (x)} , f(x) \rangle
     + ( 1-\eta(x) + \xi \eta(x) ) \langle \nabla \vectornorm{\pi_{u} (x)} , g(x) \rangle \\
      & > \const > 0.
   \end{aligned}
  \end{equation}

  Therefore, $\vectornorm{\pi_{u} (x)} = 1$ implies $\vectornorm{\pi_{u}(P_{f_{\xi}}(x))} > 1$ for all $\xi \in [0,1]$
  and proves \eqref{cover:1b}.
  By a mirror argument, from~\eqref{eq:fb} and~\eqref{eq:gb} we obtain 
 \begin{equation}\label{hatf:2}
   \begin{aligned}
     \frac{d}{dt} \vectornorm{ \pi_{s}  \varphi^{\xi}(t, x)  ) }_{|_{t=0}} < \const < 0, \quad x: \vectornorm{\pi_{s}(x)} \geq 1,
   \end{aligned}
  \end{equation} 
  hence $\vectornorm{\pi_{s}(P_{f_{\xi}}(x))} = 1$ implies $\vectornorm{\pi_{s} (x)} > 1$ for all $\xi \in [0,1]$.
  This proves \eqref{cover:1c}.

  We are left to prove~\eqref{R=P} and~\eqref{S0=RSet}. 
  Let us start with the latter. 
  
  Observe, that $\hat{f}_{|_{|S|}} = f_{|_{|S|}}$, which proves the ``$\subset$'' inclusion.
  For the other one we proceed as follows.
  Since $\varphi^{0} = \hat{\varphi}$, from~\eqref{hatf:1} and~\eqref{hatf:2}
  we obtain the forward invariance of the sets $\{ x \in \rr^{N}: \  \vectornorm{ \pi_{u}  x } \geq 1 \}$,
  $\{ x \in \rr^{N}: \  \vectornorm{ \pi_{s} ( x ) } \leq 1 \}$ under $\hat{\varphi}$.
  Therefore, for $x \in |X_{S,\text{in}}|$ such that $R(x) \in |X_{S,\text{out}}|$ we have
  \begin{equation}
    \begin{aligned}
      \vectornorm{ \pi_{u}\varphi(t,x) } \leq 1, \ \forall t \geq 0: \pi_{\mu}  \varphi(t,x)  \leq 1, \\
      \vectornorm{ \pi_{s}\varphi(t,x) } \leq 1, \ \forall t \geq 0: \pi_{\mu}  \varphi(t,x)  \geq 0.
    \end{aligned}
  \end{equation}

  As a consequence the part of the trajectory between $x$ and $P_{\hat{f}}(x)$ is wholly contained in $|S|$, where the vector field $\hat{f}$ is equal to $f$,
  hence $x \in S^{0}$. By the same argument~\eqref{S0=RSet} implies~\eqref{R=P}.
\end{proof}

One can also prove an analogous backcovering lemma, which is superfluous in the context of our applications and therefore will be omitted. 

We are now ready to state a prototypical theorem on how a sequence of covering relations and isolating segments
forces the existence of periodic orbits for a given differential equation.
We do not use it though for the FitzHugh-Nagumo equations;
a more refined version, which includes a switch between the slow and the fast dynamics -- Theorem~\ref{thm:2} -- is applied instead. 

\begin{thm}\label{thm:1}
  Let $\dot{x}=f(x), \ x \in \rr^{N}$ be given by a smooth vector field. Assume that there exists a sequence of transversal sections $\{ \Sigma_{i} \}_{i=0}^{k},\ k \in \mathbb{N}$
  and a sequence of h-sets
  \begin{equation}
   \mathcal{X} = \{X_{i}: \ |X_{i}| \subset \Sigma_{i}, \ i = 0,\dots, k \},
  \end{equation}
  such that for each two consecutive h-sets $X_{i-1}$, $X_{i} \in \mathcal{X}$ we have one of the following:
  \begin{itemize}
    \item there exists a Poincar\'e map $P_{i} : \Omega_{i-1} \to \Sigma_{i}$ with $\Omega_{i-1} \subset \Sigma_{i-1}$ and an integer $w_i \in \zz^{*}$ such that
        \begin{equation}\label{eq:pgencover}
         X_{i-1} \longgencover{P_{i},w_i} X_{i},
        \end{equation}
      \item there exists an isolating segment $S_{i}$ between $\Sigma_{i-1}$ and $\Sigma_{i}$ such that $X_{S_{i},\text{in}} = X_{i-1}$ and $X_{S_{i},\text{out}} = X_{i}$.
  \end{itemize}
  Then, there exists a solution $x(t)$ of the differential equation passing consecutively through the interiors of all $X_{i}$'s.
  Moreover: 
  \begin{itemize}
    \item whenever $X_{i-1}$ and $X_{i}$ are connected by an isolating segment, the solution passes through $S^{0}_{i}$;
    \item if $X_{0} = X_{k}$ the solution $x(t)$ can be chosen to be periodic.
  \end{itemize}
\end{thm}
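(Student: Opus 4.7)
The plan is to reduce Theorem~\ref{thm:1} to Theorem~\ref{cov:sequence} by converting every isolating-segment link in the given chain into an honest covering relation; after that, the conclusion should follow almost formally, modulo an induction that identifies the abstract sequence of images with a genuine trajectory of $\dot{x}=f(x)$.

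Concretely, I would process the chain index by index. For each $i$ such that $X_{i-1}$ and $X_i$ are connected by a Poincar\'e map, set $g_i:=P_i$ and keep the generic covering~\eqref{eq:pgencover} as given. For each $i$ such that $X_{i-1}$ and $X_i$ are connected by an isolating segment $S_i$, invoke Theorem~\ref{is:1} to obtain a diffeomorphism $R_i:\Sigma_{i-1}\to\Sigma_i$ together with a covering relation $X_{i-1}\cover{R_i} X_i$ of some degree $w_i$ (inspection of the terminal linear map in the homotopy used in the proof of Theorem~\ref{is:1} yields $w_i=1$), and set $g_i:=R_i$. This produces a single chain of generic coverings
\begin{equation}
 X_0 \longgencover{g_1,w_1} X_1 \longgencover{g_2,w_2} \cdots \longgencover{g_k,w_k} X_k
\end{equation}
to which Theorem~\ref{cov:sequence} applies, yielding $x_0\in\inter X_0$ with $x_i:=(g_i\circ\cdots\circ g_1)(x_0)\in\inter X_i$ for every $i$, and moreover $x_k=x_0$ whenever $X_0=X_k$.

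The step I expect to be the core of the argument is promoting the formal sequence $(x_i)$ to an actual forward trajectory of $f$; I would argue by induction on $i$. When $g_i=P_i$, the assertion is immediate from the definition of the Poincar\'e map. When $g_i=R_i$, the inclusion $x_i=R_i(x_{i-1})\in\inter X_i\subset |X_{S_i,\text{out}}|$ combined with~\eqref{S0=RSet} forces $x_{i-1}\in S_i^0$, and~\eqref{R=P} then yields $P_i(x_{i-1})=R_i(x_{i-1})=x_i$, where $P_i$ now denotes the genuine Poincar\'e map of $f$ from $\Sigma_{i-1}$ to $\Sigma_i$; Theorem~\ref{is:1} further guarantees that the connecting trajectory stays in $|S_i|$, giving the claim about passing through $S_i^0$.

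Under the hypothesis $X_0=X_k$ the inductive identification yields a genuine forward trajectory that closes up at $x_k=x_0$, producing a periodic orbit of the required form. The main obstacle is precisely the glueing step above, in which one must replace the auxiliary modified vector field hidden inside the construction of $R_i$ in Theorem~\ref{is:1} by the true $f$; identities~\eqref{R=P} and~\eqref{S0=RSet} were tailored precisely for this purpose, so no further technical difficulty is anticipated.
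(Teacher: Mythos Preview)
Your proposal is correct and follows essentially the same approach as the paper: replace each isolating-segment link by the covering $X_{i-1}\cover{R_i}X_i$ supplied by Theorem~\ref{is:1}, apply Theorem~\ref{cov:sequence} to the resulting chain, and then use \eqref{R=P} and \eqref{S0=RSet} to identify the abstract orbit with a genuine flow trajectory. The only detail the paper adds that you omit is the remark that the resulting closed trajectory cannot be an equilibrium, since the vector field is nonzero on transversal sections.
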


\begin{proof}
  By applying Theorem~\ref{is:1} we get a chain of covering relations
  \begin{equation}
    X_{0} \longgencover{g_{1},w_1} X_{1} \longgencover{g_{2},w_2} X_{2} \longgencover{g_{3},w_3} \dots \longgencover{g_{k},w_k} X_{k},
  \end{equation}
  where $g_{i} = P_{i}$ or $g_{i} = R_{i}$, $R_{i}$ being the diffeomorphism given by Theorem~\ref{is:1} associated with the segment $S_{i}$
  (then $w_i=\pm 1$).
  From Theorem~\ref{cov:sequence} there exists a sequence
  $\{ x_{i}: x_{i} \in \inter |X_{i}|, i = 1,\dots, k \}$ such that $g_{i}( x_{i-1} ) = x_{i}$ and we can choose $x_{0}=x_{k}$ whenever $X_{0}=X_{k}$.

  Suppose that for certain $i$'s we have $g_{i} = R_{i}$. Since $x_{i-1} \in |X_{i-1}|$ and $R_{i}(x_{i-1}) = x_{i} \in |X_{i}|$,
  Theorem~\ref{is:1} implies that $x_{i-1} \in S_{i-1}^{0}$ and 
  $R_{i}(x_{i-1}) = P_{i}(x_{i-1})$, $P_{i}:V_{i-1} \to \Sigma_{i}$ being a Poincar\'e map defined on a subset of $\Sigma_{i-1}$.
  This proves that this orbit is an orbit of a full sequence of Poincar\'e maps, hence a real trajectory for the flow.
  Furthermore, it is a periodic trajectory if $x_{0} = x_{k}$ (notice that it cannot be an equilibrium as the vector field on transversal sections
  cannot equal $0$).
\end{proof}

\begin{cor}
  For an isolating segment $S$ the set $S^{0}$ is nonempty.
\end{cor}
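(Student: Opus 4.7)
The plan is to apply the two main tools already established in this subsection, namely Theorem~\ref{is:1} and Theorem~\ref{cov:sequence}, in the minimal configuration of a single isolating segment. First, I would invoke Theorem~\ref{is:1} for the segment $S$: this produces a diffeomorphism $R:\Sigma_{\text{in}}\to\Sigma_{\text{out}}$ together with a covering relation
\begin{equation*}
  X_{S,\text{in}} \cover{R} X_{S,\text{out}},
\end{equation*}
and — crucially for our goal — the set-theoretic identity~\eqref{S0=RSet}, which describes $S^{0}$ purely in terms of $R$ and the supports of the front and rear faces.

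Next, I would apply Theorem~\ref{cov:sequence} to the trivial one-step chain $X_{S,\text{in}} \cover{R,w} X_{S,\text{out}}$ (with $w=\pm 1$). Theorem~\ref{cov:sequence} is stated for any $k\in\mathbb{N}$, so this is a legitimate instance and yields a point $x\in\inter |X_{S,\text{in}}|$ with $R(x)\in\inter |X_{S,\text{out}}|$. In particular $R(x)\in |X_{S,\text{out}}|$, and by~\eqref{S0=RSet} this forces $x\in S^{0}$, proving $S^{0}\neq\emptyset$.

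I do not anticipate any genuine obstacle: the work has already been absorbed into the proof of Theorem~\ref{is:1}, where the homotopy argument from $R$ to the hyperbolic linear map $\mathrm{diag}(e\,\id,e^{-1}\,\id)$ verifies the covering relation, and the fact that~\eqref{S0=RSet} is an equality (not just an inclusion) is precisely what lets us transport the topologically-guaranteed preimage back into the real-flow set $S^{0}$. The statement is really a formal corollary, recording the nonemptiness of the set of trajectories that traverse $S$ from front to rear without leaving through the exit faces; any alternative direct dynamical argument using the isolation conditions (S2), (S3) and a Brouwer-degree shooting in the exit direction would essentially reproduce the proof of Theorem~\ref{is:1}, so the route through~\eqref{S0=RSet} is the most economical.
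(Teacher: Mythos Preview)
Your proposal is correct and essentially matches the paper's approach: the corollary is stated immediately after Theorem~\ref{thm:1} without proof, the intended argument being to apply Theorem~\ref{thm:1} to the trivial sequence consisting of the single segment $S$ and read off that the resulting solution passes through $S^{0}$. Your version simply unpacks this one level, invoking Theorem~\ref{is:1} and Theorem~\ref{cov:sequence} directly, which is exactly what the proof of Theorem~\ref{thm:1} does in this special case.
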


\subsubsection{Additional coverings within an isolating segment -- the ``fast-slow switch''.}

Let us first explain the ideas behind this subsection without formality.
Consider a three-dimensional isolating segment $S$ with one exit and one entry direction
let us write $X_{S,\text{lu}}$, $X_{S,\text{ru}}$ for the two connected components of the exit set $S^{-}$
a ``left exit'' and a ``right exit'' one, respectively.
Each of them lies within a level set given by fixing the exit direction level to $\mp 1$.
They can be equipped with an h-set structure with one exit and one entry direction by
setting the entry direction of the segment as the entry one
and the central direction of the segment as the exit one.

If we now consider the function $E_{S}$, which maps each point of the front face $X_{S,\text{in}}$
to the point of $\partial S$ where the trajectory leaves $S$, then 
its image will give a similar alignment as in Lemma~\ref{covlemma}, see Figures~\ref{segmentCovFig:A} and~\ref{segmentCovFig:C}.
The left/right exit edges of $X_{S,\text{in}}$ remain stationary and coincide with the left exit edges
of $X_{S,\text{lu}}$, $X_{S,\text{ru}}$, so to get an actual covering one needs to constrict the h-sets in the image in the exit direction
by a small factor.

For the two connected components of $S^{+}$ -- the ``left/right entrance'' h-sets $X_{S,\text{ls}}$, $X_{S,\text{rs}}$ one needs
to fix the entry direction height in the segment coordinates so the central direction of the segment
takes its role, while the exit direction of the segment induces the exit direction for the h-set.
Then one can prove similar theorems with backcovering relations, by reversing the vector field.

In the context of the FitzHugh-Nagumo model such relations allow us to describe the passage between
slow and fast dynamics where the periodic orbit detaches from the slow manifold and starts following a heteroclinic connection of the fast subsystem.
With an eye on this application we will state the subsequent results for a range of dimension combinations
which allows an easy proof by Lemma~\ref{covlemma}.
We suspect similar theorems hold for all dimension combinations,
and it will be a subject of further studies to formulate adequate proofs.

\begin{defn}
  Let $S$ be a segment with $u(S) = 1$ and $s(S) = s$.
  We define the h-sets:
  \begin{itemize}
    \item $X_{S,\text{lu}} \subset c_{S}^{-1}(\{-1\} \times \rr^{s} \times \rr)$ (\emph{the left exit face}),
    \item $X_{S,\text{ru}} \subset c_{S}^{-1}(\{1\} \times \rr^{s} \times \rr)$ (\emph{the right exit face})
  \end{itemize}
  as follows:
  \begin{itemize}
    \item $u( X_{S,\text{lu}} ) = u( X_{S,\text{ru}} ):= 1$ and $s( X_{S,\text{lu}} ) = s( X_{S,\text{ru}} ):= s$;
    \item we set
      \begin{equation}
        \begin{aligned}
          |X_{S, \text{lu}}| &:= c_{S}^{-1}(\{-1\} \times \rr^{s} \times \rr) \cap |S|, \\
          |X_{S, \text{ru}}| &:= c_{S}^{-1}(\{1\} \times \rr^{s} \times \rr) \cap |S|;
        \end{aligned}
      \end{equation}
    \item we identify $\{ \mp 1\} \times \rr^{s} \times \rr$ with $\rr^{s+1}$
      and then set
       \begin{equation}
        \begin{aligned}
          c_{ X_{S,\text{lu}} } &:= \rho_{u} \circ {c_{S}}_{|_{ c_{S}^{-1}(\{-1\} \times \rr^{s} \times \rr) }}, \\
          c_{ X_{S,\text{ru}} } &:= \rho_{u} \circ {c_{S}}_{|_{ c_{S}^{-1}(\{1\} \times \rr^{s} \times \rr) }},
        \end{aligned}
      \end{equation}
      where $\rho_{u}(p,q,r) = (2r-1, q)$.
  \end{itemize}
\end{defn}

In the above definition the role of $\rho_{u}$ is to change the order of coordinates, as the third center variable in $S$ becomes
an exit variable in $X_{S,\text{lu}}$ and $X_{S,\text{ru}}$.

\begin{defn}
  Let $S$ be a segment with $u(S)=u$ and $s(S)=1$.
  We define the h-sets:
  \begin{itemize}
    \item $X_{S,\text{ls}} \subset c_{S}^{-1}(\rr^{u} \times \{-1\} \times \rr)$ (\emph{the left entrance face}),
    \item $X_{S,\text{rs}} \subset c_{S}^{-1}(\rr^{u} \times \{1\} \times \rr)$ (\emph{the right entrance face})
  \end{itemize}
  as follows:
  \begin{itemize}
    \item $u( X_{S,\text{lu}} ) = u( X_{S,\text{ru}} ):=u$ and $s( X_{S,\text{ls}} ) = u( X_{S,\text{rs}} ) := 1$;
    \item we set
      \begin{equation}
        \begin{aligned}
          |X_{S, \text{ls}}| &:= c_{S}^{-1}(\rr^{u} \times \{-1\} \times \rr) \cap |S|, \\
          |X_{S, \text{rs}}| &:= c_{S}^{-1}(\rr^{u} \times \{1\} \times \rr) \cap |S|;
        \end{aligned}
      \end{equation}
    \item we identify $\rr^{u} \times \{ \mp 1\} \times \rr$ with $\rr^{u+1}$,
      then set
       \begin{equation}
        \begin{aligned}
          c_{ X_{S,\text{ls}} } &:= \rho_{s} \circ c_{S}|_{ c_{S}^{-1}(\rr^{u} \times \{-1\} \times \rr) }, \\
          c_{ X_{S,\text{rs}} } &:= \rho_{s} \circ c_{S}|_{ c_{S}^{-1}(\rr^{u} \times \{1\} \times \rr) };
        \end{aligned}
      \end{equation}
      where $\rho_{s}(p,q,r) = (p, 2r-1)$.
  \end{itemize}
\end{defn}

The role of $\rho_{s}$ is to change the center variable in $S$ to an entry variable in the h-sets $X_{S,\text{ls}}$ and $X_{S,\text{rs}}$.  

\begin{defn}
\label{def:hset-constr}
 Let $X$ be an h-set with $u(X)=u$ and $s(X)=s$ and let $\delta>0$. We define:
 \begin{itemize}
   \item the $\delta$-constricted in the exit direction h-set $X^{\delta,\text{uc}}$,
   \item the $\delta$-constricted in the entry direction h-set $X^{\delta,\text{sc}}$,
 \end{itemize}
 by setting:
 \begin{equation}
   \begin{aligned}
     c_{X^{\delta,\text{uc}}} &=  \upsilon_{\text{uc}} \circ c_{X}, \\
     c_{X^{\delta,\text{sc}}} &= \upsilon_{\text{sc}} \circ c_{X}, \\
     u(X^{\delta,\text{uc}}) = s(X^{\delta,\text{sc}}) &= u, \\
     u(X^{\delta,\text{uc}}) = s(X^{\delta,\text{sc}}) &= s;
   \end{aligned}
 \end{equation}
 where $\upsilon_{\text{uc}}, \upsilon_{\text{sc}}: \rr^{u} \times \rr^{s} \to \rr^{u} \times \rr^{s}$ and:
 \begin{equation}
   \begin{aligned}
     \upsilon_{\text{uc}}(p,q) &= ( (1+\delta)p, q ), \\
     \upsilon_{\text{sc}}(p,q) &= ( p, (1+\delta) q ).
   \end{aligned}
 \end{equation}
\end{defn}

Geometrically, $\delta$-constriction shortens the h-set by a factor $1/(1+\delta)$ in the exit/entry direction.
Our notation $\text{uc}$, $\text{sc}$ stands for constricted in the ``unstable''/``stable'' (i.e. exit/entry) direction.

\begin{thm}\label{is:3}
  Let $S$ be an isolating segment between transversal sections $\Sigma_{\text{in}}$ and $\Sigma_{\text{out}}$ with $u(S)=1$ and $s(S)=s$.
  We have the following covering relations:
  \begin{equation}
    \begin{aligned}
      X_{S,\text{in}}  &\cover{E_{S}} X^{\delta, \text{uc}}_{S, \text{lu}}, \\
      X_{S,\text{in}} &\cover{E_{S}} X^{\delta, \text{uc}}_{S, \text{ru}},
    \end{aligned}
  \end{equation}
  for all $\delta > 0$. 
\end{thm}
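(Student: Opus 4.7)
The plan is to follow the template of the proof of Theorem~\ref{is:1}: construct a modified vector field $\hat{f}$ on $\rr^{N}$ that agrees with $f$ on $|S|$ and with a suitable ``straight'' auxiliary field outside a small neighborhood of $|S|$, chosen so that the Poincar\'e map from $\Sigma_{\text{in}}$ onto the left exit hyperplane $\{\pi_{u} \circ c_{S} = -1\}$ becomes globally defined and a diffeomorphism. The covering relation will then be verified for this globally extended map using Lemma~\ref{covlemma}, and $E_{S}$ is interpreted via this extension (the two maps coincide on the portion of $|X_{S,\text{in}}|$ whose $\varphi$-trajectory actually leaves $|S|$ through the left face, which is the only portion relevant to the covering).

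For the auxiliary field, after normalizing $c_{S} = \id_{\rr^{N}}$, I would take $g_{\text{lu}}(x) = (-1, -\pi_{s}(x), 1)$. Its explicit flow
\begin{equation*}
\tilde{\varphi}_{\text{lu}}(t,x) = \bigl(\pi_{u}(x) - t,\ e^{-t}\pi_{s}(x),\ \pi_{\mu}(x) + t\bigr)
\end{equation*}
sends every point of $\{\pi_{\mu} = 0\}$ to $\{\pi_{u} = -1\}$ at time $\pi_{u} + 1$, with output $\pi_{\mu} = \pi_{u} + 1 \in [0,2]$ and $|\pi_{s}|$ non-increasing. Choose an open neighborhood $U$ of $|S|$ on which (S2b), (S3b) still hold, a smooth cutoff $\eta$ equal to $1$ on $|S|$ and $0$ outside $U$, and set $\hat{f} = \eta f + (1-\eta) g_{\text{lu}}$. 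Inside $|S|$ each trajectory exits in finite time by (S1); outside $U$ the $\pi_{u}$-coordinate decreases at unit rate. Hence the exit map $\hat{E}$ onto $\{\pi_{u} = -1\}$ is a global diffeomorphism.

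To verify $X_{S,\text{in}} \cover{\hat{E}} X^{\delta,\text{uc}}_{S,\text{lu}}$ via Lemma~\ref{covlemma}, observe that the exit coordinate on the target is $(1+\delta)(2\pi_{\mu} - 1)$. Condition (C2): the left exit edge $\{\pi_{u} = -1,\, \pi_{\mu} = 0\}$ is pointwise fixed by $\hat{E}$ (by (S2) it exits $|S|$ instantly), landing at exit coordinate $-(1+\delta) < -1$. The right exit edge $\{\pi_{u} = 1,\, \pi_{\mu} = 0\}$ similarly exits $|S|$ instantly by (S2), then flows under (essentially) $g_{\text{lu}}$ toward $\pi_{u} = -1$, accumulating $\pi_{\mu}$ close to $2$; shrinking $U$ makes the target exit coordinate arbitrarily close to $3(1+\delta)$ and in particular greater than $1$. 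Condition (C1): the bound $|\pi_{s}| \leq 1$ is preserved along every trajectory starting in $|X_{S,\text{in}}|$, by (S3) inside $|S|$ and by the exponential contraction of $\pi_{s}$ under $g_{\text{lu}}$ outside, so the image lies inside $\bigl((-\infty,-1)\times\rr^{s}\bigr) \cup Y_{c} \cup \bigl((1,\infty)\times\rr^{s}\bigr)$.

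The homotopy to a linear map required by Definition~\ref{covering} is built in two stages as in Theorem~\ref{is:1}: on $\xi \in [0,1/2]$ interpolate $f_{\xi} = (1-\xi)\hat{f} + \xi g_{\text{lu}}$, and check that the same two edge-positioning inequalities and the $\pi_{s}$-contraction persist along the family, so the Poincar\'e map $P_{f_{\xi}}$ onto $\{\pi_{u} = -1\}$ is well-defined throughout; on $\xi \in [1/2,1]$ linearly damp the $\pi_{s}$-component of the resulting $g_{\text{lu}}$-Poincar\'e map to zero, arriving at $A(p) = (1+\delta)(2p+1)$ with $A(\pm 1) \notin \overline{B_{1}(0,1)}$ and $\deg(0, A, B_{1}(0,1)) = 1$. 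The covering onto $X^{\delta,\text{uc}}_{S,\text{ru}}$ follows symmetrically with auxiliary field $(+1, -\pi_{s}, 1)$. The only real subtlety is the interpretation issue: $E_{S}$ itself does not map globally into the left exit face, and the whole point of the modification $\hat{f}$ is to exploit the fact that the covering formalism allows the ``off-target'' portion of the image to escape outside $Y_{c}$, provided it does so on the correct side of the exit direction.
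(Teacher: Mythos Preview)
Your approach is genuinely different from the paper's and substantially more complicated. The paper does not modify the vector field at all: it uses $E_{S}$ as is, embedding its codomain $S^{-}\cup |X_{S,\text{out}}|$ in a \emph{folded hyperplane} $\Sigma_{S,u}$ made of the left exit face, the rear face, and the right exit face, with a coordinate system extending $c_{X_{S,\text{lu}}}$. The key observation is then almost trivial: both exit edges of $X_{S,\text{in}}$ already lie on $S^{-}$, so $E_{S}$ fixes them pointwise; the left edge sits at exit coordinate $-1$ (hence $-(1+\delta)<-1$ after constriction) and the right edge, being on the upper part of the fold, has exit coordinate $>1$ by construction of the extended chart. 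Condition (C1) follows from (S3) and strictly positive arrival time in the interior, and Lemma~\ref{covlemma} closes the argument.

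Your construction has a real gap. The auxiliary field $g_{\text{lu}}$ has $\dot\pi_{u}=-1$ everywhere, so at the right exit face $\pi_{u}=+1$ it points \emph{opposite} to $f$ (which has $\dot\pi_{u}>0$ there by (S2)). Two consequences follow. First, your description of the right-edge trajectory under $\hat f$ is incorrect: after exiting $|S|$ it does not ``flow essentially under $g_{\text{lu}}$'' toward $\pi_{u}=-1$; it is trapped near $\pi_{u}=1$ by the competing signs across the transition region and only escapes once $\pi_{\mu}$ exceeds the $\mu$-extent of $U$. The conclusion $\pi_{\mu}>1$ at arrival can still be rescued (to cross $\pi_{u}=1$ from above one must first leave $|S|$, forcing $\pi_{\mu}>1$), but the reasoning needs rewriting. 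Second, and more seriously, the homotopy $f_{\xi}$ does not preserve (S2) at $\pi_{u}=+1$: for intermediate $\xi$ the right edge of $X_{S,\text{in}}$ enters $|S|$ rather than exits, and once inside there is no control on the ratio $\dot\pi_{\mu}/\dot\pi_{u}$, so the arrival value of $\pi_{\mu}$ on $\{\pi_{u}=-1\}$ can be arbitrarily small. This breaks the claimed ``edge-positioning inequality'' for small $\delta$. Since you already invoke Lemma~\ref{covlemma}, the explicit homotopy is redundant and should be dropped --- but then the modified-vector-field machinery buys nothing over the paper's folded-hyperplane argument, and you are still proving a covering for $\hat E$ rather than for $E_{S}$ itself.
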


\begin{proof}
  We will only prove
  $X_{S,\text{in}} \cover{E_{S}}  X^{\delta, \text{uc}}_{S, \text{lu}}$, the other case is analogous.
  The idea of the proof should become immediately clear by looking at Figure~\ref{segmentCovFig:C}.
  We embed the codomain of $E_{S}$ in a folded a folded hyperplane $\Sigma_{S,u}$ 
  consisting of three parts:
  \begin{itemize}
    \item The ``upper part'' $\Sigma_{S,u}^{u} := c_{S}^{-1}\left( \{1\} \times \rr^{s} \times (-\infty, 1] \right)$;
    \item the ``middle part'' $\Sigma_{S,u}^{m} := c_{S}^{-1}\left( [-1,1] \times \rr^{s} \times \{ 1 \} \right)$;
    \item the ``lower part'' $\Sigma_{S,u}^{l} := c_{S}^{-1}\left( \{-1\} \times \rr^{s} \times (-\infty, 1] \right)$.
  \end{itemize}
  It can be regarded as a piecewise smooth section homeomorphic to $\rr^{s+1}$, transversal in the sense that there exist smooth extensions of its smooth pieces
  $\Sigma_{S,u}^{u}$, $\Sigma_{S,u}^{m}$, $\Sigma_{S,u}^{l}$ to manifolds without boundary which are transversal sections for the vector field.
  
  We equip $\Sigma_{S,u}$ with a coordinate system which is given by any homeomorphic extension
  of coordinates given on $\Sigma_{S,u}^{l}$ by $c_{X_{S,\text{lu}}}|_{\Sigma_{S,u}^{l}}$ to all $\Sigma_{S,u}$ -- we denote this extension by $c_{\Sigma_{S,u}}$.
  
  The plan is to use Lemma~\ref{covlemma} and prove conditions that give the same topological alignment 
  as needed for a covering relation.

  Recall, that by $E_{S,c}$ we denote the exit map expressed in local coordinates of the h-set $X_{S,\text{in}}$ and the section $\Sigma_{S,u}$.
  In the $\Sigma_{S,u}$ coordinates the support of $X^{\delta, \text{uc}}_{S, \text{lu}}$ is a product of two balls 
  $[\frac{-1}{1+\delta}, \frac{1}{1+\delta}] \times \overline{B_{s}(0,1)}$. To be in formal
  agreement with the definition of the support we would need to stretch out the first ball to $[-1,1]$ but it is clear that assumptions of Lemma~\ref{covlemma}
  are given by geometrical conditions which persist under such rescaling. Therefore we omit this transformation to keep the notation simple.

  By definition of $\Sigma_{S,u}$ we have
  \begin{equation}\label{eq:intersection}
    |X_{S,\text{in}}| \cap \Sigma_{S,u} = X_{S,\text{in}}^{-},
  \end{equation}
  hence $E_{S}|_{X_{S,\text{in}}^{-}} = \id_{X_{S,\text{in}}^{-}}$. Coupled with the coordinate system
  we have chosen on $\Sigma_{S,u}$ we get
  \begin{equation}\label{eq:image}
    \begin{aligned}
      E_{S,c}\left(\{-1\} \times \overline{B_{s}(0,1)}\right) &= \{-1\} \times \overline{B_{s}(0,1)}, \\
      \pi_{u}E_{S,c}\left(\{1\} \times \overline{B_{s}(0,1)}\right) &> 1.
    \end{aligned}
  \end{equation}
  This, after the aforementioned rescaling of the exit coordinate, implies Condition (C2) in Lemma~\ref{covlemma}.

  Condition (C1) follows easily. From~\eqref{eq:intersection} and (S3a) we have
  \begin{equation}\label{eq:image2}
    \pi_{s}\left(E_{S,c} ( X_{S,\text{in},c} ) \cap [-1/(1+\delta),1/(1+\delta)] \times \rr^{s}\right) \subset B_{s}(0,1),
  \end{equation}
  since we need non-zero positive time to reach $\Sigma_{S,u}$. 
\end{proof}

If we consider the exit map $E_{S^{T}}$ for the reversed flow $\dot{x}=-f(x)$ in a transposed segment $S^{T}$, we obtain the following theorem.

\begin{thm}\label{is:4}
  Let $S$ be an isolating segment between transversal sections $\Sigma_{\text{in}}$ and $\Sigma_{\text{out}}$ with $u(S)=u$ and $s(S)=1$.
  We have the following covering relations
  \begin{equation}
    \begin{aligned}
      X_{S,\text{out}}  &\backcover{E_{S^{T}}^{-1}} X^{\delta, \text{sc}}_{S, \text{ls}}, \\
      X_{S,\text{out}} &\backcover{E_{S^{T}}^{-1}} X^{\delta, \text{sc}}_{S, \text{rs}}
    \end{aligned}
  \end{equation}
  for all $\delta > 0$.
\end{thm}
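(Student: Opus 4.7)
The plan is to derive Theorem~\ref{is:4} from Theorem~\ref{is:3} by passing to the transposed segment $S^{T}$ viewed as an isolating segment for the reversed vector field $\dot x = -f(x)$. By Proposition~\ref{ts:1}, $S^{T}$ is indeed such an isolating segment, and since $u(S^{T}) = s(S) = 1$ and $s(S^{T}) = u$, the hypotheses of Theorem~\ref{is:3} are satisfied for $S^{T}$. Applying that theorem gives, for every $\delta > 0$, the coverings
\[
X_{S^{T}, \text{in}} \cover{E_{S^{T}}} X^{\delta, \text{uc}}_{S^{T}, \text{lu}},
\qquad
X_{S^{T}, \text{in}} \cover{E_{S^{T}}} X^{\delta, \text{uc}}_{S^{T}, \text{ru}},
\]
where $E_{S^{T}}$ denotes the exit map of $S^{T}$ associated with the reversed flow.

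Next I would carry out the coordinate bookkeeping that identifies the h-sets above with the transposes of the h-sets appearing in the statement of Theorem~\ref{is:4}. Starting from $c_{S^{T}} = \mathrm{o} \circ c_{S}$ with $\mathrm{o}(p,q,r) = (q,p,1-r)$ and unwinding the face-coordinate changes $\rho_{u}$, $\rho_{s}$, the constrictions $\upsilon_{\text{uc}}, \upsilon_{\text{sc}}$ and the transposition $j$, one checks that $X_{S^{T}, \text{in}} = (X_{S,\text{out}})^{T}$ as h-sets, and that the pair $X^{\delta,\text{uc}}_{S^{T},\text{lu}}$ and $(X^{\delta,\text{sc}}_{S,\text{ls}})^{T}$ (and analogously on the right) share the same support and coordinate systems up to an orientation reversal in the one-dimensional exit direction; such a reversal at worst flips the sign of the degree of the covering, leaving the covering relation intact.

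Finally, I would invoke the definition of a backcovering, namely that $X \backcover{g} Y$ is equivalent to $g^{-1}$ existing and being continuous on $|Y|$ together with $Y^{T} \cover{g^{-1}} X^{T}$. Substituting the identifications from the previous step into the coverings for $S^{T}$ yields exactly the transposed coverings required by this equivalence for $g = E_{S^{T}}^{-1}$. The invertibility of $E_{S^{T}}$ on the relevant set is delivered by the proof of Theorem~\ref{is:3}, where the exit map is upgraded, via an auxiliary modified vector field $\hat f$, to a globally defined diffeomorphism which is a homeomorphism onto its image. This gives the two claimed backcoverings $X_{S,\text{out}} \backcover{E_{S^{T}}^{-1}} X^{\delta,\text{sc}}_{S,\text{ls}}$ and $X_{S,\text{out}} \backcover{E_{S^{T}}^{-1}} X^{\delta,\text{sc}}_{S,\text{rs}}$.

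The main obstacle is the careful bookkeeping of how the ``central--exit swap'' built into $c_{S^{T}}$ interacts with the constrictions $\upsilon_{\text{uc}}, \upsilon_{\text{sc}}$ and the transposition $j$, and in particular tracking the orientation reversal and confirming that it does not spoil the covering. Beyond this essentially notational work, no new dynamical content is required beyond Theorem~\ref{is:3} and Proposition~\ref{ts:1}.
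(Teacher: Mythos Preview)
Your approach is correct and matches the paper's own (one-sentence) argument, which simply notes that Theorem~\ref{is:4} follows from Theorem~\ref{is:3} applied to the transposed segment $S^{T}$ under the reversed flow. The extra detail you supply---the coordinate bookkeeping via $\mathrm{o}$, $\rho_u$, $\rho_s$, $\upsilon_{\text{uc}}$, $\upsilon_{\text{sc}}$, $j$ and the harmless sign flip in the one-dimensional exit coordinate---is precisely what is implicit in the paper's duality remark.
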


\begin{thm}\label{thm:2}
  Let $\dot{x}=f(x), \ x \in \rr^{3}$ be given by a smooth vector field. Assume that there exists a sequence of transversal sections $\{ \Sigma_{i} \}_{i=0}^{k},\ k \in \mathbb{N}$,
  and sequence of h-sets
  \begin{equation}
   \mathcal{X} = \{X_{i}: \ u(X_{i})=s(X_{i})=1, \ i = 0,\dots, k \}
  \end{equation}
  such that for each two consecutive h-sets $X_{i-1}$, $X_{i} \in \mathcal{X}$ we have one of the following:
  \begin{itemize}
    \item $X_{i-1} \subset \Sigma_{i-1}$, $X_{i} \subset \Sigma_{i}$ and there exists a Poincar\'e map
      $P_{i} : \Omega_{i-1} \to \Sigma_{i}$ with $\Omega_{i-1} \subset \Sigma_{i-1}$ and an integer $w_i \in \zz^{*}$ such that
       \begin{equation}
         X_{i-1} \longgencover{P_{i},w_i} X_{i},
       \end{equation}
      \item there exists an isolating segment $S_{i}$ between $\Sigma_{i-1}$ and $\Sigma_{i}$ such that $X_{S_{i},\text{in}} = X_{i-1}$ and $X_{S_{i},\text{out}} = X_{i}$;
      \item there exists an isolating segment $S_{i}$ between $\Sigma_{i-1}$ and $\Sigma_{i}$ such that $X_{S_{i},\text{in}} = X_{i-1}$ and either
        $X_{S_{i},\text{lu}} = X_{i}$ or $X_{S_{i},\text{ru}} = X_{i}$;
      \item there exists an isolating segment $S_{i}$ between $\Sigma_{i-1}$ and $\Sigma_{i}$ such that $X_{S_{i},\text{out}} = X_{i}$ and either
        $X_{S_{i},\text{ls}} = X_{i-1}$ or $X_{S_{i},\text{rs}} = X_{i-1}$.
    \end{itemize}
  Then there exists a solution $x(t)$ of the differential equation passing consecutively through the interiors of all $X_{i}$'s.
  Moreover: 
  \begin{itemize}
    \item whenever $X_{i-1}$ and $X_{i}$ are connected by an isolating segment as its front and rear faces, respectively, the solution passes through $S^{0}_{i}$;
   \item if $X_{0} = X_{k}$ the solution $x(t)$ can be chosen to be periodic.
  \end{itemize}
\end{thm}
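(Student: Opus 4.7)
The proof extends the argument of Theorem~\ref{thm:1} by accounting for the two additional connection types that invoke Theorems~\ref{is:3} and~\ref{is:4}. The plan is to replace every link in the hypothesis by a generic covering relation between (possibly $\delta$-constricted) h-sets, assemble these into a single chain, apply Theorem~\ref{cov:sequence} to obtain a sequence of points in the interiors, and then identify this sequence as a genuine trajectory of $\dot{x}=f(x)$.

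I would process the links one at a time. Type~1 connections are already generic coverings. For a type~2 link Theorem~\ref{is:1} supplies a diffeomorphism $R_i$ with $X_{i-1} \cover{R_i} X_i$, exactly as in the proof of Theorem~\ref{thm:1}. For a type~3 link Theorem~\ref{is:3} gives $X_{i-1} \cover{E_{S_i}} X_i^{\delta_i,\text{uc}}$, and for a type~4 link Theorem~\ref{is:4} gives the backcovering $X_i \backcover{E_{S_i^T}^{-1}} X_{i-1}^{\delta_i,\text{sc}}$. Each of the latter two statements is valid for arbitrary $\delta_i > 0$.

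The main obstacle is that Theorems~\ref{is:3} and~\ref{is:4} produce $\delta$-constricted auxiliary h-sets at the joints, so two consecutive links need not literally share an h-set. I would resolve this by promoting the constricted h-sets to the actual links of the chain. Two observations make this work. First, the inclusions $|X_i^{\delta,\text{uc}}|,\,|X_i^{\delta,\text{sc}}| \subset |X_i|$ (and analogously for interiors) let every visit to a constricted h-set count as a visit to the original. Second, for $\delta_i$ sufficiently small, any adjacent covering produced via Theorem~\ref{is:1} or a Poincar\'e map remains valid after replacing its target or source by the constricted version, by continuity of the involved maps together with the strict-inequality form of the geometric covering criterion (Lemma~\ref{covlemma} and its backcovering analogue). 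Choosing all $\delta_i$'s simultaneously small enough yields a well-formed chain of generic coverings.

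With the chain in place, Theorem~\ref{cov:sequence} produces a sequence of points $x_i$ lying in the interiors of all h-sets in the chain, and in the closed case $X_0 = X_k$ one may additionally take $x_0 = x_k$. To identify this sequence as an actual solution curve of the ODE I would reuse the argument of Theorem~\ref{thm:1}: $E_{S_i}$ is by construction a piece of the forward flow; $R_i$ agrees with the associated Poincar\'e map on $S_i^0$ by~\eqref{R=P}, and $x_{i-1} \in S_i^0$ follows from $R_i(x_{i-1}) \in \inter X_i$ via~\eqref{S0=RSet}; and the backcovering from Theorem~\ref{is:4} corresponds to a forward flow segment because the transposed segment is built from the reversed vector field. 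The passage through $S_i^0$ whenever $X_{i-1}$ and $X_i$ are the front and rear faces of an isolating segment is then immediate, and transversality of the sections excludes degenerate equilibrium periodic orbits.
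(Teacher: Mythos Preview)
Your proposal is correct and follows essentially the same route as the paper: replace each h-set that appears as an exit/entrance face by a $\delta$-constricted version, argue that all neighbouring links survive for $\delta$ small, assemble a single chain of generic coverings, invoke Theorem~\ref{cov:sequence}, and finish as in Theorem~\ref{thm:1}. The one place where you are slightly terser than the paper is when a constricted $\tilde X_i$ happens to be the front or rear face of an adjacent isolating segment: the paper explicitly reconstructs a nearby segment $\tilde S$ with that constricted face and re-verifies (S1)--(S3), whereas you argue directly that the covering produced by Theorem~\ref{is:1} persists under shrinking its source or target. Your shortcut is legitimate---since $|\tilde X_i|\subset |X_i|$, the original $R$ still covers and the characterisation~\eqref{S0=RSet} still places the resulting point in $S_i^0$---but it is worth stating this inclusion argument explicitly so the reader sees why no segment reconstruction is needed.
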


\begin{proof}
  First, we replace all the h-sets $X_{i}$ of the form $X_{S_{i},\text{lu}}$,\ $X_{S_{i},\text{ru}}$
  by the constricted versions $X^{\delta_{i},\text{uc}}_{S_{i},\text{lu}}$,\ $X^{\delta_{i},\text{uc}}_{S_{i},\text{ru}}$
  and the h-sets of the form $X_{S_{i},\text{ls}},\ X_{S_{i},\text{rs}}$
  by $X^{\delta_{i},\text{sc}}_{S_{i},\text{ls}}\ X^{\delta_{i},\text{sc}}_{S_{i},\text{rs}}$.
  Let us denote the new h-sets by $\tilde{X}_{i}$.
  The replacement procedure is done one by one. Each time an h-set $X_{i}$ needs to be replaced
  we choose $\delta_{i}>0$ small enough, such that
  \begin{itemize}
    \item[(1.)] any covering relation $X_{i}$ was involved in is preserved for $\tilde{X}_{i}$,
    \item[(2.)] any isolating segment that was built including $X_{i}$ as either the front or the rear face 
      can be reconstructed as an isolating segment $\tilde{S}_{i}$/$\tilde{S}_{i+1}$ with the face $\tilde{X}_{i}$.
  \end{itemize}
  It is intuitively clear that both should hold for a sufficiently small perturbation.
  To show (1.) it is enough to observe that a covering relation 
  is a $C^{0}$-open condition with respect to homeomorphisms defining the h-sets
  and persists after constricting one (or both) h-sets with $\delta$ small enough.
  The proof of such proposition would be almost the same as the proof of Theorem 13 in \cite{GideaZgliczynski}
  stating stability of covering relations under $C^{0}$ perturbations, and therefore we omit it.

  For (2.) the segment $\tilde{S}_{i}$ is constructed so that $c_{S_{i}}$ is $O(\delta_{i})$-close in the $C^{1}$ norm to $c_{\tilde{S}_{i}}$.
  We omit the details; describing the construction by precise formulas would introduce a lot of unnecessary notation.
  It is easy to see that for $\delta_{i}$ small enough the conditions (S1)-(S3) (or their counterparts) will still hold.

  We apply Theorems \ref{is:1}, \ref{is:3}, \ref{is:4} to get and a chain of covering relations
  \begin{equation}
   \tilde{X}_{0} \longgencover{g_{1},w_1} \tilde{X}_{1} \longgencover{g_{2},w_2} \tilde{X}_{2} \longgencover{g_{3},w_3} \dots \longgencover{g_{k},w_k} \tilde{X}_{k},
  \end{equation}
  where for each $g_{i}$ we have one of the following:
  \begin{itemize}
    \item $g_{i} = P_{i}$,
    \item $g_{i} = R_{i}$, $R_{i}$ given by Theorem~\ref{is:1},
    \item $g_{i} = E_{S_{i}}$,
    \item $g_{i} = E_{S^{T}_{i}}^{-1}$.
  \end{itemize}
  From here, the proof continues in the same way as the proof of Theorem~\ref{thm:1}. We obtain a sequence of points
  $\{ x_{i}: x_{i} \in \inter X_{i}, i = 1,\dots, k \}$ such that $g_{i}(x_{i-1}) = x_{i}$ and we can choose $x_{0}=x_{k}$ whenever $X_{0}=X_{k}$.
  By the same argument as in Theorem~\ref{thm:1} the sequence lies on a true trajectory of the flow; the trajectory is periodic if $x_{0}=x_{k}$.
\end{proof}

We note that the formulation of Theorem~\ref{thm:2} is not aimed at full generality. By using only Theorem~\ref{is:3} or \ref{is:4} one can produce similar theorems
when one direction is expanding and arbitrary number of directions are contracting or vice versa.

\section{A model example}\label{sec:covslowfast}

The purpose of this section is to discuss a model example for the construction of a closed chain of covering relations
and isolating segments in the FitzHugh-Nagumo equations.
Contents of this section are by no means necessary to prove the main Theorems~\ref{thm:main1}, \ref{thm:main2}, \ref{thm:main3}.
Instead, their purpose is to predict that the computer assisted proof based on Theorem~\ref{thm:2} will succeed,
so the sequence of coverings is found not just by pure luck, but is backed up by the structure of the singularly perturbed system.

Our example will be a fast-slow system of the form:
\begin{equation}\label{fastslow}
  \begin{aligned}
    \dot{x} &= f(x,y,\epsilon), \\
    \dot{y} &= \epsilon g(x,y,\epsilon),
  \end{aligned}
\end{equation}
where $x \in \rr^{2}$, $y \in \rr$, $f,g$ are smooth functions of $(x,y,\epsilon)$ and $0 < \epsilon \ll 1$ is the small parameter.
We will also write $x=(x_{1},x_{2})$ to denote the respective fast coordinates. 
By \emph{the slow manifold} of~\eqref{fastslow} we will mean the set $\{ (x,y): f(x,y,0)=0 \}$,
and by the \emph{fast subsystem} a two-dimensional system
\begin{equation}
  \dot{x} = f(x,y,0),
\end{equation}
given by fixing $y$ as a parameter.

We make the following assumptions:
\begin{itemize}
  \item[(P1)] we have two branches of the slow manifold $\Lambda_{\pm 1}$,
    that coincide with $\{ 0 \} \times \{ 1 \} \times \rr$ and $\{ 0 \} \times \{ -1 \} \times \rr$,
    respectively\footnote{One expects that the branches would actually connect with each other, but we bear in mind that this is a model example and 
    the fold points are of no interest to us. }.
    Both are hyperbolic with one expanding and one contracting direction, and the vector field in their neighborhoods $U_{\pm 1}$ is of the following form:
    \begin{align}
      f(x,y,0) &= A_{\pm 1}(y)(x \mp [0,1]^{T} ) + h_{\pm 1}(x,y), \label{P1:1} \\
      \exists \tilde{\epsilon}_{0} >0: \ 0 &< \delta_{\pm 1}  \leq \pm g(x,y,\epsilon) \leq \delta^{-1}_{\pm 1}, \ (x,y) \in U_{\pm 1}, \ \epsilon \in (0, \tilde{\epsilon}_{0}]. \label{P1:2}
    \end{align}
    The functions $A_{\pm 1},h_{\pm 1}$ are assumed to be smooth and to have the following properties
    \begin{align}
      A_{\pm 1}(y) &= \left[ \begin{array}{cc} \lambda_{u,\pm1}(y) & 0 \\ 0 & \lambda_{s,\pm 1}(y) \end{array} \right], \label{P1:3} \\
        -\delta_{\pm 1}^{-1} \leq \lambda_{s,\pm 1}(y) \leq - \delta_{\pm 1} &< 0 < \delta_{\pm 1} \leq \lambda_{u, \pm 1}(y) \leq \delta_{\pm 1}^{-1}, \label{P1:4} \\
        \frac{h_{\pm 1}(x,y)}{ \vectornorm{ x - (0,\pm 1) } } &\xrightarrow{x \to (0,\pm 1)} 0\quad \forall y. \label{P1:5}
    \end{align}
    The values $\delta_{\pm 1} > 0$ are some constant bounds, which in particular do not depend on neither $\epsilon$ nor $y$.

  \item[(P2)] For the parameterized family of the fast subsystems
    we have two parameters $y_{*},y^{*}$, without loss of generality assumed to be equal to $\mp 1$, for which there exists a transversal heteroclinic connection between
    the equilibria $(0,-1)$ and $(0, 1)$ in the first case, and $(0,1)$ and $(0,-1)$ in the second.
    That means: given any two one-dimensional transversal sections $\Sigma_{f, \pm 1}$ for the fast subsystems for $y=\pm 1$
    which have a nonempty, transversal intersection with the heteroclinic orbits, the maps $\Psi_{\pm 1}$ given by
    \begin{equation}
      \Psi_{\pm 1}: y \to W^u_{\pm 1 ,\Sigma_{f, \pm 1} }(y)- W^s_{\mp 1 ,\Sigma_{f, \pm 1}}(y) \in \rr
    \end{equation}
    have zeroes and a non-zero derivative at $y = \pm 1$.

    Here $W^u_{\pm 1, \Sigma}(y)$ and $W^s_{\pm 1, \Sigma}(y)$ denote the first intersections between the appropriate branches\footnote{To not complicate further the notation,
    we make an implicit assumption that only one pair of branches cross in each of the two subsystems and only refer to them.}
    of the unstable/stable manifolds of the equilibria $(0,\pm 1)$ with a given section $\Sigma$ in the section coordinates.

  \item[(P3)] Denote the points $(0,-1,-1)$, $(0,1,-1)$, $(0,1,1)$, $(0,-1,1)$ by $\Gamma_{\alpha}$, $\alpha \in \mathcal{I} = \{ dl,$  $ul, ur, dr \}$\footnote{
      Index letters in $\mathcal{I}$ stand for up/down and left/right and refer to positions of the points in the $(y,x_2)$ plane, see Figure~\ref{schemeFig}.}, 
      respectively and set $\epsilon=0$.
    For each $\alpha \in \mathcal{I}$ there exists a neighborhood $V_{\alpha}$ of $\Gamma_{\alpha}$,
    such that if $\Lambda_{\pm 1} \cap V_{\alpha}$ is the part of the slow manifold contained
    in $V_{\alpha}$, then the part of its unstable manifold contained in $V_{\alpha}$ coincides with the plane $\rr \times \{\pm 1\} \times \rr$,
    and the part of the stable manifold contained in $V_\alpha$ - with the plane $ \{ 0 \} \times \rr \times \rr$. Without loss of generality we can have
    $\bigcup_{\alpha \in \mathcal{I}} V_{\alpha} \subset ( U_{-1} \cup U_{1} )$.
\end{itemize}

Assumptions that provide us with straightened coordinates are used mostly to simplify the exposition.
It is our impression that the Fenichel theory, and in particular the Fenichel normal form around the slow manifold
are well-suited for verifying such conditions, see~\cite{Fenichel}.

\begin{thm}\label{thm:heuristic}
  Under assumptions (P1)-(P3), there exists an $\epsilon_{0}>0$ and
  six sets forming isolating segments for~\eqref{fastslow} for $\epsilon \in (0,\epsilon_0]$:
  \begin{itemize}
    \item $S_{u}$, $S_{d}$ - two ``long'' isolating segments positioned around the branches $\Lambda_{\pm 1}$ of the slow manifold;
    \item $S_{\alpha},\ \alpha \in \mathcal{I}$ - four short ``corner'' isolating segments, each containing the respective point $\Gamma_{\alpha}$;
  \end{itemize}
  along with the associated transversal sections of the form $\Sigma_{S_{*},\text{in}},\Sigma_{S_{*},\text{out}}$, with
  \begin{equation}
    \begin{aligned}
      u(S_{dl}) &= s(S_{dl}) = u(S_{dr}) = s(S_{dr})
      \\ &= u(S_{ul})=s(S_{ul})= u(S_{ur})=s(S_{ur})
      \\ &=u(S_{u}) = s(S_{u})=u(S_{d})=s(S_{d})
      \\ &= 1.
    \end{aligned}
  \end{equation}
  Moreover, for the h-sets defined by isolating segments we have
  \begin{align}
   X_{S_{u},\text{out}} &= X_{S_{ur},\text{in}}, \label{eq:UR} \\
   X_{S_{dr},\text{out}} &= X_{S_{d},\text{in}}, \\
   X_{S_{d},\text{out}} &= X_{S_{dl},\text{in}}, \\
   X_{S_{ul},\text{out}} &= X_{S_{u},\text{in}}, \label{eq:UL}
  \end{align}
  and the collection
  \begin{equation}
    \begin{aligned}
      \mathcal{X}_{\text{FHN},\text{P}} &= \\
      \{ &X_{S_{u},\text{in}},\ X_{S_{u},\text{out}},\ X_{S_{ur},\text{lu}},
      \ X_{S_{dr},\text{rs}}, \\
      &X_{S_{d}, \text{in}},\ X_{S_{d},\text{out}}, \ X_{S_{dl},\text{ru}},
      \ X_{S_{ul},\text{ls}},\ X_{S_{u}, \text{in}}
      \}
    \end{aligned}
  \end{equation}
  satisfies assumptions of Theorem~\ref{thm:2} for $\epsilon \in (0,\epsilon_{0}]$.
  In particular we have the following covering relations among the h-sets
  not connected by an isolating segment:
  \begin{align}
     X_{S_{dl},\text{ru}} &\cover{P_{L}}  X_{S_{ul},\text{ls}}, \\
     X_{S_{ur},\text{lu}} &\cover{P_{R}}  X_{S_{dr},\text{rs}},
  \end{align}
  where $P_{*}$ are Poincar\'e maps between the respective h-sets and transversal sections containing the next h-set.
  
  As a consequence there exists a periodic solution of the system for these parameter values.
\end{thm}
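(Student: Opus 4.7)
The plan is to build all six isolating segments by hand in the straightened coordinates supplied by (P1) and (P3), check (S1)--(S3) by factoring $\epsilon$ out of the slow component of the vector field, verify the two missing fast Poincar\'e coverings from the transversal heteroclinic condition (P2), and then read off the conclusion from Theorem~\ref{thm:2}.

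For the long slow segments $S_{u}, S_{d}$ I would take thin tubes around $\Lambda_{\pm 1}$ of the product form $\{|x_{1}|\leq r_{u}(y),\ |x_{2}\mp 1|\leq r_{s}(y),\ y\in[y_{-},y_{+}]\}$, with the coordinate change $c_{S}$ aligning $\pi_{u}$ with $x_{1}$, $\pi_{s}$ with $x_{2}\mp 1$, and $\pi_{\mu}$ with a rescaled $\pm(y-y_{-})$. Condition (S1a) then reduces via~\eqref{eq:sgna} to $\pm g(x,y,\epsilon)>0$, which holds uniformly for $\epsilon\in(0,\tilde{\epsilon}_{0}]$ by (P1.2) after cancelling the factor $\epsilon$. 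Conditions (S2b), (S3b) follow from (P1.3)--(P1.5): on the exit face the outer normal derivative is $\lambda_{u,\pm 1}(y)\,r_{u}(y)+O(r_{u}^{2})$, bounded below by a positive constant for $r_{u}$ small; the entry face is symmetric. To allow the size matching needed to close the loop, I let $r_{u}$ grow mildly and $r_{s}$ shrink mildly with $y$; isolation survives because the resulting tilt contributes only an $O(\epsilon)$ term through the central component, whereas the hyperbolicity bounds are $O(1)$.

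For the corner segments $S_{\alpha}$ I use (P3), which straightens the local stable and unstable planes of $\Lambda_{\pm 1}$ near each $\Gamma_{\alpha}$. In $S_{ur}$, for example, I swap roles with respect to $S_{u}$: the central direction becomes $x_{1}$ (the unstable fast direction along which the orbit leaves the slow manifold), the entry direction remains $x_{2}-1$, and the exit direction becomes $y$. This is precisely the setup encoded by the auxiliary h-sets $X_{S,\text{lu}},\ X_{S,\text{rs}}$ from Theorems~\ref{is:3}--\ref{is:4}. Choosing the dimensions of $S_{ur}$ so that its front face coincides with the rear face of $S_{u}$ on the common section gives~\eqref{eq:UR}; the other three identifications~\eqref{eq:UL} are built the same way. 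Inequalities (S1)--(S3) for the corner segments then reduce to simple sign checks in the coordinates of (P3), supplemented by the normal hyperbolicity estimates of (P1).

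The main obstacle is the two fast Poincar\'e coverings $X_{S_{ur},\text{lu}}\cover{P_{R}}X_{S_{dr},\text{rs}}$ and $X_{S_{dl},\text{ru}}\cover{P_{L}}X_{S_{ul},\text{ls}}$ for a uniform $\epsilon_{0}>0$. At $\epsilon=0$ the fast subsystem at $y=1$ (respectively $y=-1$) admits a transversal heteroclinic by (P2), so the function $\Psi_{1}$ (respectively $\Psi_{-1}$) has a simple zero on a one-dimensional transversal section placed between the two corner boxes. This transversality is exactly what Conditions (C1)--(C2) of Lemma~\ref{covlemma} demand of the planar fast Poincar\'e map between the relevant side faces; the strong contraction in the entry direction controls (C1), and the simple zero of $\Psi_{\pm 1}$ forces the two exit edges to land on opposite sides of the target. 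The full three-dimensional map for $\epsilon>0$ differs from the planar $\epsilon=0$ map by an $O(\epsilon)$ perturbation uniformly on the compact h-sets involved, and since the covering relation is $C^{0}$-open (as invoked in the proof of Theorem~\ref{thm:2}), both coverings persist for all $\epsilon\in(0,\epsilon_{0}]$ with $\epsilon_{0}$ sufficiently small. Assembling the six segments and two coverings into the closed chain $\mathcal{X}_{\text{FHN},\text{P}}$ and applying Theorem~\ref{thm:2} then produces the claimed periodic orbit.
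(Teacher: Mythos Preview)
There is a genuine gap in your construction of the corner segments. You propose to make $x_{1}$ the central direction of $S_{ur}$, but the theorem requires $\Gamma_{ur}=(0,1,1)\in |S_{ur}|$, and at this point $\dot{x}_{1}=0$ for $\epsilon=0$ since $(0,1)$ is a fixed point of the fast subsystem; in fact $\dot{x}_{1}=\lambda_{u,1}(y)\,x_{1}+o(|x-(0,1)|)$ changes sign across $x_{1}=0$, so the monotonicity condition (S1) cannot hold on any segment containing $\Gamma_{ur}$ with $x_{1}$ as central coordinate. The same obstruction applies at the other three corners. Your appeal to Theorems~\ref{is:3}--\ref{is:4} is a misreading: there the h-sets $X_{S,\text{lu}}$, $X_{S,\text{rs}}$ are \emph{faces} of a segment whose central direction is still the slow variable $y$; the ``switch'' is that the orbit leaves through an exit face (at fixed $x_{1}$) rather than the rear face (at fixed $y$), and the central direction $y$ is promoted to the exit direction only in the induced h-set structure on that face, not as the central direction of a new segment.

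In the paper all six segments, including the corners, keep $y$ as central, $x_{1}$ as exit, $x_{2}\mp 1$ as entry. Then $X_{S_{ur},\text{in}}$ sits in a section $\{y=\text{const}\}$ and can be made to coincide with $X_{S_{u},\text{out}}$, giving~\eqref{eq:UR}, while $X_{S_{ur},\text{lu}}$ is the face $\{x_{1}=-\varepsilon_{R}\}$ from which the fast Poincar\'e map $P_{R}$ departs. With your assignment the front face of $S_{ur}$ would lie in a section $\{x_{1}=\text{const}\}$, which cannot equal the rear face of $S_{u}$ (a section $\{y=\text{const}\}$), so the identification~\eqref{eq:UR} also breaks. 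Once the corner segments are set up with $y$ central, your treatment of the long segments $S_{u},S_{d}$ and your use of (P2) together with $C^{0}$-openness of covering relations to obtain the fast coverings for all small $\epsilon$ is essentially the paper's argument.
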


\begin{figure}
  \centering
  \begin{tikzpicture}[line cap=round,line join=round,>=latex,x=10mm,y=5mm]
 
  \draw[color=black] (-0.,0.) -- (10,0.);
  \foreach \x in {0,...,10}
  \draw[shift={(\x,0)},color=black] (0pt,1pt) -- (0pt,-1pt) node[below] {};

  \draw[color=black] (0.,0.) -- (0.,10);
  \foreach \y in {0,...,10}
  \draw[shift={(0,\y)},color=black] (2pt,0pt) -- (-1pt,0pt) node[left] {};
 

  \clip(0,-1) rectangle (10,10);

  \filldraw[draw=orange, fill=orange!25] (1.7,7.4) -- (1.7,8.6) -- (2.3,8.6) -- (2.3,7.4) -- (1.7,7.4);
  \filldraw[draw=orange, fill=orange!25] (1.6,1.2) -- (1.6,2.8) -- (2.4,2.8) -- (2.4,1.2) -- (1.6,1.2);
  \filldraw[draw=orange, fill=orange!25] (7.7,1.4) -- (7.7,2.6) -- (8.3,2.6) -- (8.3,1.4) -- (7.7,1.4);
  \filldraw[draw=orange, fill=orange!25] (7.6,7.2) -- (7.6,8.8) -- (8.4,8.8) -- (8.4,7.2) -- (7.6,7.2);

  \filldraw[draw=brown, fill=brown!25] (7.7,1.4) -- (7.7,2.6) -- (2.4,2.8) -- (2.4,1.2) -- (7.7,1.4);
  \filldraw[draw=brown, fill=brown!25] (7.6,7.2) -- (7.6,8.8) -- (2.3,8.6) -- (2.3,7.4) -- (7.6,7.2);

  \draw [color=darkgreen,semithick,domain=0.:10.] plot(\x,{8});
  \draw [color=darkgreen,semithick,domain=0.:10.] plot(\x,{2});

  \draw [->,semithick,dash pattern=on 5pt off 5pt,color=blue] (2.,2.8) -- (2.,7.4);
  \draw [->,semithick,dash pattern=on 5pt off 5pt,color=blue] (8.,7.2) -- (8.,2.6);

  \draw [->,color=blue] (4.99,8.) -- (5.01,8.);
  \draw [->,color=blue] (5.01,2.) -- (4.99,2.);
 
  \draw [color=red,thick] (1.6,2.8) -- (2.4,2.8);
  \draw [color=red,thick] (7.7,2.6) -- (8.3,2.6);
  \draw [color=red,thick] (2.4,2.8) -- (2.4,1.2);
  \draw [color=red,thick] (7.7,2.6) -- (7.7,1.4);
 
  \draw [color=red,thick] (7.6,8.8) -- (7.6,7.2);
  \draw [color=red,thick] (7.6,7.2) -- (8.4,7.2);
  \draw [color=red,thick] (2.3,7.4) -- (2.3,8.6);
  \draw [color=red,thick] (2.3,7.4) -- (1.7,7.4);
 
  \draw [color=blue,<-] (2.3-0.1,8.+0.2) ..controls(2.-0.1,8.+0.2).. (2.-0.1,7.4+0.2);
  \draw [color=blue,<-] (8.4-0.3,8.-0.6) ..controls(8.4-0.3,8.8-0.6).. (8.-0.3,8.8-0.6);
 
  \draw [color=blue,->] (8.+0.1,2.6-0.2) ..controls(8.+0.1,2.-0.2).. (7.7+0.1,2.-0.2); 
  \draw [color=blue,<-] (1.6+0.3,2.+0.6) ..controls(1.6+0.3,1.2+0.6).. (2.+0.3,1.2+0.6);

  \begin{scriptsize}
    \draw (9.2,8.) node[anchor=north] {$\Lambda_{1}$};
    \draw (9.2,2.) node[anchor=north] {$\Lambda_{-1}$};
 
    \draw (2.,5.) node[anchor=west] {$P_{L}$};
    \draw (8.,5.) node[anchor=west] {$P_{R}$};
  
    \draw (5.,8.) node[anchor=north west] {$S_{u}$};
    \draw (5.,2.) node[anchor=north west] {$S_{d}$};   

    \draw (8.18,7.88) node[anchor=north east] {$S_{ur}$};
    \draw (1.3,7.5) node[anchor=north west] {$S_{ul}$};     
    \draw (1.56,1.09) node[anchor=south west] {$S_{dl}$};
    \draw (8.7,0.7) node[anchor=south east] {$S_{dr}$};     
  
    \draw (2.,0.) node[anchor=north] {$y_{*}$};
    \draw (8.,0.) node[anchor=north] {$y^{*}$};

    \draw (5.,0.) node[anchor=north] {$y$};
    \draw (0.,5.) node[anchor=west] {$x_{1}, x_{2}$};
  \end{scriptsize}

\end{tikzpicture}
  \caption{Isolating segments and Poincar\'e maps in the model example for the periodic orbit, the sequence of h-sets plotted in red.}\label{schemeFig}
\end{figure}
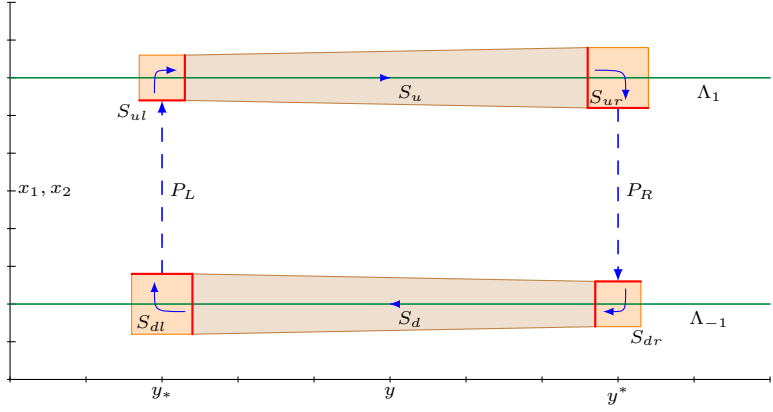

The conclusion of the theorem is portrayed in Figure~\ref{schemeFig}.
We break the proof into two parts, first we prove the existence of the corner isolating segments and coverings as a separate lemma.

\begin{lem}\label{lem:heuristic1}
  Consider the system~\eqref{fastslow}.
  For $\epsilon \in (0,\bar{\epsilon}_{0}]$, $\bar{\epsilon}_{0}>0$ small
  there exist two transversal sections of the form
  \begin{equation}
    \begin{aligned}
      \Sigma_{L} &:= \{ (x_{1},x_{2},y): \ x_{2} = 1-\varepsilon_{L} \} \cap \tilde{V}_{ul} \subset V_{ul}, \\
      \Sigma_{R} &:= \{ (x_{1},x_{2},y): \ x_{2} = -1+\varepsilon_{R} \} \cap \tilde{V}_{dr} \subset V_{dr},
    \end{aligned}
  \end{equation}
  $\tilde{V}_{ul}, \tilde{V}_{dr}$ being neighborhoods of $\Gamma_{ul}$ and $\Gamma_{dr}$
  and four isolating segments $S_{dl}$, $S_{ul}$, $S_{ur}$, $S_{dr}$ as specified in Theorem~\ref{thm:heuristic}
  such that
  \begin{equation}
    \begin{aligned}
      |X_{S_{ul},\text{ls}}| &\subset \Sigma_{L}, \\
      |X_{S_{dr},\text{rs}}| &\subset \Sigma_{R},
    \end{aligned}
  \end{equation}
  and there are coverings
  \begin{align}
    X_{S_{dl},\text{ru}} &\cover{P_{L}}  X_{S_{ul},\text{ls}}, \label{eq:PLcover} \\
    X_{S_{ur},\text{lu}} &\cover{P_{R}}  X_{S_{dr},\text{rs}}.
  \end{align}
  Moreover, the sections $\Sigma_{*}$ and the segments $S_{*}$ are $\epsilon$-independent, and 
  given a maximal diameter $\diam_{\text{max}}>0$ they can be chosen
  so that
  \begin{equation}\label{diam}
   \diam(S_{*}) < \diam_{\text{max}}.
  \end{equation}
\end{lem}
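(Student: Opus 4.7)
The plan is to build each corner segment as a small coordinate box in the straightened chart supplied by (P3), verify the isolation conditions directly from the diagonal normal form of (P1), and then derive the coverings by a continuity argument from the singular limit $\epsilon=0$, where the transversal heteroclinic intersections of (P2) supply the topological stretching.

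Concretely, for $S_{dl}$ centred at $\Gamma_{dl}=(0,-1,-1)$ I would take the box $|S_{dl}|=[-a,a]\times[-1-b,-1+b]\times[-1-c,-1+c]$ in $(x_1,x_2,y)$-coordinates, with exit direction $x_1$, entry direction $x_2$, and central direction $y$, the last oriented so that $\pi_\mu$ increases as $y$ decreases (forced by $\dot y<0$ on $\Lambda_{-1}$). By Remark~\ref{alternativeS}, the isolation conditions reduce to sign inequalities on the vector field: $\dot x_1$ points outward on $\{|x_1|=a\}$ with sign $\lambda_{u,-1}(y)x_1$ from \eqref{P1:3}--\eqref{P1:4}, $\dot x_2$ points inward on $\{|x_2+1|=b\}$ with sign $\lambda_{s,-1}(y)(x_2+1)$, and the central monotonicity $-\dot y=-\epsilon g>0$ on $|S_{dl}|$ holds by \eqref{P1:2}. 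All three are assured for $a,b,c$ small enough to dominate the remainder $h_{-1}$ and to keep the box inside $V_{dl}$. The other three corner segments are built analogously, with sign conventions dictated by whether we sit on $\Lambda_{-1}$ or $\Lambda_{+1}$. Setting $\Sigma_L:=\{x_2=1-\varepsilon_L\}\cap\tilde{V}_{ul}$ with $\varepsilon_L$ equal to the $x_2$-half-width of $S_{ul}$ (and symmetrically for $\Sigma_R$) forces $|X_{S_{ul},\text{ls}}|\subset\Sigma_L$ and $|X_{S_{dr},\text{rs}}|\subset\Sigma_R$ by the very definition of the entry faces.

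The heart of the lemma is $X_{S_{dl},\text{ru}}\cover{P_L}X_{S_{ul},\text{ls}}$. Under the $\rho_u,\rho_s$ re-labellings preceding Theorems~\ref{is:3}--\ref{is:4}, the exit coordinate of $X_{S_{dl},\text{ru}}$ is the central direction $y$ of $S_{dl}$, while the exit coordinate of $X_{S_{ul},\text{ls}}$ is the fast-unstable direction $x_1$ of $S_{ul}$ and its entry coordinate is again $y$. I would first verify the covering at $\epsilon=0$, where $y$ is conserved and $P_L^{(0)}$ decomposes into a one-parameter family of fast-subsystem Poincar\'e maps. For each fixed $y_0\in[-1-c,-1+c]$ the $y_0$-slice of $X_{S_{dl},\text{ru}}$ is a short transversal to $W^u_{-1}$ near the saddle $(0,-1)$; by the $\lambda$-lemma its image on $\Sigma_L$ under the fast flow is $C^0$-close to the single point $W^u_{-1,\Sigma_L}(y_0)$. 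By (P3) $W^s_{+1,\Sigma_L}(y_0)=\{x_1=0\}$, so $W^u_{-1,\Sigma_L}(y_0)=\Psi_{-1}(y_0)$ from the definition of $\Psi_{-1}$. Since $\Psi_{-1}(-1)=0$ and $\Psi'_{-1}(-1)\neq 0$, choosing $c$ so that $|\Psi_{-1}(-1\pm c)|$ exceeds the $x_1$-half-width of $S_{ul}$ places the images of the two exit edges of $X_{S_{dl},\text{ru}}$ (at $y_0=-1\pm c$) on opposite sides of and strictly outside the $x_1$-window of $X_{S_{ul},\text{ls}}$: this is condition (C2) of Lemma~\ref{covlemma}. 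Condition (C1) holds because $y_0$ is conserved and sits in the $y$-window (entry direction) of $X_{S_{ul},\text{ls}}$ as soon as this window exceeds $2c$, while the contraction along $W^s_{+1}$ compresses the image into an arbitrarily thin strip around $\{x_1=0\}$.

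To extend to $\epsilon>0$, hyperbolicity of $\Lambda_{\mp 1}$ forces the fast trajectory to leave $V_{dl}$ and enter $V_{ul}$ in time $O(1)$ uniformly in $\epsilon$, so $P_L^{(\epsilon)}\to P_L^{(0)}$ in $C^0$ on the compact face while the $y$-drift during the jump is $O(\epsilon)$. Conditions (C1) and (C2) being strict open inequalities, they persist for $\epsilon\in(0,\bar\epsilon_0]$ once $\bar\epsilon_0$ is small. The covering $X_{S_{ur},\text{lu}}\cover{P_R}X_{S_{dr},\text{rs}}$ is obtained symmetrically at $y=+1$ via $\Psi_{+1}$. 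The diameter bound~\eqref{diam} is free: the inequalities above are homogeneous in the box parameters $a,b,c,\varepsilon_L,\varepsilon_R$ against the fixed positive constants $\delta_{\pm 1}$ and $|\Psi'_{\pm 1}(\pm 1)|$, so a common rescaling by a sufficiently small factor preserves them while shrinking all diameters below $\diam_{\max}$. The step I expect to be most delicate is making $P_L^{(\epsilon)}\to P_L^{(0)}$ truly uniform across the compact face and along a tubular neighborhood of the singular heteroclinic --- that is, showing that the full flow remains $\epsilon$-close to the frozen fast flow throughout the entire $O(1)$-time jump; everything else is bookkeeping on (P1)--(P3).
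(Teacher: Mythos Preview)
Your proposal is correct and follows essentially the same route as the paper's proof: build the corner segments as small coordinate boxes in the straightened charts from (P1)/(P3), read off (S1a)--(S3b) from the diagonal linear part plus the sign condition~\eqref{P1:2} on $g$, reduce the covering~\eqref{eq:PLcover} to $\epsilon=0$ by $C^0$-robustness, and at $\epsilon=0$ use conservation of $y$ for (C1) and the transversality $\Psi'_{-1}(-1)\neq 0$ from (P2) for (C2). The paper differs only in presentation: instead of invoking the $\lambda$-lemma it works directly with $P_L(\varepsilon_L,-1,-1)=(0,1-\varepsilon_L,-1)$ and $\partial_y\pi_{x_1}P_L\neq 0$, and it fixes the parameters in a different order (first $\delta_{s,ul}$, then $\delta_{s,dl},\delta_{u,dl}$ small for (C1), then $\delta_{u,ul}$ small for (C2)); both orderings work. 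Your ``delicate step'' --- uniform $C^0$-convergence $P_L^{(\epsilon)}\to P_L^{(0)}$ --- is genuinely the point the paper glosses over by citing robustness of covering relations, and your justification (the source face sits at $x_1=\varepsilon_L$ bounded away from $\Lambda_{-1}$ and the target section at $x_2=1-\varepsilon_L$ bounded away from $\Lambda_{+1}$, so the transit time is $O(1)$ uniformly) is the right one.
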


\begin{proof}
  We focus first on the ``left'' part of the picture, since all arguments for the ``right'' part are symmetric and independent.
  Without loss of generality we can assume the crossing of the unstable and stable manifolds near the point
  $\Gamma_{ul}$ occurs for $x_{2}-1$ negative and take $\varepsilon_{L}> 0$. For $\varepsilon_{L}$
  and $\epsilon$ small enough, condition (P1) implies that the linear part of the vector field dominates the higher order terms $h_{\pm 1}$, so after having
  set a sufficiently small neighborhood $\tilde{V}_{ul}$ the section $\Sigma_{L}$ is transversal.

  The construction of the isolating segments $S_{ul}, S_{dl}$ is also enabled by (P1).
  Because we already work in straightened coordinates, their supports can be chosen to be of the form:
  \begin{equation}
    \begin{aligned}
      |S_{dl}| &= [-\varepsilon_{L}, \varepsilon_{L}] \times [-1-\delta_{s, dl}, -1+\delta_{s,dl} ] \times [ -1-\delta_{u,dl}, -1+\delta_{u,dl}], \\
      |S_{ul}| &= [-\delta_{u, ul}, \delta_{u,ul} ] \times [1-\varepsilon_{L}, 1+\varepsilon_{L}] \times [ -1-\delta_{s,ul}, -1+\delta_{s,ul}], \\
      u(S_{dl}) &= s(S_{dl}) = u(S_{ul}) = s(S_{dl}) = 1,
    \end{aligned}
  \end{equation}
  where the constants $\delta_{s,dl}, \delta_{u,dl}, \delta_{s,ul}, \delta_{u,ul}$ will be fixed later in the proof.
  The changes of coordinates $c_{S_{dl}}, c_{S_{ul}}$ are defined as a translation of the cuboids to the origin of the coordinate system
  composed with rescaling to $[-1,1]^{2} \times [0,1]$. We label the first coordinate as exit, second as entry, third central.
  Again, if $\varepsilon_{L}$, $\delta_{s,dl}$, $\delta_{u,dl}$, $\delta_{u,ul}$, $\delta_{s,ul}$ are small,
  then the linear part of the vector field dominates the nonlinear part and conditions (S2b), (S3b) are satisfied for $\epsilon=0$
  and for $\epsilon>0$ small.
  Since our change of coordinates is of the form as in~\eqref{eq:centralform}, for $\epsilon>0$ small (S1a) follows from the inequalities~\eqref{P1:2}.

  We can now move on to proving the covering relation~\eqref{eq:PLcover}.
  The supports of the h-sets $X_{S_{dl},\text{ru}}$, $X_{S_{ul}, \text{ls}}$ are of the form:
  \begin{equation}
    \begin{aligned}
      |X_{S_{dl},\text{ru}}| &= \{ \varepsilon_{L} \} \times [-1-\delta_{s, dl}, -1+\delta_{s,dl} ] \times [ -1-\delta_{u,dl}, -1+\delta_{u,dl}], \\
      |X_{S_{ul}, \text{ls}}| &= [-\delta_{u, ul}, \delta_{u,ul} ] \times \{ 1-\varepsilon_{L} \} \times [ -1-\delta_{s,ul}, -1+\delta_{s,ul}].
    \end{aligned}
  \end{equation}
  In $X_{S_{dl},\text{ru}}$ the $x_{2}$ variable takes the role of the entry variable and $y$ takes the role of the exit one;
  in $X_{S_{ul}, \text{ls}}$ the variable $x_{1}$ is exit and $y$ is entry.

  Since covering relations are robust with respect to perturbations of the vector field (see Theorem 13 in \cite{GideaZgliczynski})
  it is enough to show them for $\epsilon=0$.
  From (P2) and (P3) we know that
  \begin{equation}\label{P2:mon1}
    \begin{aligned}
      P_{L}(\varepsilon_{L}, -1, -1) &= (0, 1-\varepsilon_{L}, -1), \\
      \frac{d}{dy} \pi_{x_1} P_{L}(\varepsilon_{L},-1,-1) &\neq 0,
    \end{aligned}
  \end{equation}
  and without loss of generality let us assume that $\pi_{x_1} P_{L}$ is increasing in the neighborhood of the point $(\varepsilon_{L}, -1, -1)$.
  That allows us to define two h-sets with a covering relation between them.
  The procedure is as follows:
  \begin{itemize}
    \item fix some $\delta_{s,ul}>0$.
    \item To comply with the covering condition (C1) from Lemma~\ref{covlemma} choose $\delta_{s,dl}>0$ and $\delta_{u,dl}>0$ so that
      \begin{equation}
        \pi_{y}P_{L}(|X_{S_{ul},\text{ls}}|) \subset (-1 -\delta_{s,ul}, -1 + \delta_{s,ul}).
      \end{equation}
      Now, provided $\delta_{s,dl}$ and $\delta_{u,dl}$ were chosen small enough, from~\eqref{P2:mon1}
      there exists $\varepsilon_{ul}>0$ such that
      \begin{equation}
        \begin{aligned}
          \pi_{x_{1}} P_{L}\left( \{ \varepsilon_{L} \} \times  [-1-\delta_{s, dl}, -1+\delta_{s,dl} ] \times \{ -1 -\delta_{u,dl} \} \right) &< \varepsilon_{ul} < 0,  \\
          \pi_{x_{1}} P_{L}\left( \{ \varepsilon_{L} \} \times  [-1-\delta_{s, dl}, -1+\delta_{s,dl} ] \times \{ -1 +\delta_{u,dl} \} \right) &> \varepsilon_{ul} > 0.
        \end{aligned}
      \end{equation}
    \item To fulfill (C2) it is enough to choose $\delta_{u,ul} \leq \varepsilon_{ul}$.
  \end{itemize}
  It is clear that we can choose $\varepsilon_{L}$ small enough and then perform the procedure above with $\delta$'s small
  in a way, that the diameter constriction~\eqref{diam} is satisfied.

  The same procedure is repeated for the isolating segments $S_{ur}$, $S_{dr}$; we will only introduce the notation for these segments,
  as they will be used later in the main part of the proof of Theorem~\ref{thm:heuristic}. Similarly to the left side segments
  we define them by giving the cuboid supports
  \begin{equation}
    \begin{aligned}
      |S_{ur}| &= [-\varepsilon_{R}, \varepsilon_{R}] \times [1-\delta_{s, ur}, 1+\delta_{s,ur} ] \times [ 1-\delta_{u,ur}, 1+\delta_{u,ur}], \\
      |S_{dr}| &= [-\delta_{u, dr}, \delta_{u,dr} ] \times [-1-\varepsilon_{R}, -1+\varepsilon_{R}] \times [ 1-\delta_{s,dr}, 1+\delta_{s,dr}], \\
      u(S_{ur}) &= s(S_{ur}) = u(S_{dr}) = s(S_{dr}) = 1,
    \end{aligned}
  \end{equation}
  and the coordinate changes $c_{S_{ur}}$, $c_{S_{dr}}$ are again simple translations and rescalings to $[-1,1]^{2} \times [0,1]$, so the first
  variable in the supports is the exit one and the second is entry.

  The supports of h-sets of interest $X_{S_{ur},\text{lu}}$, $X_{S_{ur}, \text{rs}}$ are as follows:
  \begin{equation}
    \begin{aligned}
      |X_{S_{ur},\text{lu}}| &= \{ -\varepsilon_{R} \} \times [1-\delta_{s, ur}, 1+\delta_{s,ur} ] \times [ 1-\delta_{u,ur}, 1+\delta_{u,ur}], \\
      |X_{S_{dr}, \text{rs}}| &= [-\delta_{u, dr}, \delta_{u,dr} ] \times \{ -1+\varepsilon_{R} \} \times [ -1-\delta_{s,dr}, -1+\delta_{s,dr}].
    \end{aligned}
  \end{equation}
  We will not go into details of determining $\delta_{u, ur}$, $\delta_{s,ur}$, $\delta_{u,dr}$, $\delta_{s,dr}$ and $\varepsilon_{R}$
  - the procedure is exactly the same as for the left side segments. The variable $y$ is the exit variable and $x_{2}$ is the entry variable in $X_{S_{ur},\text{lu}}$;
  as for $X_{S_{dr},\text{rs}}$, $x_{1}$ is the exit one and $y$ is entry.

  By taking  the minimum of all upper bounds on $\epsilon$'s throughout this lemma we obtain $\bar{\epsilon}_{0}$ and the proof is complete.
\end{proof}

We can now return to proving Theorem~\ref{thm:heuristic}. We import all the notation from the proof of the Lemma~\ref{lem:heuristic1}
and in particular assume that the isolating segments $S_{dl},S_{ul},S_{ur},S_{dr}$ and the respective h-sets can be chosen to be of the form given therein.

\begin{proof}[Proof of Theorem~\ref{thm:heuristic}]
  From Lemma~\ref{lem:heuristic1} for any given maximal corner segment diameter $\diam_{\text{max}}>0$
  we obtain a bound $\bar{\epsilon}_{0}$ on $\epsilon$'s and four isolating segments $S_{dl}, S_{ul}, S_{ur}, S_{dr}$ containing the respective points $\Gamma_{\alpha}$
  with covering relations between their respective faces.
  We set $\diam_{\text{max}}$ small enough to have
  \begin{align}
    f(x,y,0) \approx A_{1}(y)(x - [0,1]^{T} ),\ (x,y) \in \conv(|S_{ul}| \cup |S_{ur}|), \label{eq:ULin} \\
    f(x,y,0) \approx A_{-1}(y)(x + [0,1]^{T} ),\ (x,y) \in \conv(|S_{dl}| \cup |S_{dr}|),
  \end{align}
  so the higher order terms $h$ can be assumed negligible when checking isolation inequalities in these neighborhoods.

  Given our four corner isolating segments we are left with construction of
  two isolating segments $S_{u}$ and $S_{d}$ which connect the pairs $S_{ul}$, $S_{ur}$ and $S_{dr}$, $S_{dl}$, respectively.
  We will only construct $S_{u}$, the case of $S_{d}$ is analogous. The strategy is to first connect the pairs by segments,
  then, if necessary, decrease $\bar{\epsilon}_{0}$ to some smaller $\epsilon_{0}$ to obtain isolation.

  We introduce the following notation for rectangular sets around the upper branch of the slow manifold:
  \begin{equation}
    L_{u}(\delta_{u},\delta_{s},y):= [-\delta_{u},\delta_{u}] \times [1-\delta_{s},1+\delta_{s}] \times \{ y \}.
  \end{equation}

  We set
  \begin{equation}
    \begin{aligned}
      a_{u} &:= -1 + \delta_{s,ul}, \\
      b_{u} &:= 1 - \delta_{u,ur},
    \end{aligned}
  \end{equation}
  and we can assume that $a_{u}<b_{u}$.
  Now, we can define $S_{u}$ as a cuboid stretching from $X_{S_{ul,\text{out}}}$ to $X_{S_{ur,\text{in}}}$ as follows.
  For the support we put
  \begin{equation}\label{eq:SuForm}
    \begin{aligned}
    |S_{u}| &:=  \\
    &\bigcup_{\xi \in [0,1]} L_{u}\left(
                         (1-\xi)\delta_{u,ul} + \xi \delta_{u,ur},
                         (1-\xi)\delta_{s,ul} + \xi \delta_{s,ur},
                         (1-\xi)a_{u} + \xi b_{u}
                         \right).
   \end{aligned}
  \end{equation}
  We also set $u(S_{u})=s(S_{u}):=1$.
  There is no need for description of $c_{S_{u}}$ by precise formulas, so we only mention that it is a composition of
  \begin{itemize}
    \item a diffeomorphism which rescales each fiber $L_{u}(\cdot,y)$, given by fixing $y \in [a_{u},b_{u}]$, to $[-1,1] \times [-1,1]$,
    \item a rescaling in the central, $y$ direction from $[a_{u}, b_{u}]$ to $[a_{u}, a_{u}+1]$,
    \item a translation to the origin of the coordinate system.
  \end{itemize}
  As with the corner segments, $x_{1}$ is labeled as the exit direction, $x_{2}$ as entry, and $y$ as the central direction.
  Then one sees that equalities~\eqref{eq:UR} and~\eqref{eq:UL} are true.
  Condition (S1a) is a consequence of inequalities~\eqref{P1:2} for small $\epsilon$ , as the change of variables $c_{S_{u}}$
  in the central direction takes the form~\eqref{eq:centralform}. The upper bound for $\epsilon$'s
  given by $\bar{\epsilon}_{0}$ may need to be decreased at this step.

  It remains to check (S2b) and (S3b) and for that purpose we may need to further reduce $\bar{\epsilon}_{0}$.
  Normals to $S_{u}^{-}$ pointing outward of $|S_{u}|$ are given by
  \begin{equation}\label{eq:n+}
    n_{-}(x,y) = \left(\sgn x_{1}, 0, -\frac{\delta_{u,ur} - \delta_{u,ul}}{b_{u}-a_{u}} \right).
  \end{equation}
  From~\eqref{eq:ULin}, \eqref{eq:n+} and \eqref{P1:3}, \eqref{P1:4} for $(x,y) \in S_{u}^{-}$ we have
  \begin{equation}
    \begin{aligned}
    \langle (f,\epsilon g), n_{-} \rangle (x,y,\epsilon) &\approx \lambda_{u,1}(y) |x_{1}|  - \epsilon g(x,y,\epsilon) \frac{\delta_{u,ur} - \delta_{u,ul}}{b_{u}-a_{u}}  \\
    &> \delta_{1} |x_{1}| - \frac{\epsilon}{\delta_{1}} \frac{|\delta_{u,ur} - \delta_{u,ul}|}{b_{u}-a_{u}}
    \end{aligned}
  \end{equation}
  and the right-hand side is greater than 0 for $\epsilon \in (0, \bar{\epsilon}_{0}]$, $\bar{\epsilon}_{0}$ small enough, see Figure~\ref{slopeFig}.
  This proves (S2b).

  \begin{figure}
    \centering
    \begin{tikzpicture}[line cap=round,line join=round,>=latex,x=2.5mm,y=2.5mm]
 
  \draw[color=black] (-0.,0.) -- (20,0.);
  \foreach \x in {0,...,10}
  \draw[shift={(2*\x,0)},color=black] (0pt,1pt) -- (0pt,-1pt) node[below] {};

  \draw[color=black] (0.,0.) -- (0.,10);
  \foreach \y in {0,...,5}
  \draw[shift={(0,2*\y)},color=black] (2pt,0pt) -- (-1pt,0pt) node[left] {};
 
  \clip(0,-2) rectangle (40,12);

\begin{scope}[spy using outlines=
      {magnification=4, size=90pt, rounded corners, connect spies}]

  \fill [color=brown!25] (2.,4.) -- (2.,6.) -- (18.,8.) -- (18.,2.);
  \draw [color=brown] (2.,4.) -- (18.,2.);
  \draw [color=brown] (2.,6.) -- (18.,8.);
  \draw [color=brown] (2.,4.) -- (2.,6.);
  \draw [color=brown] (18.,8.) -- (18.,2.);

  \draw [->,color=brown] (10.,7.) -- (11.0,9.2);
  \draw [->,color=brown] (6.,6.5) -- (7.0,8.7);
  \draw [->,color=brown] (14.,7.5) -- (15.0,9.7);
 
  \draw [->,color=brown] (10.,3.) -- (11.0,0.8);
  \draw [->,color=brown] (6.,3.5) -- (7.0,1.3);
  \draw [->,color=brown] (14.,2.5) -- (15.0,0.3);
 
  \draw [color=darkgreen,semithick,domain=0.:20.] plot(\x,{5});
  \draw [->,color=brown] (9.99,5.) -- (10.01,5.);

  \spy [black] on (10.5,7.8) in node at (30.,5.);

\end{scope}

  \draw [->,color=black] ( 30.-10.5*4.+10.*4., 5.-7.8*4.+7.*4) -- ( 30.-10.5*4.+10.*4. , 5.-7.8*4.+9.2*4);
  \draw [->,color=black] ( 30.-10.5*4.+10.*4., 5.-7.8*4.+7.*4) -- ( 30.-10.5*4.+11.*4. , 5.-7.8*4.+7.*4);
 
  \draw [color=black,dashed] ( 30.-10.5*4.+10.*4., 5.-7.8*4.+9.2*4) -- ( 30.-10.5*4.+11.*4. , 5.-7.8*4.+9.2*4);
  \draw [color=black,dashed] ( 30.-10.5*4.+11.*4., 5.-7.8*4.+7.*4) -- ( 30.-10.5*4.+11.*4. , 5.-7.8*4.+9.2*4);

  \draw ( 30.-10.5*4.+11.*4. , 5.-7.8*4.+7.*4) node[anchor=north] {\scriptsize{$\displaystyle\frac{dy}{dt}$}};
  \draw ( 30.-10.5*4.+10.*4. , 5.-7.8*4.+9.2*4 - 1) node[anchor=east] {\scriptsize{$\displaystyle\frac{dx_{1}}{dt}$}};

  \draw (10.,0.) node[anchor=north] {\scriptsize{$y$}};
  \draw (0.,9.5) node[anchor=west] {\scriptsize{$x_{1}$}};

\end{tikzpicture}
    \caption{Isolation in segments around the slow manifold for small~$\epsilon$. The fast component of the vector field dominates the slow one
    and offsets the influence of the slope on isolation inequalities.}\label{slopeFig}
  \end{figure}
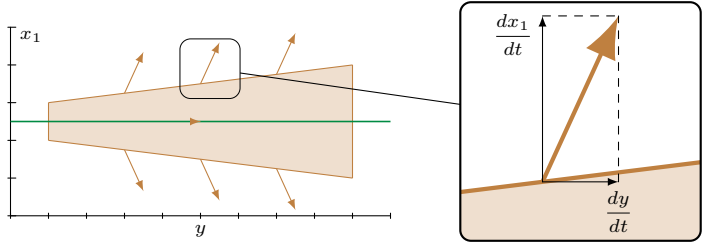

  Verifying (S3b) goes along the same lines, the expression for outward normals is
  \begin{equation}\label{eq:n-}
    n_{+}(x,y) = \left(0, \sgn(x_{2}-1), -\frac{\delta_{s,ur} - \delta_{s,ul}}{b_{u}-a_{u}} \right),
  \end{equation}
  and one readily checks that
  \begin{equation}
    \begin{aligned}
      \langle (f,\epsilon g), n_{+} \rangle (x,y,\epsilon) &\approx \lambda_{s,1}(y) |x_{2}-1|  - \epsilon g(x,y,\epsilon) \frac{\delta_{s,ur} - \delta_{s,ul}}{b_{u}-a_{u}}  \\
      &< -\delta_{1} |x_{2} - 1| + \frac{\epsilon}{\delta_{1}} \frac{|\delta_{s,ur} - \delta_{s,ul}|}{b_{u}-a_{u}}<0, \ \epsilon \in (0,\bar{\epsilon}_{0}],
    \end{aligned}
  \end{equation}
  decreasing $\bar{\epsilon}_{0}$ if necessary.
  
  The only difference in the construction of $S_{d}$ is that the recipe for $c_{S_{d}}$ has to include a flip in the $y$ direction
  so we can have $X_{S_{d},\text{in}}=X_{S_{dr},\text{out}}$ and $X_{S_{d},\text{out}}=X_{S_{dr},\text{in}}$.
  By taking minimum of all upper bounds for $\bar{\epsilon}_{0}$ throughout the proof we obtain the desired $\epsilon_{0}$.
\end{proof}

\section{Computer assisted proofs of Theorems~\ref{thm:main1}, \ref{thm:main2} and \ref{thm:main3}}\label{sec:implementation}

Most of the numerical values in this section  are given as approximations with 8 significant digits.
An exception to that are the equation parameters, which are exact.
Therefore, computations that are described below are not actually rigorous, but the programs
execute rigorous computations for values close to the ones provided.
Actual values in the program used for rigorous computations are intervals with double precision endpoints -- we decided
that writing their binary representations would obscure the exposition.
If needed, exact values can always be retrieved by the reader from the programs.
If interval is very narrow and used to represent only one particular value,
such as a coordinate of a point, we just write a single value instead.

Rigorous and nonrigorous integration, computation of enclosures of Poincar\'e maps defined between affine sections
and their derivatives, linear interval algebra and interval arithmetics is handled
by routines from the CAPD libraries~\cite{CAPD} and we do not discuss it here.
For rigorous integration we used the Taylor integrator provided in CAPD.

The source code executing the proofs is available at the author's webpage~\cite{Czechowski}.
Our exposition loosely follows what is performed by our programs.
The best way to examine the proof in detail is to look into the source code files.  
For most objects we use the same notation in the description as in the source code,
however occasionally these two differ. 
In such cases identifying the appropriate variables should be easy from the context and from the comments
left in the source code files.

For a given vector object $\texttt{x}$, by $\texttt{x[i-1]}$ we denote the i-th coordinate of $\texttt{x}$.
We will denote the right-hand side of~\eqref{FhnOde} by $F$.

We recall that the fast subsystem of~\eqref{FhnOde} is given by: 
\begin{equation}\label{eq:fastsub}
     \begin{aligned}
       u'&=v, \\
       v'&=0.2(\theta v - u(u-0.1)(1-u) + w).
     \end{aligned}
\end{equation}

Unless otherwise specified, the half-open parameter intervals $\epsilon \in (0,\epsilon_0]$ is treated 
in computations by enclosing it in a closed interval $[0,\epsilon_0]$.
The assumption $\epsilon \neq 0$ is utilized only in verification of condition (S1a) for isolating segments (see Subsection~\ref{subsubsec:segments}).

\subsection{General remarks}

\subsubsection{H-sets and covering relations}\label{subsec:covrel}

  Each h-set $X$ appearing in our program is two-dimensional with $u(X)=s(X):=1$ and can be identified with a parallelogram lying 
  within some affine section. Verification of covering relations is done exclusively by means of Lemmas~\ref{covlemma}, \ref{backcovlemma}
  (see also Remark~\ref{rem:covlemma}). 
  The procedure is relatively straightforward and has been described in detail in several papers, see for example~\cite{WilczakZgliczynski2},
  therefore we do not repeat it here. We only mention that, if needed, the procedure may include subdivision of h-sets.
  This reduces the wrapping effect, but greatly increases runtimes
  (note that wrapping is already significantly reduced by use of the Lohner algorithm within the CAPD integration routines).
  Given an h-set $X$ we want to integrate with subdivision, we introduce an integer parameter $\texttt{div}$. It indicates into how many  
  equal intervals we divide the set in each direction. For example, setting $\texttt{div}=20$ means that we 
  integrate 20 pieces of $X^{-,l}, X^{+,r}$ and 400 pieces of $|X|$ to evaluate the image of the Poincar\'e map.
  In the outlines of our proofs we will indicate the values of $\texttt{div}$ to emphasize which parts of the proof involved time-consuming computations.

  \subsubsection{Segments}\label{subsubsec:segments}

  Our segments are cuboids placed along the slow manifold $C_{0}$ so that a part of it belonging to the singular orbit is enclosed by them. 
  For each segment $S$ we have $u(S)=s(S)=1$.
  All of the segments have the property~\eqref{eq:centralform}, with 
  the slow variable $w$ serving as the central variable. Therefore, to establish (S1a) it is enough to show~\eqref{eq:sgna},
  which is equivalent to verifying either $u>w$ or $u<w$ for all points of the segment. 
  This in particular allows us to handle half-open ranges $\epsilon \in (0,\epsilon_0]$ computationally,
  as at this point we effectively factor out the small parameter.
  
  Confirming (S2b) and (S3b) is simple, as all of the faces lie in affine subspaces.
  As the exit/entry directions we take the approximate directions of the unstable/stable bundles of $C_{0}$.
  Similarly as for verification of covering relations, we subdivide the sets $S^{-}$, $S^{+}$ before evaluating isolation inequalities. 
  The normals are constant within a face, the actual benefit is in reduction of wrapping in evaluation of the right-hand side of the vector field 
  over a face.

  Our segments are rigid and the stable and unstable bundles of $C_{0}$ actually slightly revolve as we travel along the manifold branches.
  By using a single segment to cover a long piece of the branch we could not expect conditions (S2b), (S3b) to hold anymore.
  Therefore we use sequences of short segments, the position of each is well-aligned with the unstable/stable bundles of $C_{0}$ - we call them \emph{chains of segments}.
  They are simply sequences of short segments placed one after another, so a longer piece of the slow manifold can be covered. 
  We require that each segment $S_{i}$ from a chain is an isolating segment and that
  for each two consecutive segments $S_{i}$, $S_{i+1}$ in the chain the transversal section $\Sigma_{i,\text{out}}$ containing the face $S_{i,\text{out}}$
  coincides with the section $\Sigma_{i+1,\text{in}}$ containing $S_{i+1,\text{in}}$ and there is a covering relation by the identity map
  \begin{equation}\label{eq:chain}
    X_{S_{i},\text{out}} \longlongcover{\id|_{\Sigma_{i+1,\text{in}}}}  X_{S_{i+1},\text{in}}.
  \end{equation}  
  In other words, the covering relation is realized purely by the change of coordinates $c_{X_{S_{i+1},\text{in}}}\circ c_{X_{S_{i},\text{out}}}^{-1}$.
  For purposes of checking the assumptions of Theorem~\ref{thm:2}, we treat the identity map as a special case of a Poincar\'e map, see Subsection~\ref{subsec:poinc}.

  A topic we think is worth exploring, is whether chains of segments are a viable alternative to numerical integration
  in computer assisted proofs for differential inclusions
  arising from evolution PDEs; or of stiff systems where one has a good guess for the orbit
  from a nonrigorous stiff integrator. In future we plan to conduct numerical simulations
  to get more insight on that matter.
  
  \paragraph{\emph{Representation of segments}}\label{par:segments}
    
  Each segment \texttt{S} in our programs can be represented by 
  \begin{itemize}
    \item two points $\texttt{Front},\texttt{Rear} \in \rr^{3}$, serving as approximations of points on $C_{0}$,
    \item a 2x2 real matrix $\texttt{P}$ representing the rotation of the segment around the slow manifold (this does not need to be a rotation matrix) --
      it will contain approximate eigenvectors of the linearization of the fast subsystem~\eqref{eq:fastsub} at a selected point from $C_{0} \cap \texttt{S}$,
    \item four positive numbers $\texttt{a},\texttt{b},\texttt{c},\texttt{d}>0$ -- the pair $(\texttt{a},\texttt{b})$ describes how to stretch or narrow the exit and the entry widths of 
      the front face of the segment, respectively, and the pair $(\texttt{c},\texttt{d})$ does the same for the rear face.
  \end{itemize}

  For a pair of points $(a,b)$ and a 2x2 matrix $A$ we define an auxiliary linear map $\Pi_{a,b,A}: \rr^{2} \to \rr^{3}$ by
  \begin{equation}
    \Pi_{a,b,A}(x_{u},x_{s}) = \left[ {\begin{array}{c} A \\ 0 \end{array} } \right] \left[ {\begin{array}{c} ax_{u} \\ bx_{s} \end{array} } \right] .
  \end{equation}
  Our segment is then defined by
  \begin{equation}
    \begin{aligned}
      c_{\texttt{S}}^{-1}(x_{u},x_{s},x_{\mu})  &= (1-x_{\mu})  ( \texttt{Front} + \Pi_{\texttt{a},\texttt{b},\texttt{P}}(x_{u},x_{s}) )
      \\ &+ x_{\mu}  ( \texttt{Rear} + \Pi_{\texttt{c},\texttt{d},\texttt{P}}(x_{u},x_{s}) ).
   \end{aligned}
  \end{equation}
  
  For such segments one can define their front \& rear faces and the left/right entrance/exit faces $X_{\texttt{S},\text{in}}, X_{\texttt{S},\text{out}},
  X_{\texttt{S},\text{ls}}, X_{\texttt{S},\text{rs}}, X_{\texttt{S},\text{lu}}, X_{\texttt{S},\text{ru}}$ as in Subsection~\ref{subsec:segments}.

  \paragraph{Construction of chains of segments}\label{subsec:chains}
  Our recipe for creating a chain of segments  \({\bf S} \)=$\{\texttt{S}_{\texttt{i}}\}_{\texttt{i} \in \{\texttt{1}, \dots, \texttt{N} \} }$ 
  along a branch of the slow manifold is as follows.
  We assume we are given two disjoint segments $\texttt{S}_{\texttt{0}}$, $\texttt{S}_{\texttt{N+1}}$ 
  positioned along the slow manifold $C_{0}$ that we would like to connect by the chain.
  Without loss of generality we may assume that we are on the upper branch of $C_{0}$, so
  $|\texttt{S}_{\texttt{0}}|$ is to the left of $|\texttt{S}_{\texttt{N+1}}|$ in terms of the $w$ coordinate.
  
  For each segment $\texttt{S}_{\texttt{i}}$ we will use 
  its representation 
  \begin{equation}
    (\texttt{Front}_{\texttt{i}}, \texttt{Rear}_{\texttt{i}}, \texttt{P}_{\texttt{i}}, \texttt{a}_{\texttt{i}}, \texttt{b}_{\texttt{i}}, 
    \texttt{c}_{\texttt{i}}, \texttt{d}_{\texttt{i}})
  \end{equation}
  given in Paragraph~\ref{par:segments}. Wherever we mention an identity map $\id$
  between two h-sets, we mean the identity map restricted to the common transversal section.

  Our chain will connect the segments $\texttt{S}_{\texttt{0}}$, $\texttt{S}_{\texttt{N+1}}$ 
  in the sense that
  \begin{align}
    X_{\texttt{S}_{\texttt{0}}} &\cover{\id} X_{\texttt{S}_{\texttt{1}},\text{in}},\label{eq:firstcover} \\
    X_{\texttt{S}_{\texttt{N}},\text{out}} &= X_{\texttt{S}_{\texttt{N+1}},\text{in}}.\label{eq:lastcover}
  \end{align}
  We remark that we connect the faces of two segments as this is what we later do in the proof of Theorem~\ref{thm:main1}, but with little changes these 
  could as well be any two parallelogram h-sets placed on sections crossing $C_{0}$.

  Creating a chain is a sequential process akin to rigorous integration with a fixed time step; to construct the segment $\texttt{S}_{\texttt{i}}$ we
  need to know the representation of the segment $\texttt{S}_{\texttt{i-1}}$,
  If $1 \leq i<N$ we define the segment $\texttt{S}_{\texttt{i}}$ as follows
  \begin{itemize}
    \item we set $\texttt{Front}_{\texttt{i}}:= \texttt{Rear}_{\texttt{i-1}}$.
    \item The point $\texttt{Rear}_{\texttt{i}}$ is constructed by locating an (approximate) equilibrium of the fast subsystem~\eqref{eq:fastsub} with Newton's method
      for 
      \begin{equation}
        w := \texttt{Front}_{\texttt{i}}\texttt{[2]} + \texttt{1}/\texttt{N},
      \end{equation}
      and then embedding it into the 3D space by adding the value of $w$ as the third coordinate.
    \item Columns of the matrix $\texttt{P}_{\texttt{i}}$ are set as approximate eigenvectors of linearization of~\eqref{eq:fastsub} at $\texttt{Rear}_{\texttt{i}}$.
    \item For $(\texttt{a}_{\texttt{i}},\texttt{b}_{\texttt{i}})$ we put
      \begin{equation}\label{eq:shrinkAndExpand}
        \begin{aligned}
          \texttt{a}_{\texttt{i}} &:= \texttt{c}_{\texttt{i-1}}/\texttt{factor},\\
          \texttt{b}_{\texttt{i}} &:= \texttt{factor} \times \texttt{d}_{\texttt{i-1}}
        \end{aligned}
      \end{equation}
      where $\texttt{factor}$ is a real number greater than 1. In our case hardcoding $\texttt{factor}:=1.05$ gave good results.
    \item For $(\texttt{c}_{\texttt{i}},\texttt{d}_{\texttt{i}})$ we put
      \begin{equation}
        \begin{aligned}
          \texttt{c}_{\texttt{i}} &:= \frac{\texttt{i}}{\texttt{N}} \texttt{a}_{\texttt{N+1}} + \frac{\texttt{N-i}}{\texttt{N}} \texttt{c}_{\texttt{0}}, \\
          \texttt{d}_{\texttt{i}} &:= \frac{\texttt{i}}{\texttt{N}} \texttt{b}_{\texttt{N+1}} + \frac{\texttt{N-i}}{\texttt{N}} \texttt{d}_{\texttt{0}}.
        \end{aligned}
      \end{equation}
  \end{itemize}
  For the segment $\texttt{S}_{\texttt{N}}$ we proceed by the same rules with the exception that
  we set 
  \begin{equation}
    \begin{aligned}
    \texttt{Rear}_{\texttt{N}}&:= \texttt{Front}_{\texttt{N+1}}, \\
    \texttt{P}_{\texttt{N}}&:=\texttt{P}_{\texttt{N+1}},
  \end{aligned}
  \end{equation}
  to comply with~\eqref{eq:lastcover}.
  
  For such $\texttt{S}_{\texttt{i}}$ we check the conditions (S1a), (S2b), (S3b) and the covering relation
  $X_{\texttt{S}_{\texttt{i-1}},\text{out}} \cover{\id} X_{\texttt{S}_{\texttt{i+1}},\text{in}}$.
  Then, we proceed to the next segment.

  For $\texttt{N}$ large it is easy to satisfy (S2b), (S3b) for each short segment $\texttt{S}_{\texttt{i}}$,
  as each $\texttt{P}_{\texttt{i}}$ approximates the directions of the unstable and stable bundle of $C_{0}$.
  Moreover, because $\texttt{Rear}_{\texttt{i-1}}$, $\texttt{Rear}_{\texttt{i}}$ are close, for each $\texttt{i} \in \{1,\dots,\texttt{N}\}$ we have
  \begin{equation}
    \texttt{P}_{\texttt{i}} \circ \texttt{P}_{\texttt{i-1}} \approx \id .
  \end{equation}
  Thus, for the identity map in the h-sets variables we get
  \begin{equation}
    \id_{s} = c_{X_{\texttt{S}_{\texttt{i}}, \text{in}} }  \circ c_{X_{\texttt{S}_{\texttt{i-1}}, \text{out}} }^{-1}
    \approx \left[ {\begin{array}{cc} \texttt{factor} & 0 \\ 0 & \frac{1}{\texttt{factor}} \end{array} } \right],
  \end{equation}
  and there are good odds that by use of Lemma~\ref{covlemma} we can succeed in satisfying the conditions~\eqref{eq:firstcover} and~\eqref{eq:chain}.

\subsection{Proof of Theorem~\ref{thm:main1}}\label{subsec:thm11}
To deduce the existence of a periodic orbit we check the assumptions of Theorem~\ref{thm:2}.
Our strategy resembles the one given for the model example in Subsection~\ref{sec:covslowfast}, which was portrayed in Figure~\ref{schemeFig}.
The main modifications are due to numerical reasons:
\begin{itemize}
  \item we introduce two additional sections on the trajectories of the fast subsystem heteroclinics, in some distance from the corner segments,
  \item instead of the ``long'' segments $S_{u}$, $S_{d}$ we place two chains of segments along the slow manifold connecting the corner segments
    -- see Paragraph~\ref{subsec:chains}.
\end{itemize}

We divide the parameter range $\epsilon \in (0,1.5 \times 10^{-4}]$ into 
two subranges $(0,10^{-4}]$ and $[10^{-4}, 1.5 \times 10^{-4}]$. The procedure is virtually the same for both ranges
and the only reason for subdivision is that the proof would not succeed for the whole range $\epsilon \in (0,1.5 \times 10^{-4}]$ in one go, due to an accumulation of overestimates.
Following steps are executed by the program for both ranges:

\begin{enumerate}[leftmargin=*]
  \item  First, we compute four ``corner points'' 
    \begin{equation}
      \begin{aligned}
        \texttt{GammaDL} &= (-0.10841296,0,0.025044220) \approx (\Lambda_d(w_{*}),w_{*}), \\
        \texttt{GammaUL} &= (0.97034558,0,0.025044220) \approx (\Lambda_u(w_{*}),w_{*}), \\
        \texttt{GammaUR} &= (0.84174629,0,0.098807631) \approx (\Lambda_u(w^{*}),w^{*}), \\
        \texttt{GammaDR} &= (-0.23701225,0,0.098807631) \approx (\Lambda_d(w^{*}),w^{*}).
      \end{aligned}
    \end{equation}
    This computation is nonrigorous; in short we perform a shooting with $w$ procedure for the fast subsystem~\eqref{eq:fastsub}
    from first-order approximations of stable and unstable manifolds of the equilibria to an intermediate section;
    this is an approach like in~\cite{GuckenheimerKuehn}.
    The matrices given by the approximate eigenvectors of the linearization of the fast subsystem at points $\texttt{GammaDL}$,
    $\texttt{GammaUR}$, $\texttt{GammaUL}$, $\texttt{GammaDR}$ are 
    \begin{equation}
      \begin{aligned}
        \texttt{PDL} &= \texttt{PUR} = \left[ {\begin{array}{cc} 1 & 1 \\ 0.34113340 & -0.21913340 \end{array} } \right], \\
        \texttt{PUL} &= \texttt{PDR} = \left[ {\begin{array}{cc} 1 & 1 \\ 0.46313340 & -0.34113340 \end{array} } \right],
      \end{aligned}
    \end{equation}
    respectively.
  \item  We initialize four ``corner segments'' \texttt{DLSegment}, \texttt{ULSegment}, \texttt{URSegment} and \texttt{DRSegment} with data
    from Table~\ref{table:corseg} as described in Paragraph~\ref{par:segments} and check that they are isolating segments. 
    For checking the isolation formulas (S2b), (S3b) we subdivide enclosures of each of the respective faces of the exit and the entrance set into $150^{2}$ equal pieces. 
    \begin{table}[ht]
      \begin{center}
      \begin{tabular}{ | l | l | l | l |}
        \hline
        Segment &  \texttt{Front}, \texttt{Rear} & \texttt{P} & $(\texttt{a},\texttt{b}) = (\texttt{c},\texttt{d})$  \\ \hline
        \texttt{DLSegment} & $\texttt{GammaDL} \pm (0,0,0.005)$   & \texttt{PDL} & $(0.015, 0.012)$ \\ \hline
        \texttt{ULSegment} & $\texttt{GammaUL} \mp (0,0,0.005)$  & \texttt{PUL} & $(0.01, 0.015)$ \\ \hline
        \texttt{URSegment} & $\texttt{GammaUR} \mp (0,0,0.005)$   & \texttt{PUR} & $(0.029, 0.019)$ \\ \hline
        \texttt{DRSegment} & $\texttt{GammaDR} \pm (0,0,0.005)$  & \texttt{PDR} & $(0.007, 0.03)$ \\ \hline
      \end{tabular}\caption{Initialization data for the four corner segments. The pair $(\texttt{a},\texttt{b})$
      determines the exit/entry direction widths of the segments and the difference $|\texttt{Front[2]} - \texttt{Rear[2]}|$ the central
        direction width.}\label{table:corseg}
      \end{center}
    \end{table}
  \item Unlike in the model example -- Lemma~\ref{lem:heuristic1},
    the two transversal sections we integrate to are positioned within some distance from the corner segments.
    We move away from the segments because integration too close to slow manifolds poses a numerical problem -- 
    the vector field slows too much and the routines for verifying transversality fail.
    More precisely, a section $\texttt{leftSection}$ is placed on the integration path between the segments \texttt{DLSegment}, \texttt{ULSegment}
    and a section $\texttt{rightSection}$ on the path between \texttt{URSegment} and \texttt{DRSegment}. 

    We define the following Poincar\'e maps:
    \begin{itemize}
      \item $\texttt{pmDL}$ is the Poincar\'e map from $X_{\texttt{DLSegment}, \text{ru}}$ to $\texttt{leftSection}$,
      \item $\texttt{pmUL}$ is the Poincar\'e map from a subset of $\texttt{leftSection}$ to the affine section containing $X_{\texttt{ULSegment}, \text{ls}}$,
      \item $\texttt{pmUR}$ is the Poincar\'e map from $X_{\texttt{URSegment},\text{lu}}$ to $\texttt{rightSection}$,
      \item $\texttt{pmDR}$ is the Poincar\'e map from a subset of $\texttt{rightSection}$ to the affine section containing $X_{\texttt{DRSegment}, \text{rs}}$.
    \end{itemize}

    Let now us briefly describe what covering relations we verify.

    We integrate the h-set $X_{\texttt{DLSegment}, \text{ru}}$ to $\texttt{leftSection}$ and create an h-set
    $\texttt{midLeftSet} \subset \texttt{leftSection}$ so that it is $\texttt{pmUL}$-covered by a small margin by $X_{\texttt{DLSegment}, \text{ru}}$,
    see Lemma \ref{covlemma}. 
    Then, we integrate the h-set $X_{\texttt{ULSegment}, \text{ls}}$ backward in time to $\texttt{leftSection}$ and 
    verify that $\texttt{midLeftSet}$ $\texttt{pmUL}$-backcovers $X_{\texttt{ULSegment}, \text{ls}}$.

    The h-set $X_{\texttt{URSegment}, \text{lu}}$ is integrated to $\texttt{rightSection}$, and, as in the previous case, we define
    an h-set $\texttt{midRightSet} \subset \texttt{rightSection}$, such that it is $\texttt{pmUR}$-covered 
    by $X_{\texttt{URSegment}, \text{lu}}$.
    Then, we integrate the h-set $X_{\texttt{DRSegment}, \text{rs}}$ backward in time to $\texttt{rightSection}$ and 
    verify that $\texttt{midRightSet}$ $\texttt{pmDR}$-backcovers $X_{\texttt{DRSegment}, \text{rs}}$.

    Altogether, we have the following covering relations:
    \begin{equation}
      \begin{aligned}
        X_{\texttt{DLSegment},\text{ru}} &\cover{\texttt{pmDL}} \texttt{midLeftSet} \backcover{\texttt{pmUL}} X_{\texttt{ULSegment}, \text{ls}}, \\
        X_{\texttt{URSegment},\text{lu}} &\cover{\texttt{pmUR}} \texttt{midRightSet} \backcover{\texttt{pmDR}} X_{\texttt{DRSegment}, \text{rs}}.
      \end{aligned}
    \end{equation}
    Parameter $\texttt{div}$ describing partitioning of h-sets for the rigorous integration was set to 20.

  \item To close the loop, we connect the h-sets $X_{\texttt{ULSegment},\text{out}}$ and $X_{\texttt{URSegment},\text{in}}$ by a chain of segments
    \({\bf UpSegment} \) and $X_{\texttt{DRSegment},\text{out}}$ and $X_{\texttt{DLSegment},\text{in}}$ by a chain of segments \({\bf DownSegment} \)
    as described in Paragraph~\ref{subsec:chains}. 
    The number of isolating segments in each chain $\texttt{N}$ is set to 80.  
    For verification of the isolation conditions (S2b), (S3b) in each chain we partition the enclosures of each of the
    faces of their exit and entrance sets into $110^{2}$ equal pieces.
\end{enumerate}

Many choices of parameters in the program were arbitrary; of most importance are the exit/entry/central direction widths of the corner segments
given in Table~\ref{table:corseg}. 
For very small $\epsilon$ ranges (such as $\epsilon \in (0,10^{-8}]$, $\epsilon \in (0,10^{-7}]$)
various reasonable guesses would yield successful proofs, due to the eminent fast-slow structure of the equations (cf. Section~\ref{sec:covslowfast}).
However,  the range of possibilities would diminish as the upper bound on $\epsilon$ was increased,
and finding values for our final $\epsilon$ ranges was a long trial-and-error process. 
This can be explained as follows.
For large $\epsilon$'s the periodic orbit moves away from the singular orbit,
around which we position our sequence of segments and h-sets. Moreover, the hyperbolicity of the slow manifold, which plays a vital role in the creation
of the periodic orbit near the singular limit , decreases as $\epsilon$ increases.
Each time a value of a program parameter was adjusted in an attempt to succeed with a particular part of the proof,
it was possible that another part would fail. For example, increasing the central direction
widths of the corner segments facilitated the verification of covering relations for the Poincar\'e maps; but too much of an increase 
made isolation checks for the corner segments fail; increasing the exit direction widths of $\texttt{ULSegment}$, $\texttt{DRSegment}$
made the exit direction isolation checks (S2b) in segments of \({\bf UpSegment} \), \({\bf DownSegment} \)
easier to satisfy but had a negative effect on the covering relations; etc. 
It was particularly difficult to simultaneously obtain both isolation for the corner segments and covering relations in the fast regime.

By repeating the process of
\begin{itemize}
  \item trying to slightly increase the $\epsilon$ range,
  \item executing the program with given parameters,
  \item should the proof fail, changing the parameters in favor of the inequalities which were not fulfilled,
    at the cost of the ones where we still had some freedom,
\end{itemize}
we obtained a relatively large range of $\epsilon \in (0,1.5 \times 10^{-4}]$, for which the inequalities needed in our assumptions
hold by a very small margin. In particular, the right bound $1.5 \times 10^{-4}$ was large enough 
to include it in a continuation-type proof of Theorem~\ref{thm:main2}, performed in reasonable time and without using multiple precision.

It would certainly be helpful to have that procedure automated.
As one can see, we are effectively dealing with a~\emph{constraint satisfaction problem} (see~\cite{constraint})
where variables, given by the program parameters have to be chosen to satisfy constraints given by inequalities coming from covering relations
and isolation conditions.
In addition, verification of whether constraints are satisfied requires execution of the program and is fairly expensive computationally.
A suitable algorithm for adjustment of parameters to satisfy the constraints would allow to extend the range
of the small parameter $\epsilon \in (0,\epsilon_0]$ even further.
We remark that obtaining a large value of $\epsilon_0$ in this proof is crucial for achieving this parameter value 
with further validated continuation algorithms (like the one in Theorem~\ref{thm:main2})
This is due to the fact that the period of this unstable orbit is roughly proportional to $\frac{1}{\epsilon}$ 
(see Table~\ref{table:continuation}) which makes it virtually impossible to track the orbit by numerical integration methods
for very small $\epsilon$.

\subsection{Proof of Theorem~\ref{thm:main2}}\label{subsec:continuation}
Our strategy is to check the assumptions of Theorem~\ref{cov:sequence}
for a sequence of h-sets placed along a numerical approximation of an actual periodic orbit (not the singular orbit).
This can succeed for a very small range of $\epsilon$, then we need to recompute our approximation,
ending up with a continuation procedure.

We start by generating a numerical approximation vector of 212 points from the periodic orbit for $\epsilon = 0.001$ 
obtained from a nonrigorous continuation with MATCONT~\cite{MATCONT}.
From there we perform two continuation procedures, down to $\epsilon = 1.5 \times 10^{-4}$ and up to $\epsilon = 0.0015$.
Each step of the continuation consists of a routine $\texttt{proveExistenceOfOrbit}$ performed on equation~\eqref{FhnOde}
with an interval $\texttt{currentEpsRange}$ of width $\texttt{incrementSize}$ substituted for $\epsilon$.
It can be described by the following steps.

\begin{enumerate}[leftmargin=*]
  \item Given an approximation vector \( \bf initialGuess \) of $\texttt{pm\_count}$ points of the periodic orbit
    obtained from the previous continuation step (in the first step this is the MATCONT-precomputed approximation),
    we initialize a Poincar\'e section $\texttt{section}_{\texttt{i}}$ for each of the points $\texttt{initialGuess}_{\texttt{i}}$ by setting
    the origin of the section as the given point and its normal vector as the vector as the difference between
    the current and the next point of the approximation.
    Then, we refine the approximation by a nonrigorous $C^{1}$
    computation of Poincar\'e maps and their derivatives and application of Newton's method to the system of the form~\eqref{eq:problemform}.
    Note that we set the normal vector to be the difference between the current and the next point on the orbit rather than the direction
    of the vector field, as the latter can be misleading close to the strongly hyperbolic slow manifold.
    Let us denote by \( \bf correctedGuess \) the Newton-corrected approximation.
  \item Each $\texttt{section}_{\texttt{i}}$ is equipped with a coordinate system used for
    the purposes of covering by h-sets as described in Subsection~\ref{subsec:covrel}. The first column corresponding
    to the exit direction is obtained by a nonrigorous $C^{1}$ integration of any non-zero normalized vector by the variational equation of~\eqref{FhnOde}
    along the approximated orbit until it stabilizes; and then propagating it for each $\texttt{i}$ by one additional integration loop.
    Similarly, the second column (corresponding to the entry direction) is computed by backward integration of any non-zero normalized vector until it stabilizes and
    further propagation by inverse Poincar\'e maps for each $\texttt{i}$. 
    Then, we project these columns to the orthogonal complement of $\texttt{normal}_{\texttt{i}}$.
  \item Let $\texttt{pm}_{\texttt{i}}$ be a Poincar\'e map from a subset of the section 
    $\texttt{section}_{\texttt{i}}$ to the section $\texttt{section}_{\texttt{i+1}\bmod \texttt{pm\_count}}$.
    We initialize a sequence of h-sets $X_{i}$ on sections $\texttt{section}_{\texttt{i}}$
    by specifying $X_{0}$ and generating such $X_{1},\dots,X_{\texttt{pm\_count}-1}$,
    so the covering relations $X_{i} \cover{\texttt{pm}_{\texttt{i}}} X_{i+1}$, $\texttt{i} \in \{0, \dots, \texttt{pm\_count} - 2\}$ hold by a small margin.     
    The periodic orbit is strongly hyperbolic and the h-sets will quickly grow in the exit direction.
    Therefore we put an additional upper bound on the growth of the exit direction
    to prevent overestimates coming from integrating too large h-sets.
    For rigorous integration of h-sets the parameter $\texttt{div}$ was set to 5.
  \item We check that the following covering relation holds
    \begin{equation}
      X_{\texttt{pm\_count} - 1} \longlongcover{\texttt{pm}_{\texttt{pm\_count} - 1}} X_{0}.
    \end{equation}
    This implies the existence of the periodic orbit of for $\epsilon \in \texttt{currentEpsRange}$, by Theorem~\ref{cov:sequence}.
  \item We produce a new \( \bf initialGuess \) for the next step of continuation by removing the points from the approximate orbit
    where the integration time between respective sections is too short and adding them where it is too long.
    This way we can adapt the number of sections to the period of the orbit.
  \item We move the interval $\texttt{currentEpsRange}$ and proceed to the next step of the continuation.
\end{enumerate}

The continuation starts with $\texttt{incrementSize}=10^{-6}$ and the size (diameter) of the h-set $X_{0}$ of order $10^{-6}$ and both of these parameters vary throughout the proof.
If any step of $\texttt{proveExistenceOfOrbit}$ fails - for example Newton's method does not converge or there is no covering
between the h-sets, the algorithm will try to redo all the steps for a decreased $\texttt{incrementSize}$ and proportionally decrease the size of the initial h-set.
If the algorithm keeps succeeding, the program will try to increase $\texttt{incrementSize}$ and the diameter to speed up the continuation procedure.
The theorem is proved when bounds of $\texttt{currentEpsRange}$ pass the bounds of $\epsilon$ we intended to reach.
Values of $\texttt{incrementSize}$ for several different $\texttt{currentEpsRange}$ can be found in Table~\ref{table:continuation} along with
periods of the periodic orbit and amounts of sections given by $\texttt{pm\_count}$.

\begin{table}[ht]
  \begin{center}
  \scalebox{0.90}{
      \begin{tabular}{ | l | l | l | l |}
        \hline
        \texttt{currentEpsRange} & \texttt{incrementSize} &  \texttt{period} & $\texttt{pm\_count}$  \\ \hline
        $[0.0014933550, 0.001499146]$ & $5.7918161 \times 10^{-6}$ &    [201.35884, 207.17313] & 179 \\ \hline
        $[0.001, 0.001001]$ & $10^{-6}$ &  [283.37351, 292.02862] & 212 \\ \hline
        $[4.9947443, 5.0200138] \times 10^{-4}$ & $2.5269501 \times 10^{-6}$ &  [521.07987, 557.55718] & 301 \\ \hline
        $[1.5057754, 1.5132376] \times 10^{-4}$ & $7.4621539 \times 10^{-7}$ &    [1593.3303, 1846.4787] & 671  \\ \hline
      \end{tabular}
    }\caption{Sample values from the validated continuation proof of Theorem~\ref{thm:main2}. As one can see,
    the period increases significantly as $\epsilon \to 0$, making it necessary to introduce more sections and lengthening the computations. }
    \label{table:continuation}
  \end{center}
\end{table}

\subsubsection{Further continuation}\label{subsec:further}

We have decided to stop the validated continuation at $\epsilon=0.0015$.
Above that value our continuation algorithm encountered
difficulties in its nonrigorous part, and needed many manual readjustments of the continuation parameters.
As we later checked with MATCONT, this seemed not to have been caused by any bifurcation,
so, most likely, it was just a defect of our ad-hoc method of continuing approximations of the periodic orbit
by computation of Poincar\'e maps between sections.
Nonrigorous continuation methods implemented in continuation packages such as MATCONT
are based on approximation of the orbit curve by Legendre polynomials and seem more reliable than our approach.
Such a good nonrigorous approximation with a large number of collocation points
would be enough to have a rigorous part of the continuation based on Poincar\'e maps succeed,
making further continuation only a matter of computation time.
We did not implement it though, as we have decided that we are satisfied with
how wide our $\epsilon$ range is. By Theorem~\ref{thm:main3} we have already reached
the value where the standard interval Newton-Moore method applied to a sequence of Poincar\'e maps succeeds, and we think it is clear
that a proof for higher values of $\epsilon$ will pose no significant theoretical or computational challenges.

\subsection{Proof of Theorem~\ref{thm:main3}}\label{subsec:thm13}
Recall the interval Newton-Moore method for finding zeroes of a smooth map $\mathcal{F}: \rr^{n} \to \rr^{n}$:
\begin{thm}[The interval Newton-Moore method~\cite{Alefeld, Neumaier, Moore}]\label{thm:intervalNewton}
  Let $X = \Pi_{i=1}^{n} [a_{i}, b_{i}]$, $\mathcal{F}: \rr^{n} \to \rr^{n}$ be of class $C^{1}$ and let $x_{0} \in X$.
  Assume the interval enclosure of $D\mathcal{F}(X)$, denoted by $[D \mathcal{F} (X)]$ is invertible. We denote by
  \begin{equation}
    \mathcal{N}(x_{0},X):=-[D \mathcal{F} (X)]^{-1} \mathcal{F} (x_{0}) + x_{0}
  \end{equation}
  the interval Newton operator. Then
  \begin{itemize}
    \item if $\mathcal{N}(x_{0},X) \subset \inter X$, then the map $\mathcal{F}$ has a unique zero $x_{*} \in X$.
      Moreover $x_{*} \in \mathcal{N}(x_{0},X)$.
    \item If $\mathcal{N}(x_{0},X) \cap X = \emptyset$, then $\mathcal{F}(x) \neq 0$ for all $x \in X$.
  \end{itemize}
\end{thm}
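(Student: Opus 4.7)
The plan is to recover everything from a single parametric mean-value representation of $\mathcal{F}$ and then to close with Brouwer's fixed point theorem. First I would fix $x_0 \in X$ and, for each $x \in X$, write
\[
\mathcal{F}(x) - \mathcal{F}(x_0) = M(x)(x - x_0), \qquad M(x) := \int_0^1 D\mathcal{F}\bigl(x_0 + s(x - x_0)\bigr)\, ds.
\]
Since $X$ is a product of intervals, the entire segment from $x_0$ to $x$ lies in $X$, so $M(x) \in [D\mathcal{F}(X)]$. By the standing invertibility hypothesis every matrix in $[D\mathcal{F}(X)]$ is invertible, so in particular $M(x)$ is invertible for all $x \in X$; and whenever $\mathcal{F}(x) = 0$ rearrangement gives
\[
x = x_0 - M(x)^{-1}\mathcal{F}(x_0) \in -[D\mathcal{F}(X)]^{-1}\mathcal{F}(x_0) + x_0 = \mathcal{N}(x_0, X).
\]

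The second bullet is then immediate: if $\mathcal{N}(x_0, X) \cap X = \emptyset$, no $x \in X$ can satisfy $\mathcal{F}(x) = 0$, since otherwise such an $x$ would land in $\mathcal{N}(x_0, X) \cap X$. The localization assertion in the first bullet (that any zero lies in $\mathcal{N}(x_0, X)$) comes from the same computation. For uniqueness under $\mathcal{N}(x_0, X) \subset \inter X$, I would apply the mean-value identity to a pair $x_1, x_2 \in X$ of zeros: this produces $M' \in [D\mathcal{F}(X)]$ with $M'(x_1 - x_2) = 0$, and invertibility of $M'$ forces $x_1 = x_2$.

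The main step is existence, which I would establish by Brouwer's fixed point theorem applied to the continuous self-map
\[
T : X \to \mathbb{R}^n, \qquad T(x) := x - M(x)^{-1}\mathcal{F}(x).
\]
Substituting the mean-value identity collapses $T$ to $T(x) = x_0 - M(x)^{-1}\mathcal{F}(x_0)$, which belongs to $\mathcal{N}(x_0, X) \subset \inter X$; hence $T$ indeed maps the closed box $X$ continuously into itself. A fixed point $x_* \in X$ satisfies $M(x_*)^{-1}\mathcal{F}(x_*) = 0$, and invertibility of $M(x_*)$ yields $\mathcal{F}(x_*) = 0$. The assertion $x_* \in \mathcal{N}(x_0, X)$ then follows from the localization step above.

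The point I expect to be slightly technical, rather than genuinely hard, is checking that $x \mapsto M(x)^{-1}$ is continuous on $X$, which reduces to continuity of $x \mapsto M(x)$ (a consequence of $\mathcal{F} \in C^1$ together with smoothness of the parameter integral) combined with the fact that invertibility of every element of the compact interval matrix $[D\mathcal{F}(X)]$ provides a uniform lower bound on $|\det M(x)|$. Once this is in place, Brouwer's theorem closes the argument, and no topological input beyond it is needed.
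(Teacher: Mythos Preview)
Your argument is correct and is essentially the standard proof of the interval Newton--Moore theorem. Note, however, that the paper does not supply its own proof of this statement: it is quoted with attribution to \cite{Alefeld, Neumaier, Moore} and then applied directly, so there is nothing to compare against on the paper's side.

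A couple of minor remarks on your write-up. First, your use of the hypothesis ``$[D\mathcal{F}(X)]$ is invertible'' tacitly unpacks it as \emph{regularity} of the interval matrix (every real matrix contained in it is nonsingular) together with $[D\mathcal{F}(X)]^{-1}$ denoting an interval enclosure of the set of inverses; this is indeed the intended reading in the cited sources, but it is worth stating explicitly, since the containment $M(x)^{-1}\in[D\mathcal{F}(X)]^{-1}$ is what makes the localization step go through. Second, for continuity of $x\mapsto M(x)^{-1}$ you do not actually need the uniform determinant bound you mention: continuity of $M$ plus continuity of matrix inversion on $\mathrm{GL}(n,\mathbb{R})$ already suffices, and compactness of $X$ takes care of any boundedness issues. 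With these clarifications the proof is complete.
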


We applied the interval Newton-Moore method to a
problem of the form~\eqref{eq:problemform} given by the
sequence of 179 Poincar\'e maps obtained from the last step of the continuation procedure described in Subsection~\ref{subsec:continuation},
i.e. the step, where $\texttt{currentEpsRange}$ contains 0.0015.
Let $B_{\max}(0,r)$ denote an open ball of radius $r$ centered at 0 in maximum norm.
We obtained the following inclusion
\begin{equation}
  \mathcal{N}\left( 0, \overline{B_{\max}\left(0, 10^{-6}\right)} \right) \subset B_{\max}\left(0, 4.7926638 \times 10^{-14}\right),
\end{equation}
which, by Theorem~\ref{thm:intervalNewton}, implies the existence and local uniqueness of the periodic orbit.

\begin{rem}
 We report that we have succeeded with a verified continuation based on the interval Newton-Moore method
 for the whole parameter range of Theorem~\ref{thm:main2}, that is $\epsilon \in [1.5 \times 10^{-4}, 0.0015]$.
 Although we got a little extra information on the local uniqueness of the solution of the problem~\eqref{eq:problemform}, 
 we have decided to discard this result,
 as it was vastly outperformed in terms of computation time by the method of covering relations\footnote{Substituting
 the interval Krawczyk operator for the interval Newton operator did not resolve this issue, i.e. did not allow for greater widths
 in the parameter steps.}.
 It seems that the sequential covering process in the method of covering relations
 benefits more from the strong hyperbolicity than the interval Newton operator, hence allowing to make wider steps in the parameter range for such type of problems.
 However, for ranges of higher values of $\epsilon$ the interval Newton-Moore method
 was only several times slower than the one of covering relations (e.g. $\approx 7$ times in the range $[0.001, 0.0015]$),
 so we decided to state Theorem~\ref{thm:main3} in its current form to show that
 we have achieved a parameter value where the more widespread tool is already adequate to the task.
\end{rem}

\subsection{Technical data and computation times}

All computations were performed on a laptop equipped with Intel Core i7 CPU, 1.80 GHz processor, 4GB RAM
and a Linux operating system with gcc-5.2.0. We used the 568th Subversion revision of the CAPD libraries.
The programs were not parallelized.

Verification of assumptions of Theorem~\ref{thm:main1} took 236 seconds. Over 95\% of the processor time
was taken by verification of isolation for the chains of isolating segments.

Proofs of Theorems~\ref{thm:main2} and~\ref{thm:main3} were executed by the same program.
The validated continuation in Theorem~\ref{thm:main2} was the most time consuming part -- it took 4153 seconds.
Theorem~\ref{thm:main3} is formulated for a single parameter value; the proof here was instantaneous -- it finished within 2 seconds.

We remark that the successful attempt to check the assumptions of Theorem~\ref{thm:main1} also for the range $\epsilon \in [10^{-4}, 1.5 \times 10^{-4}]$ (119 seconds)
saved us a lot of computation time. In theory we could have tried to use a validated continuation approach like in Theorem~\ref{thm:main2}
for this range. We tried it later for a subrange \nopagebreak $\epsilon \in [1.1 \times 10^{-4}, 1.5 \times 10^{-4}]$ (for the whole range 
execution of Newton's method
for the problem~\eqref{eq:problemform} within the nonrigorous part of the continuation algorithm failed due to enormous sizes of matrices to invert) 
-- it took 2571 seconds, that is over 20 times longer.
This indicates that construction of isolating segments around slow manifolds can be a valuable tool for proofs for
``regular'' parameter ranges (i.e. not including the singular perturbation parameter value) in systems with a very large separation of time scales.

\section{Concluding remarks}

We proved the existence of a periodic orbit in the FitzHugh-Nagumo equations
for a range of the small parameter $\epsilon \in (0,0.0015]$.
We also showed that the range is wide enough to reach its upper bound
with standard validated continuation and rigorous $C^{1}$ methods.
We hope that by further development of methods aimed at rigorous computations
many classical results from singular perturbation theory, such as
\begin{itemize}
  \item a proof of existence of the homoclinic orbit in the FitzHugh-Nagumo system,
  \item proofs of existence of periodic and connecting orbits for fast-slow systems with higher-dimensional slow manifolds,
  \item proofs of uniqueness and stability of the waves,
\end{itemize}
should be achievable in such explicit parameter ranges.

\section*{Acknowledgments}

This work was initiated during a visit of both authors at Uppsala University and continued throughout a visit of AC at Rutgers University.
We thank Warwick Tucker and Konstantin Mischaikow for their kind hospitality and many helpful discussions.
We are very grateful to Daniel Wilczak and Tomasz Kapela, the main developers of the CAPD library,
for the time they spend helping us with implementation of our theorems.
We also thank BIRS for hosting the workshop ``Rigorously Verified Computing for Infinite Dimensional Nonlinear Dynamics'' at the Banff Centre,
as it was a great place to exchange ideas and identify topics for future investigation in systems with multiple time scales.

\section*{Note added in proof}

After the first preprint of this article was released,
two other manuscripts on the existence of waves in explicit ranges of $\epsilon$ appeared online.
Matsue proved the existence of the periodic orbit,
the homoclinic orbit and the heteroclinic loop for the FitzHugh-Nagumo system, all without further validated continuation~\cite{Matsue}.
The methods of Matsue are computer-assisted and similar to the ones described in this paper,
though they seem to follow GSPT techniques more closely and put a bigger emphasis on tracking of invariant manifolds,
at the cost of narrower ranges of $\epsilon$.
The second work is a dissertation of AC~\cite{CzechowskiPhD}, which contains the results from this paper on the periodic orbit
and extends them to give a proof of existence of the homoclinic, also without further continuation.


\bibliography{fhn_bib}

\def\cprime{$'$}
\begin{thebibliography}{10}

\bibitem{CAPD}
{CAPD}: Computer assisted proofs in dynamics, a package for rigorous numerics.
\newblock \url{http://capd.ii.uj.edu.pl}.

\bibitem{Alefeld}
G.~Alefeld.
\newblock Inclusion methods for systems of nonlinear equations---the interval
  {N}ewton method and modifications.
\newblock In {\em Topics in validated computations ({O}ldenburg, 1993)},
  volume~5 of {\em Stud. Comput. Math.}, pages 7--26. North-Holland, Amsterdam,
  1994.

\bibitem{Ambrosi}
D.~Ambrosi, G.~Arioli, and H.~Koch.
\newblock A homoclinic solution for excitation waves on a contractile
  substratum.
\newblock {\em SIAM J. Appl. Dyn. Syst.}, 11(4):1533--1542, 2012.

\bibitem{ArioliKoch}
G.~Arioli and H.~Koch.
\newblock Existence and stability of traveling pulse solutions of the
  {F}itz{H}ugh-{N}agumo equation.
\newblock {\em Nonlinear Anal.}, 113:51--70, 2015.

\bibitem{Carpenter}
G.~A. Carpenter.
\newblock A geometric approach to singular perturbation problems with
  applications to nerve impulse equations.
\newblock {\em J. Differential Equations}, 23(3):335--367, 1977.

\bibitem{Champneys}
A.~R. Champneys, V.~Kirk, E.~Knobloch, B.~E. Oldeman, and J.~Sneyd.
\newblock When {S}hil\cprime nikov meets {H}opf in excitable systems.
\newblock {\em SIAM J. Appl. Dyn. Syst.}, 6(4):663--693, 2007.

\bibitem{Easton}
C.~Conley and R.~Easton.
\newblock Isolated invariant sets and isolating blocks.
\newblock {\em Trans. Amer. Math. Soc.}, 158:35--61, 1971.

\bibitem{Conley}
C.~C. Conley.
\newblock On traveling wave solutions of nonlinear diffusion equations.
\newblock In {\em Dynamical systems, theory and applications ({R}encontres,
  {B}attelle {R}es. {I}nst., {S}eattle, {W}ash., 1974)}, pages 498--510.
  Lecture Notes in Phys., Vol. 38. Springer, Berlin, 1975.

\bibitem{Czechowski}
A.~Czechowski.
\newblock home page.
\newblock \url{http://ww2.ii.uj.edu.pl/~czechows}.

\bibitem{CzechowskiPhD}
A.~Czechowski.
\newblock Rigorous numerics for a singular perturbation problem.
\newblock dissertation, submitted, electronic version available
  at~\url{http://ww2.ii.uj.edu.pl/~czechows}, 2015.

\bibitem{Day}
S.~Day, J.-P. Lessard, and K.~Mischaikow.
\newblock Validated continuation for equilibria of {PDE}s.
\newblock {\em SIAM J. Numer. Anal.}, 45(4):1398--1424, 2007.

\bibitem{Fenichel}
N.~Fenichel.
\newblock Geometric singular perturbation theory for ordinary differential
  equations.
\newblock {\em J. Differential Equations}, 31(1):53--98, 1979.

\bibitem{FitzHugh}
R.~FitzHugh.
\newblock Mathematical models of threshold phenomena in the nerve membrane.
\newblock {\em Bull. Math. Biophysics}, 17:257--269, 1955.

\bibitem{Galias2}
Z.~Galias.
\newblock Counting low-period cycles for flows.
\newblock {\em Internat. J. Bifur. Chaos Appl. Sci. Engrg.}, 16(10):2873--2886,
  2006.

\bibitem{Galias}
Z.~Galias and W.~Tucker.
\newblock Numerical study of coexisting attractors for the {H}\'enon map.
\newblock {\em Internat. J. Bifur. Chaos Appl. Sci. Engrg.}, 23(7):1330025, 18,
  2013.

\bibitem{Mischaikow}
T.~Gedeon, H.~Kokubu, K.~Mischaikow, H.~Oka, and J.~F. Reineck.
\newblock The {C}onley index for fast-slow systems. {I}. {O}ne-dimensional slow
  variable.
\newblock {\em J. Dynam. Differential Equations}, 11(3):427--470, 1999.

\bibitem{MATCONT}
W.~Govaerts and Y.~A. Kuznetsov.
\newblock Matcont.
\newblock \url{http://sourceforge.net/projects/matcont}.

\bibitem{GuckenheimerKuehn}
J.~Guckenheimer and C.~Kuehn.
\newblock Homoclinic orbits of the {F}itz{H}ugh-{N}agumo equation: the
  singular-limit.
\newblock {\em Discrete Contin. Dyn. Syst. Ser. S}, 2(4):851--872, 2009.

\bibitem{Hastings2}
S.~Hastings.
\newblock The existence of periodic solutions to {N}agumo's equation.
\newblock {\em Quart. J. Math. Oxford Ser. (2)}, 25:369--378, 1974.

\bibitem{Hastings}
S.~P. Hastings.
\newblock Some mathematical problems from neurobiology.
\newblock {\em Amer. Math. Monthly}, 82(9):881--895, 1975.

\bibitem{Hastings3}
S.~P. Hastings.
\newblock On the existence of homoclinic and periodic orbits for the
  {F}itzhugh-{N}agumo equations.
\newblock {\em Quart. J. Math. Oxford Ser. (2)}, 27(105):123--134, 1976.

\bibitem{Jones}
C.~Jones.
\newblock Stability of the travelling wave solution of the
  {F}itz{H}ugh-{N}agumo system.
\newblock {\em Trans. Amer. Math. Soc.}, 286(2):431--469, 1984.

\bibitem{JonesBook}
C.~Jones.
\newblock Geometric singular perturbation theory.
\newblock In {\em Dynamical systems ({M}ontecatini {T}erme, 1994)}, volume 1609
  of {\em Lecture Notes in Math.}, pages 44--118. Springer, Berlin, 1995.

\bibitem{Kopell}
C.~Jones, N.~Kopell, and R.~Langer.
\newblock Construction of the {F}itz{H}ugh-{N}agumo pulse using differential
  forms.
\newblock In {\em Patterns and dynamics in reactive media ({M}inneapolis, {MN},
  1989)}, volume~37 of {\em IMA Vol. Math. Appl.}, pages 101--115. Springer,
  New York, 1991.

\bibitem{Kapela}
T.~Kapela, D.~Wilczak, and P.~Zgliczy\'nski.
\newblock Rigorous computation of {P}oincar\'e maps.
\newblock preprint, 2015.

\bibitem{Lessard}
J.-P. Lessard.
\newblock Recent advances about the uniqueness of the slowly oscillating
  periodic solutions of {W}right's equation.
\newblock {\em J. Differential Equations}, 248(5):992--1016, 2010.

\bibitem{Maginu}
K.~Maginu.
\newblock Existence and stability of periodic travelling wave solutions to
  {N}agumo's nerve equation.
\newblock {\em J. Math. Biol.}, 10(2):133--153, 1980.

\bibitem{Matsue}
K.~Matsue.
\newblock Rigorous numerics for fast-slow systems with one-dimensional slow
  variable: topological shadowing approach.
\newblock preprint arXiv:1507.01462, 2015.

\bibitem{Moore}
R.~E. Moore.
\newblock {\em Interval analysis}.
\newblock Prentice-Hall Inc., Englewood Cliffs, N.J., 1966.

\bibitem{Nagumo}
J.~Nagumo, S.~Arimoto, and S.~Yoshizawa.
\newblock An active pulse transmission line simulating nerve axon.
\newblock {\em Proc. {IRE}.}, 50:2061--2070, 1962.

\bibitem{Neumaier}
A.~Neumaier.
\newblock {\em Interval methods for systems of equations}, volume~37 of {\em
  Encyclopedia of Mathematics and its Applications}.
\newblock Cambridge University Press, Cambridge, 1990.

\bibitem{Srzednicki}
R.~Srzednicki.
\newblock Periodic and constant solutions via topological principle of
  {W}a\.zewski.
\newblock {\em Univ. Iagel. Acta Math.}, (26):183--190, 1987.

\bibitem{SrzednickiWojcik}
R.~Srzednicki and K.~W{\'o}jcik.
\newblock A geometric method for detecting chaotic dynamics.
\newblock {\em J. Differential Equations}, 135(1):66--82, 1997.

\bibitem{constraint}
E.~Tsang.
\newblock {\em Foundations of {C}onstraint {S}atisfaction}.
\newblock Computation in Cognitive Science Series. Academic Press, 1993.

\bibitem{WilczakZgliczynski2}
D.~Wilczak and P.~Zgliczy{\'n}ski.
\newblock Heteroclinic connections between periodic orbits in planar restricted
  circular three-body problem---a computer assisted proof.
\newblock {\em Comm. Math. Phys.}, 234(1):37--75, 2003.

\bibitem{WilczakZgliczynski}
D.~Wilczak and P.~Zgliczy{\'n}ski.
\newblock Period doubling in the {R}\"ossler system---a computer assisted
  proof.
\newblock {\em Found. Comput. Math.}, 9(5):611--649, 2009.

\bibitem{Yanagida}
E.~Yanagida.
\newblock Stability of fast travelling pulse solutions of the
  {F}itz{H}ugh-{N}agumo equations.
\newblock {\em J. Math. Biol.}, 22(1):81--104, 1985.

\bibitem{Zgliczynski}
P.~Zgliczy\'nski.
\newblock {$C\sp 1$} {L}ohner algorithm.
\newblock {\em Found. Comput. Math.}, 2(4):429--465, 2002.

\bibitem{GideaZgliczynski}
P.~Zgliczy{\'n}ski and M.~Gidea.
\newblock Covering relations for multidimensional dynamical systems.
\newblock {\em J. Differential Equations}, 202(1):32--58, 2004.

\end{thebibliography}
\bibliographystyle{abbrv}



\end{document}